

\documentclass[12pt]{amsart}
\usepackage{amsthm,amsmath,a4,amssymb,verbatim,enumerate}


\vfuzz2pt 
\hfuzz2pt 
\newtheorem{thm}{Theorem}[section]
\newtheorem{cor}[thm]{Corollary}
\newtheorem{lem}[thm]{Lemma}
\newtheorem{prop}[thm]{Proposition}
\theoremstyle{definition}
\newtheorem{defn}[thm]{Definition}
\theoremstyle{remark}
\newtheorem{rem}[thm]{Remark}
\newtheorem{ex}[thm]{Example}
\numberwithin{equation}{section}

\newcommand{\Hom}{\textnormal{Hom}}

\newcommand{\Kill}{\textnormal{Kill}}

\newcommand{\Ker}{\textnormal{Ker}}

\newcommand{\eps}{\varepsilon}

\newcommand{\g}{\mathfrak{g}}

\newcommand{\mk}{\mathfrak}
\newcommand{\bta}{\bar{\eta}}
\newcommand{\Sym}{\mathrm{Sym}}
\newcommand{\Alt}{\mathrm{Alt}}
\newcommand{\HH}{\mathrm{HH}}

\newcommand{\HC}{\mathrm{HC}}
\newcommand{\coker}{\mathrm{coker}}

\newcommand{\ot}{\otimes}
\newcommand{\we}{\wedge}
\newcommand{\cc}{\circledcirc}

\newcommand{\mkd}{\mathfrak{d}}
\newcommand{\mka}{\mathfrak{a}}
\newcommand{\mkv}{\mathfrak{v}}
\newcommand{\mkh}{\mathfrak{h}}

\newcommand{\mkl}{\mathfrak{l}}
\newcommand{\mks}{\mathfrak{s}}

\newcommand{\bd}{\partial}
\newcommand{\la}{\langle}
\newcommand{\ra}{\rangle}


\begin{document}

\address{CNRS -- D\'epartement de Math\'ematiques, Universit\'e Paris-Sud, 91405 Orsay, France}

\email{yves.cornulier@math.u-psud.fr}
\subjclass[2010]{Primary 17B55; Secondary 17B30, 17B56, 17B70, 19C09, 15A63}

\title{On the Koszul map of Lie algebras}
\author{Yves Cornulier}%
\date{May 21, 2014}





\begin{abstract}
We motivate and study the reduced Koszul map, relating the invariant bilinear maps on a Lie algebra and the third homology. We show that it is concentrated in degree 0 for any grading in a torsion-free abelian group, and in particular it vanishes whenever the Lie algebra admits a positive grading. We also provide an example of a 12-dimensional nilpotent Lie algebra whose reduced Koszul map does not vanish. In an appendix, we reinterpret the results of Neeb and Wagemann about the second homology of current Lie algebras, which are closely related to the reduced Koszul map.
\end{abstract}
\maketitle

\section{Introduction}

Let $\g$ be a Lie algebra. Although we are mostly interested in the case when $\g$ is a Lie algebra over a field of characteristic zero, we assume by default that $\g$ is a Lie algebra over an arbitrary commutative ring $R$ (all commutative rings in the paper are assumed to be associative with unit).

All tensors below are over the ground ring $R$ (unless explicitly mentioned). Recall that the homology of $\g$ is defined as the homology of the Chevalley-Eilenberg complex $\Lambda^*\g$; in particular $H_i(\g)$ is a subquotient of $\Lambda^i\g$.

We consider the symmetric square $S^2\g=\g\cc\g$. We define the Killing module $\Kill(\g)$ as the cokernel of the map from $\g^{\ot 3}$ to $S^2\g$ mapping $x\ot y\ot z$ to $[x,y]\cc z-x\cc [y,z]$. In other words, this is the module of co-invariants of the $\g$-module $S^2\g$. In particular, for every $R$-module $M$, $\Hom_{R\textnormal{-mod}}(\Kill(\g),M)$ is canonically isomorphic to the $R$-module of invariant symmetric $R$-bilinear forms $\g\times\g\to M$; for instance, when $R$ is a field, the dual $\Kill(\g)^*=(\Sym^2\g)^\g$ is canonically isomorphic to the space of invariant symmetric bilinear forms on $\g$.

Let us consider the linear map $\check{\eta}:\g^{\ot 3}\to S^2\g$, mapping $x\ot y\ot z$ to $x\cc [y,z]$; it is alternating in the last 2 variables. If we consider the composite map $\g^{\ot 3}\to\Kill(\g)$, it is by definition invariant under cyclic permutations, and therefore factors to an $R$-module homomorphism $\eta:\Lambda^3\g\to\Kill(\g)$, called the {\bf Koszul map} (or homomorphism); it is also known as Cartan-Koszul map. A straightforward verification (see Lemma \ref{vanb3}) shows that $\eta$ vanishes on 3-cycles.
Furthermore, we can restrict $\eta$ to the submodule of 3-cycles and thus obtain a factor map \[\bta:H_3(\g)\to\Kill(\g),\]
called the {\bf reduced Koszul map}.

Note that over a field and for finite-dimensional Lie algebras, the adjoint of the Koszul map is the map $J:(\Sym^2\g)^\g\to\Alt^3\g$ mapping an invariant symmetric bilinear form $B$ on $\g$ to the alternating trilinear form $J_B(x,y,z)=B([x,y],z)$; the adjoint of the reduced Koszul map is the resulting map $(\Sym^2\g)^\g\to H^3(\g)$.

\medskip

Let us mention a few important situations in which the Koszul map arises. (This is given as a motivation; the reader can skip this part and go to the statement of results on page~\pageref{pag}.)

\smallskip

\noindent{\bf $\bullet$ Semisimple Lie algebras.} Koszul \cite[\S 11]{Kos} considered the above map $J$ for an arbitrary Lie algebra $\g$ over a field of characteristic $\neq 2$ as a map $(\Sym^2\g)^\g\to(\Alt^3\g)^\g$, showing that it is injective if and only if $H^1(\g)=\{0\}$ and bijective if and only if $H^1(\g)=H^2(\g)=\{0\}$ (the injectivity assertion was already used by Chevalley-Eilenberg in the proof of \cite[Theorem 21.1]{ChE}). He defines $\g$ to be reductive (let us write ``{\em Koszul-reductive}") if it is finite-dimensional and $\Alt^*\g$ is absolutely completely reducible as a $\g$-module (this is the usual notion when the field has characteristic zero, but also includes some positive characteristic cases: for instance any Chevalley absolutely simple Lie algebra is Koszul-reductive in large enough characteristic). A straightforward fact is that, for arbitrary Lie algebras, invariant cochains are cocycles, i.e.\ $(\Alt^*\g)^\g\subset Z^*(\g)$; thus there is a canonical graded homomorphism $(\Alt^*\g)^\g\to H^*(\g)$, which was proved to be an isomorphism when $\g$ is Koszul-reductive (by Chevalley-Eilenberg \cite[Theorem 19.1]{ChE} in characteristic zero and Koszul in general, by the same method \cite[Lemme~9.1]{Kos}). On the other hand, for an arbitrary Lie algebra, if $H^1(\g)=\{0\}$, it is immediate that every invariant 2-cocycle is zero (see \cite[Theorem 21.1]{ChE}); thus if $\g$ is Koszul-reductive and perfect (that is, $H^1(\g)=\{0\}$, or equivalently since $\g$ is Koszul-reductive, $\g$ has trivial center) it also satisfies $H^2(\g)=\{0\}$. Combining these results, it follows that if $\g$ is perfect and Koszul-reductive, then the dual of the reduced Koszul map $J:(\Sym^2\g)^\g\to H^3(\g)$ is an isomorphism. This applies in particular when $K$ has characteristic zero and $\g$ is semisimple, yielding a description of the third (co)homology group of $\g$: indeed, the dimension of $(\Sym^2\g)^\g$ is then easily computable: it is the number of simple factors of $\g\ot\hat{K}$, where $\hat{K}$ is an algebraic closure of $K$. 

\smallskip

\noindent{\bf $\bullet$ The Pirashvili long exact sequences.}

Let $\g$ be a Lie algebra over a field of characteristic $\neq 2$. In \cite{Pir}, Pirashvili provides two (closely related) exact sequences:
\[\dots \to H_1^{\textnormal{rel}}(\g)\to HL_3(\g)\to H_3(\g)\stackrel{\bar{\eta}}\to H_0^{\textnormal{rel}}(\g)\to HL_2(\g)\to H_2(\g)\to 0;\]
\[\dots \to HR_1(\g)\to H_2(\g,\g)\stackrel{\bar{p_3}}\to H_3(\g)\stackrel{\bar{\eta}}\to HR_0(\g)\stackrel{\bar{s}}\to H_1(\g,\g)\stackrel{\bar{p_2}}\to H_2(\g)\to 0.\]
where both $H_0^{\textnormal{rel}}(\g)$ and $HR_0(\g)$ are canonically isomorphic to $\Kill(\g)$. Let us partially indicate the meaning of the terms: $HL_*(\g)$ is the Leibniz homology of $\g$ (discovered by Loday), which is the homology of the Leibniz complex, a complex lifting the usual Chevalley-Eilenberg complex but using tensor powers instead of the exterior algebra \cite[\S 10.6]{Lod}. The map $\bar{s}$ is induced by the natural map $s:S^2\g\to\g^{\ot 2}$ defined by $s(x\cc y)=x\ot y+y\ot x$, and the map $\bar{p_i}$ is induced by the natural projection $\g^{\ot i}\to\Lambda^i\g$.

The second sequence, in particular, shows that $\bar{\eta}=0$ if and only if $\bar{p_3}$ is surjective, if and only if $\bar{s}$ is injective, in which case we get an exact sequence
\[0\to \Kill(\g)\to H_1(\g,\g)\to H_2(\g)\to 0.\]
At the opposite, $\bar{\eta}$ is surjective if and only if $\bar{s}=0$, in which case $\bar{p_2}$ is an isomorphism from $H_1(\g,\g)$ onto $H_2(\g)$.

In practice, since $\bar{\eta}$ is often convenient to handle, a good understanding of it (such as vanishing results as we provide below) provides information on the other maps involved, e.g.\ the maps $\bar{s}:\Kill(\g)\to H_1(\g,\g)$ or the projection $H_2(\g,\g)\to H_3(\g)$; this is used, for instance, in \cite{HPL}.

\smallskip

\noindent{\bf $\bullet$ 2-homology of current Lie algebras}

Let $\g$ be a Lie algebra over a field $K$ of characteristic $\neq 2$, and $A$ a commutative, associative, unital $K$-algebra. The Lie algebra $A\ot\g$ (where $\ot$ means $\ot_K$), viewed as Lie algebra over $K$, is called current Lie algebra. For every $i$ we have a natural $K$-linear map $H_i(A\ot \g)\to A\ot H_i(\g)$; for $i\le 1$ it is an isomorphism and for $i\le 2$ it is surjective; still, for $i=2$ it is not necessarily injective, and the determination of $H_2(A\ot\g)$ is interesting. Let us assume, to simplify, that $H_1(\g)=H_2(\g)=\{0\}$. 
Let us write $\bar{\eta}_\g$ to emphasize $\g$. The results of Neeb and Wagemann can then be used to yield the following exact sequence:
\[0 \to \HC_1(A)\ot\mathrm{Im}(\bar{\eta}_\g)\to H_2(A\ot\g) \to \HH_1(A)\ot\coker(\bar{\eta}_\g)\to 0.\]
(See Appendix \ref{nwd}, where convenient interpretations of the Neeb-Wagemann results are provided.)

Here $\HH_1(A)$ is the first Hochschild homology group and $\HC_1$ the first cyclic homology group. Instead of defining them here, we mention two important fundamental examples of $A$, assuming that $K$ has characteristic zero, namely the polynomial ring and the ring of Laurent polynomials:
\[\HC_1(K[t])\simeq\{0\},\;\HC_1(K[t^{\pm 1}])\simeq K,\quad \HH_1(K[t])\simeq K[t],\; \HH_1(K[t^{\pm 1}])\simeq K[t^{\pm 1}].\]
%
%
%

For instance, when $A=K[t]$ (resp.\ $K[t^{\pm 1}]$), $H_2(A\ot\g)$ is zero (resp.\ finite-dimensional) if and only if the reduced Koszul map $\bar{\eta}$ is surjective, and infinite-dimensional otherwise.

The above exact sequence degenerates in two extreme cases: the first is when the reduced Koszul map $\bar{\eta}$ is surjective: then we get
\[H_2(A\ot\g)\simeq \HC_1(A)\ot\Kill(\g).\]
This is actually the best-known case: indeed it holds for semisimple Lie algebras when $K$ has characteristic zero, in which case the above isomorphism is notably due to Bloch \cite{Blo} and Kassel-Loday \cite{KasL}. 

The other extreme case is when $\bar{\eta}=0$: then we get 
\[H_2(A\ot\g)\simeq \HH_1(A)\ot\Kill(\g).\]

It turns out that the vanishing condition of the reduced Koszul map is very common and the semisimple case is quite peculiar. For finite-dimensional $\g$ with $K$ of characteristic zero, the conditions of vanishing of $H_1(\g)$ and of $\bar{\eta}$, imply that $\g$ is both perfect and solvable and thus only hold for $\g=\{0\}$. Still, these conditions can be made compatible because all the given exact sequences hold when $\g$ is endowed with a grading (in an arbitrary Lie algebra). Then we can fix a weight $\alpha$, so that under the assumption of vanishing of $H_1(\g)_\alpha$ and $H_2(\g)_\alpha$, the previous exact sequence holds in degree $\alpha$. This can be applied in many cases including finite-dimensional examples, for instance the example of Appendix \ref{appcheck}.

\begin{rem}
The Neeb-Wagemann computation of the second (co)homology of current Lie algebras contradicts a description provided 14 years earlier by Zusmanovich \cite{Zus} but this was not noticed until now. In Appendix \ref{appcheck}, we make an explicit computation in a particular case $\mk{l}$ (some perfect but non-semisimple 6-dimensional complex Lie algebra) and $A=\mathbf{C}[t]$, in which $H_2(\mathbf{C}[t]\otimes\mk{l})$ does not vanish, in accordance with the results in \cite{NW} but confirms that the argument given in \cite{Zus}, extending the previously known semisimple case, is mistaken.
\end{rem}

\bigskip

\label{pag}In this paper, we are interested in the reduced Koszul map for more general Lie algebras, especially nilpotent Lie algebras. For instance it is obviously zero for abelian Lie algebras $\mka$ (although $H_3(\mka)$ and $\Kill(\mka)$ are nonzero in general). Actually it remains zero in a considerably greater generality. Magnin \cite{Mag} investigated this phenomenon for some classes of Lie algebras over fields, calling a Lie algebra {\em I-null} if the Koszul map is zero (this just means that all invariant symmetric bilinear forms on $\g$ factor through $S^2(\g/[\g,\g])$, and {\em I-exact} if the reduced Koszul map is zero (obviously I-null implies I-exact); actually in Magnin's setting, $I$ denotes the above map $B\mapsto J_B$ and to say that the reduced Koszul map is zero means that $J_B$ in an exact 3-cocycle for every $B\in\Sym(\g)^\g$. Magnin checked, for instance, that all complex nilpotent Lie algebras up to dimension 7 have a vanishing reduced Koszul map, and also exhibits interesting classes of I-null Lie nilpotent algebras (for instance, the Lie algebra of $n\times n$ strictly upper-triangular matrices for any $n\ge 0$).

If $\g$ is a Lie algebra graded in an abelian group $A$, the tensor powers $\g^{\ot k}$ are naturally graded in $A$, namely by
\[(\g^{\ot k})_{\alpha}=\sum_{\beta_1+\dots+\beta_k=\alpha}\g_{\beta_1}\otimes \dots\otimes\g_{\beta_k},\]
this grading is compatible with taking exterior or symmetric powers, and is preserved by the boundary maps and the Koszul map, and thus the reduced Koszul map is naturally a graded $R$-module homomorphism. 

\begin{thm}\label{vanish}
Assume that the ground ring $R$ is a commutative $\mathbf{Q}$-algebra (e.g.\ a field of characteristic zero).
Let $\g$ be a Lie algebra graded in a torsion-free abelian group $A$. Then the reduced Koszul map is zero in every nonzero degree $\alpha$.
\end{thm}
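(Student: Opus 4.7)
The plan is to exploit the grading to derive an explicit identity in $S^2\g$ expressing $\phi(\alpha)\,\eta(\omega)$ as a twisted cyclic sum, and then combine this identity with the defining relation of $\Kill(\g)$ to conclude that $\eta(\omega)\in\g\cdot S^2\g$ for every $3$-cycle $\omega$ of nonzero degree $\alpha$. Since $A$ is torsion-free abelian and $\alpha\neq 0$, pick a group homomorphism $\phi\colon A\to\mathbf{Q}$ with $\phi(\alpha)\neq 0$. Because $R$ is a $\mathbf{Q}$-algebra, the rule $D(x):=\phi(|x|)\,x$ on homogeneous $x$ defines an $R$-linear derivation $D$ of $\g$. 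Form the semidirect product $\tilde\g=\g\rtimes RD$; inside $\tilde\g$, $D$ becomes the inner derivation $\mathrm{ad}(D)$, and write $\tilde\bd$ for the Chevalley--Eilenberg boundary of $\tilde\g$.

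For $\omega=\sum_i a_i\we b_i\we c_i\in Z_3(\g)_\alpha$, the formula for $\tilde\bd$ gives $\tilde\bd(D\we\omega)=\phi(\alpha)\,\omega-D\we\bd\omega=\phi(\alpha)\,\omega$, so applying $\eta_{\tilde\g}$ yields $\phi(\alpha)\,\eta(\omega)$ on one side. On the other side, each $\tilde\bd(D\we a_i\we b_i\we c_i)$ splits into three terms in $\Lambda^3\g$ (assembling into $\phi(\alpha)\,a_i\we b_i\we c_i$) and three terms in $D\we\Lambda^2\g$ (assembling into $-D\we\bd(a_i\we b_i\we c_i)$). Applying $\eta_{\tilde\g}$ to the latter, the contributions of $D$-valued type collect into $-D\cc\bigl([[a_i,b_i],c_i]-[[a_i,c_i],b_i]+[[b_i,c_i],a_i]\bigr)=0$ by the Jacobi identity, so the result remains in $S^2\g$. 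Collecting the $S^2\g$-valued contributions yields the identity
\[\phi(\alpha)\,\eta(\omega)=2\sum_i\bigl(\phi(|a_i|)\,a_i\cc[b_i,c_i]+\phi(|b_i|)\,b_i\cc[c_i,a_i]+\phi(|c_i|)\,c_i\cc[a_i,b_i]\bigr)\qquad\text{in }S^2\g.\]

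Set $P_i=a_i\cc[b_i,c_i]$, $Q_i=b_i\cc[c_i,a_i]$, $R_i=c_i\cc[a_i,b_i]$, so that $\eta(\omega)=\sum_i(P_i+Q_i+R_i)$. The defining relation $[x,y]\cc z\equiv x\cc[y,z]$ of $\Kill(\g)$ gives $P_i\equiv Q_i\equiv R_i$ modulo $\g\cdot S^2\g$. Using $\phi(|a_i|)+\phi(|b_i|)+\phi(|c_i|)=\phi(\alpha)$ for each $i$, reducing the identity modulo $\g\cdot S^2\g$ gives $3\phi(\alpha)\sum_iP_i\equiv 2\phi(\alpha)\sum_iP_i$, whence $\phi(\alpha)\sum_iP_i\equiv 0$; since $\phi(\alpha)\in\mathbf{Q}^\times$ is invertible in $R$, we obtain $\sum_iP_i\equiv 0$, and hence $\bar\eta(\omega)=\eta(\omega)\equiv 3\sum_iP_i\equiv 0$ in $\Kill(\g)_\alpha$. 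The main technical point is the Jacobi-type cancellation underlying the key identity: without it one would only deduce $\eta(\omega)\in\tilde\g\cdot S^2\tilde\g$, which in degree $\alpha\neq 0$ is vacuous because the natural map $\Kill(\g)_\alpha\to\Kill(\tilde\g)_\alpha$ is identically zero.
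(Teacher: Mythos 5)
Your proof is correct. Two notational points first: with the paper's sign convention for $\bd$ one gets $\tilde{\bd}_4(D\we a\we b\we c)=-\phi(\alpha)\,a\we b\we c-D\we\bd_3(a\we b\we c)$ (opposite overall sign to what you wrote, harmless for the conclusion), and the map you apply to $\tilde{\bd}_4(D\we\omega)$ must be the cyclic sum $\check{\eta}'(x\we y\we z)=x\cc [y,z]+y\cc [z,x]+z\cc [x,y]$, which is genuinely $S^2$-valued and well defined on $\Lambda^3\tilde{\g}$, rather than the $\Kill$-valued $\eta$ --- your later definition of $P_i,Q_i,R_i$ shows this is what you intend. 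Granting that, I checked that $\check{\eta}'(\tilde{\bd}_4(D\we a\we b\we c))=-2\bigl(\phi(|a|)\,a\cc[b,c]+\phi(|b|)\,b\cc[c,a]+\phi(|c|)\,c\cc[a,b]\bigr)$ after the Jacobi cancellation of the $D\cc(\cdot)$ terms, so your identity in $S^2\g$ and the concluding reduction modulo $\mathrm{Im}(T)$ are valid. The paper reaches the same conclusion (Theorem \ref{thmkos}) by proving the operator identity $3\pi p\Phi\hat{\bd_3}=3\Psi\eta$ on all of $\Lambda^3\g$, where $\Phi$ and $\Psi$ multiply $\g_i\ot\g_j$ by $\phi(j-i)$ and $\phi(i+j)$; this forces it to embed $\Lambda^2\g$ and $S^2\g$ into $\g^{\ot 2}$ (so $2$ must be invertible), to introduce an auxiliary operator $U$ producing an error term $(T'+T'')U$, and to divide by $3\phi(\alpha)$ at the end. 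Your route is a genuine repackaging of the same computation: $\Phi$ and $\Psi$ are exactly $1\ot D-D\ot 1$ and $D\ot 1+1\ot D$ for your grading derivation $D$, and adjoining $D$ as an inner derivation lets you read the whole identity off the single relation $\tilde{\bd}_4(D\we\omega)=\pm\phi(\alpha)\,\omega$ together with the well-definedness of $\check{\eta}'$. What this buys: no division by $2$ or $3$ anywhere (the step $3\phi(\alpha)X\equiv 2\phi(\alpha)X$ uses only subtraction), so the argument needs only $\phi(\alpha)$ invertible in $R$; and your closing remark correctly isolates why the Jacobi cancellation is the crux --- without it the conclusion would live in $\Kill(\tilde{\g})_\alpha$, where it is vacuous.
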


(We refer to Section \ref{vagra} for a statement applying to fields of positive characteristic.)

Although here we emphasize the study of the vanishing of the reduced Koszul map, another point of view consists in considering Lie algebras endowed with an invariant symmetric bilinear form $B$: 

\begin{cor}\label{cor3form}
Let $\g$ be a Lie algebra over a field of characteristic zero, with an invariant symmetric bilinear form $B$. Suppose that $\g$ admits a grading in a torsion-free abelian group for which $B$ is homogeneous of nonzero degree. Then the 3-form $\eta^*(B)$ is exact.
\end{cor}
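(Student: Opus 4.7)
The plan is to reduce the statement to a direct application of Theorem \ref{vanish}. First, recall that $\eta^*(B)$ is the 3-form $J_B(x,y,z)=B([x,y],z)$, which is a 3-cocycle because $B$ is invariant (so that $J_B$ is an invariant cochain, hence automatically a cocycle, as noted in the paper). Exactness of $J_B$ is equivalent to vanishing of its class in $H^3(\g;K)$. Since $K$ is a field, $\Hom_K(-,K)$ is exact on $K$-vector spaces, so the universal coefficient isomorphism gives $H^3(\g;K)\cong\Hom_K(H_3(\g),K)$, and it therefore suffices to show that the induced functional on $H_3(\g)$ vanishes.

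Next, observe that by construction $J_B=B\circ\eta$ as maps $\Lambda^3\g\to K$, where $B$ is viewed here as the linear form $\Kill(\g)\to K$ induced by the invariant bilinear form. Restricting to 3-cycles and passing to the quotient, the induced functional on $H_3(\g)$ is exactly $B\circ\bta$, and the task reduces to proving $B\circ\bta=0$. This is where the grading enters: the grading of $\g$ by the torsion-free abelian group $A$ induces compatible gradings on $H_3(\g)$ and on $\Kill(\g)$, and the reduced Koszul map $\bta$ is homogeneous of degree 0. The hypothesis that $B$ is homogeneous of degree $\alpha\neq 0$ means exactly that $B$ vanishes on $\Kill(\g)_\beta$ for every $\beta\neq\alpha$.

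On any component $H_3(\g)_\beta$ with $\beta\neq\alpha$, the composition $B\circ\bta$ is therefore zero for trivial reasons. In the remaining degree $\beta=\alpha$, since $\alpha\neq 0$ and $K$ is a field of characteristic zero (hence a $\mathbf{Q}$-algebra), Theorem \ref{vanish} yields $\bta|_{H_3(\g)_\alpha}=0$, and once again $B\circ\bta$ vanishes there. Summing over homogeneous components gives $B\circ\bta=0$, whence $\eta^*(B)$ is exact. There is no substantive obstacle: once Theorem \ref{vanish} is in hand, the corollary is essentially a dualization, the only point needing care being the universal coefficient identification, which is automatic over a field.
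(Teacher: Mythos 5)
Your proof is correct and is essentially the argument the paper intends (the corollary is stated without a written proof, precisely as a dualization of Theorem \ref{vanish} via the identification of $\eta^*(B)$ with $B\circ\eta$ and the grading-compatibility of $\bta$). The only cosmetic point is the sign convention for the degree of $B$ (the paper's convention makes $B$ supported on $\Kill(\g)_{-\alpha}$ rather than $\Kill(\g)_{\alpha}$), which is immaterial since all that matters is that the supporting degree is nonzero.
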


An elementary particular case (already observed by Neeb and Wagemann) is the case of the coadjoint semidirect product $\g\ltimes\g^*$, where $\g$ is an arbitrary Lie algebra over a field (of characteristic $\neq 2,3$), and $\g^*$ its coadjoint representation (viewed as an abelian ideal): indeed the canonical bilinear form has degree -1 for the natural grading in which $\g$ has degree 0 and the ideal $\g^*$ has degree~1. 

Let us provide some particular cases of Theorem \ref{vanish}.

\begin{cor}\label{gradnn}
Assume that $R$ is a commutative $\mathbf{Q}$-algebra. If $\g$ is graded in the set of non-negative integers $\mathbf{N}$, then the reduced Koszul map of $\g$ coincides with the reduced Koszul map of $\g_0$.
\end{cor}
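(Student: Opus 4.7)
The plan is to deduce the statement from Theorem \ref{vanish}, viewing the $\mathbf{N}$-grading of $\g$ as a grading in the torsion-free abelian group $\mathbf{Z}$. By that theorem, $\bar{\eta}_\g$ vanishes in every nonzero degree, so the only content of $\bar{\eta}_\g$ is its degree-zero component. It then suffices to identify this degree-zero component with the reduced Koszul map of $\g_0$.

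The essential observation is that an $\mathbf{N}$-grading has no negative degrees, so the equation $\beta_1+\dots+\beta_k=0$ with $\beta_i\in\mathbf{N}$ forces $\beta_i=0$ for all $i$. Consequently, for each $k\geq 1$,
\[(\g^{\otimes k})_0=\g_0^{\otimes k},\qquad (\Lambda^k\g)_0=\Lambda^k\g_0,\qquad (S^2\g)_0=S^2\g_0.\]
Since the Chevalley-Eilenberg differential and the map $\g^{\otimes 3}\to S^2\g$ defining $\Kill$ are graded and given by the same formulas for $\g$ and $\g_0$, restricting to degree $0$ recovers precisely the corresponding complex and cokernel for $\g_0$. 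Thus $H_3(\g)_0=H_3(\g_0)$ and $\Kill(\g)_0=\Kill(\g_0)$, and the degree-zero component of $\bar{\eta}_\g$ is, under these identifications, exactly $\bar{\eta}_{\g_0}$.

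Equivalently, one may phrase this through the split surjection $\pi:\g\twoheadrightarrow\g_0$ (killing the ideal $\bigoplus_{n>0}\g_n$) and its section $\iota:\g_0\hookrightarrow\g$: the maps $\pi_*$ and $\iota_*$ realise the identifications above on $H_3$ and on $\Kill$, so the corollary asserts that $\bar{\eta}_\g$ factors as $\iota_*\circ\bar{\eta}_{\g_0}\circ\pi_*$. Combining the two points yields the result. The hard work is entirely hidden in Theorem \ref{vanish}; the only remaining step is the tautological observation that in an $\mathbf{N}$-grading, degree $0$ is contributed solely by $\g_0$, so there is no genuine obstacle.
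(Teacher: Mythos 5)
Your proof is correct and is exactly the argument the paper intends (the corollary is stated as a "particular case" of Theorem \ref{vanish} with no separate proof given): view the $\mathbf{N}$-grading as a $\mathbf{Z}$-grading, apply Theorem \ref{vanish} to kill all nonzero degrees, and observe that since nonnegative integers summing to zero must all vanish, the degree-zero parts of $\Lambda^k\g$, $Z_3$, $B_3$ and $\Kill$ are exactly those of $\g_0$, so the degree-zero component of $\bar{\eta}_\g$ is $\bar{\eta}_{\g_0}$.
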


Note that this is not true when $\g$ is graded in $\mathbf{Z}$: indeed, the simple Lie algebra $\mk{sl}_2$ has a grading in $\{-1,0,1\}$, has a nonzero reduced Koszul map, but $\g_0$ is 1-dimensional abelian so obviously has a zero reduced Koszul map.

\begin{cor}\label{nstar}
Under the same assumption on $R$, if $\g$ is graded in the set of positive integers $\mathbf{N}^*$, then the reduced Koszul map of $\g$ is zero.
\end{cor}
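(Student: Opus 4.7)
The plan is to deduce this immediately from one of the two results just stated. The cleanest route is via Corollary \ref{gradnn}: a grading of $\g$ in $\mathbf{N}^*$ is in particular an $\mathbf{N}$-grading for which the degree-zero component $\g_0$ is trivial. Corollary \ref{gradnn} then identifies the reduced Koszul map of $\g$ with the reduced Koszul map of $\g_0=\{0\}$, which is tautologically zero.

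Alternatively, one can bypass Corollary \ref{gradnn} and invoke Theorem \ref{vanish} directly, since $\mathbf{N}^*\subset\mathbf{Z}$ and $\mathbf{Z}$ is a torsion-free abelian group. Here the point to check is that no degree $0$ piece appears in either the source or target of $\bar\eta$. Since $\g_\alpha=0$ for $\alpha\le 0$, the naturally induced grading on $\g^{\otimes 3}$ (and hence on the subquotient $\Lambda^3\g\supset H_3(\g)$) is concentrated in degrees $\ge 3$, while the grading induced on $S^2\g$ and thus on its quotient $\Kill(\g)$ is concentrated in degrees $\ge 2$. In particular, both $H_3(\g)$ and $\Kill(\g)$ are supported in strictly positive, hence nonzero, degrees. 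Theorem \ref{vanish} then forces $\bar\eta$ to vanish in every such degree, so $\bar\eta=0$.

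Neither approach involves any real obstacle: the corollary is a specialization of the preceding results, and the only thing to verify is the (tautological) observation that an $\mathbf{N}^*$-grading is an $\mathbf{N}$-grading with vanishing degree-zero component, or equivalently a $\mathbf{Z}$-grading whose support avoids $0$ in all the graded modules in sight.
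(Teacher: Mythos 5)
Your proposal is correct, and your second route is essentially the paper's own argument: the paper deduces Corollary \ref{nstar} (in the form of Corollary \ref{nstar2}) from Theorem \ref{thmkos} by observing that for an $\mathbf{N}^*$-grading, $0$ is not a sum of two weights, so $(S^2\g)_0=0$ and hence the degree-$0$ part of $\Kill(\g)$ — the only degree not killed by the theorem — is trivial. Your first route via Corollary \ref{gradnn} with $\g_0=\{0\}$ is an equally valid, minor repackaging of the same fact.
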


Note that when the set of weights (i.e., those $\alpha$ such that $\g_\alpha\neq\{0\}$) is finite (e.g., $\g$ is finite-dimensional over a field), then having a grading on $\mathbf{N}^*$ implies that $\g$ is nilpotent. For instance, Corollary \ref{nstar} provides:

\begin{cor}\label{cor2nilp}
Suppose that the ground ring is a field of characteristic zero. If $\g$ is a 2-nilpotent Lie algebra (that is $[\g,[\g,\g]]=\{0\}$), then the reduced Koszul map of $\g$ is zero.
\end{cor}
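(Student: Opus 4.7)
The plan is to deduce this corollary directly from Corollary \ref{nstar} by equipping $\g$ with an explicit grading in $\mathbf{N}^*$. Since the ground ring is a field, we can choose a vector subspace $V\subseteq\g$ that is a linear complement of $[\g,\g]$, so that $\g=V\oplus[\g,\g]$ as a vector space.

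I would then define $\g_1=V$ and $\g_2=[\g,\g]$ (with $\g_n=\{0\}$ for $n\ge 3$), and check that this is a Lie algebra grading in $\mathbf{N}^*$. The bracket $[\g_1,\g_1]=[V,V]$ lies in $[\g,\g]=\g_2$, so it is concentrated in degree $2=1+1$. The brackets $[\g_1,\g_2]\subseteq[\g,[\g,\g]]$ and $[\g_2,\g_2]\subseteq[[\g,\g],[\g,\g]]$ both vanish by the 2-nilpotency hypothesis, and hence (trivially) land in the correct graded components. Thus all grading axioms are satisfied.

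Applying Corollary \ref{nstar} to this grading gives immediately that the reduced Koszul map of $\g$ is zero, which is the desired conclusion. There is no real obstacle here beyond exhibiting the grading; the only input beyond Corollary \ref{nstar} is the existence of a vector space complement of $[\g,\g]$, which is automatic over a field.
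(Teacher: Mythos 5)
Your proof is correct and is essentially the paper's own argument: the paper likewise observes that a 2-nilpotent Lie algebra admits a grading in $\{1,2\}$ (with $\g_1$ a complement of $[\g,\g]$ and $\g_2=[\g,\g]$) and then invokes Corollary \ref{nstar}. The only addition in the paper is the remark that the statement actually holds over any field of characteristic $\neq 3$ via Theorem \ref{reki4}, but that goes beyond what the stated corollary requires.
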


Indeed, such a Lie algebra admits a grading in $\{1,2\}$. Nevertheless, the statement of Corollary \ref{cor2nilp} is shown to hold over an arbitrary field of characteristic $\neq 3$ (as a consequence of Theorem \ref{reki4}). Counterexamples when the ground ring is a field of characteristic 3 are provided in Proposition \ref{char3nonz}; counterexamples when the ground ring is a $\mathbf{Q}$-algebra but has nonzero nilpotent elements are provided in Proposition \ref{nonredu}.

Actually, the class of nilpotent Lie algebras with a grading on $\mathbf{N}^*$ is considerably larger. It includes, in particular, the {\em Carnot-graded Lie algebras}, namely those Lie algebras over a field, graded in $\mathbf{N}^*$ such that $[\g_1,\g_i]=\g_{i+1}$ for all $i\ge 1$. A Lie algebra admitting such a grading is called {\em Carnot}. For instance, if $R$ is a field, then every 2-nilpotent Lie algebra over $R$ is Carnot.

In turn, the class of Lie algebras admitting a grading in $\mathbf{N}^*$ is much larger than the class of Carnot Lie algebras. For instance, over a field of characteristic zero, every nilpotent Lie algebra of dimension $\le 6$ admits a grading in $\mathbf{N}^*$, while there are exactly two non-isomorphic five-dimensional non-Carnot nilpotent complex Lie algebras ($L_{5,5}$ and $L_{5,6}$ in \cite{graaf}).

Let us also provide one application in a case which is not graded, but closely related: let $\mk{v}$ be an $R$-module graded in $\mathbf{Z}$. Define its right completion to be the $R$-module $\overline{\mk{v}}$ of sequences $\sum_{n\in\mathbf{Z}}v_n$ with $v_n\in\mk{v}_n$ and $v_{-n}=0$ for $n$ large enough. If $\mkv=\g$ is a Lie algebra and the grading is a Lie algebra grading, then $\overline{\g}$ inherits the structure of a Lie algebra in the natural way.

The following is not obtained as a corollary of Theorem \ref{vanish}, but by running the same proof.

\begin{cor}\label{rcomgr}
Suppose that the ground ring is a $\mathbf{Q}$-algebra and that $\g$ is graded in $\mathbf{Z}$, with right completion $\overline{\g}$. Then the reduced Koszul map of $\overline{\g}$ is concentrated in degree 0, in the sense that for every 3-cycle $c\in Z_3(\overline{\g})$ written as $\sum_{n\in\mathbf{Z}} c_n$ with $c_n\in Z_3(\g)_n$, we have $\eta(c)=\eta(c_0)$. In particular, if $\g$ is graded in $\mathbf{N}^*$, then the reduced Koszul map of $\overline{\g}$ is zero, and if $\g$ is graded in $\mathbf{N}$, then the reduced Koszul map of $\overline{\g}$ coincides with that of $\g_0$.
\end{cor}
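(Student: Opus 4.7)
The statement is not a literal corollary of Theorem~\ref{vanish} because the decomposition $c = \sum_n c_n$ is in general an infinite formal sum, so the equality $\eta(c) = \eta(c_0)$ is to be interpreted in the right completion $\overline{\Kill(\g)}$ rather than in $\Kill(\g)$ itself. The approach is to redo the proof of Theorem~\ref{vanish} in the completed setting.

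First I would set up the bookkeeping: for every $\mathbf{Z}$-graded $R$-module $V = \bigoplus_n V_n$, let $\overline{V}$ denote its right completion. The key finiteness is that for $x,y,z \in \overline{\g}$ whose supports are bounded below in degree, only finitely many triples $(a,b,c)$ with $a+b+c = n$ contribute to the degree-$n$ components of $x\we y\we z$ or of $x\cc[y,z]$. This induces natural termwise-continuous maps $\Lambda^3 \overline{\g} \to \overline{\Lambda^3\g}$ and $S^2\overline{\g} \to \overline{S^2\g}$ that intertwine the Chevalley--Eilenberg differential, the auxiliary map $\check{\eta}$, and the passage to Killing coinvariants. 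In particular, a 3-cycle $c \in Z_3(\overline{\g})$ decomposes, in the completion, as a bounded-below formal sum $c = \sum_n c_n$, and the condition $dc = 0$ forces $c_n \in Z_3(\g)_n$ for each $n$.

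Next I would apply the proof of Theorem~\ref{vanish} to each $c_n$ with $n \neq 0$. That argument, which uses the Euler derivation acting by $n$ on degree-$n$ elements and exploits the fact that $R$ is a $\mathbf{Q}$-algebra to invert $n$, produces an explicit witness in the degree-$n$ piece of the Killing submodule certifying $\eta(c_n) = 0 \in \Kill(\g)_n$. Applying this degree by degree and summing in $\overline{\Kill(\g)}$ gives $\eta(c) = \eta(c_0)$, only the $n=0$ term surviving. The two consequences are then immediate: if $\g$ is $\mathbf{N}^*$-graded, then $(\Lambda^3\g)_0 = 0$, so $c_0 = 0$ and $\eta(c) = 0$; if $\g$ is $\mathbf{N}$-graded, then $(\Lambda^3\g)_0 = \Lambda^3\g_0$ and $\Kill(\g)_0 = \Kill(\g_0)$, so $\eta(c_0)$ equals the reduced Koszul map of $\g_0$ evaluated on $c_0$.

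\textbf{Main obstacle.} The delicate point is not the vanishing of each individual $\eta(c_n)$ (which is Theorem~\ref{vanish} verbatim) but the verification that the Koszul map of $\overline{\g}$, computed intrinsically on the finite expression representing $c$ in $\Lambda^3\overline{\g}$, actually coincides with the formal sum $\sum_n \eta(c_n)$ under the identifications with right completions. This amounts to checking that the comparison maps between the completions of $\g^{\otimes k}$, $\Lambda^k\g$, $S^k\g$ and $\Kill(\g)$ commute with the bracket, the Chevalley--Eilenberg differential, and $\check{\eta}$; each such commutation is forced by the finiteness of the number of homogeneous contributions in any fixed degree, but needs to be tracked systematically before one is entitled to say ``run the same proof.''
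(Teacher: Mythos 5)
Your general plan --- rerun the graded argument in a completed setting after installing comparison maps between $\Lambda^k\overline{\g}$, $S^2\overline{\g}$ and the right completions of $\Lambda^k\g$, $S^2\g$ --- is indeed the paper's plan. But the step you actually lean on, namely applying Theorem \ref{vanish} to each homogeneous piece $c_n$ ($n\neq 0$) separately and then ``summing in $\overline{\Kill(\g)}$'', does not prove the statement. The Koszul map of $\overline{\g}$ takes values in $\Kill(\overline{\g})=S^2\overline{\g}/\mathrm{Im}(T_{\overline{\g}})$, not in the right completion $\overline{\Kill(\g)}$. There is a natural map $\Kill(\overline{\g})\to\overline{\Kill(\g)}$, but nothing forces it to be injective: for each $n\neq 0$ the vanishing of $\eta(c_n)$ is certified by a \emph{finite} sum of relators $x\cc [y,z]-y\cc [z,x]$ in degree $n$, and the resulting infinite formal sum of witnesses need not be a finite $R$-linear combination of relators $T_{\overline{\g}}(x\we y\ot z)$ with $x,y,z\in\overline{\g}$. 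So your argument only shows that $\eta(c)-\eta(c_0)$ dies after pushing forward to $\overline{\Kill(\g)}$, which is strictly weaker than the asserted identity in $\Kill(\overline{\g})$. This is exactly the difficulty the paper flags: Corollary \ref{rcomgr} is introduced as ``not obtained as a corollary of Theorem \ref{vanish}, but by running the same proof'', the author states after his proof that he does not know whether the corollary follows directly from the theorem, and Remark \ref{etaprod} records the closely related open problem of whether vanishing of the reduced Koszul map passes to infinite products or projective limits. Your ``main obstacle'' paragraph senses that something must be checked, but misidentifies it as a routine commutation of comparison maps with the structure maps (which is indeed forced by degreewise finiteness); the real issue is the injectivity above, which is not.

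What the paper does instead is to extend the \emph{operators} appearing in the proof of Theorem \ref{thmkos} --- $\Phi$, $\Psi$, $U$, $T'$, $T''$ --- to the modules $\overline{\g^{\ot k}}$ of bounded-below formal sums, and to establish the single identity $3\pi p\Phi\hat{\bd_3}=3\Psi\eta$ in that setting. The witness certifying the vanishing of all nonzero-degree contributions simultaneously is then the one element $(T'+T'')U(\tilde c)$ obtained by applying globally defined graded operators to a finite lift $\tilde c$ of $c$, and one checks directly that $\pi p$ kills it in $\Kill(\overline{\g})$. This uniformity of the witness across degrees is precisely what replaces the unjustified interchange of an infinite sum with the quotient by $\mathrm{Im}(T_{\overline{\g}})$ in your proposal. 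To repair your write-up you would need either to prove the injectivity of $\Kill(\overline{\g})\to\overline{\Kill(\g)}$ (not known, and doubtful in general) or to switch to the operator-level identity.
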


Right completions of Lie algebras graded in $\mathbf{N}^*$ occur in many places in the literature, as particular but common instances of pro-nilpotent Lie algebras.

On the other hand, say over a field of characteristic zero, there exist finite-dimensional nilpotent Lie algebras with no grading in $\mathbf{N}^*$. For instance, {\em characteristically nilpotent} Lie algebras, namely Lie algebras in which every derivation is nilpotent, have no nontrivial grading in $\mathbf{Z}$. Indeed if $\g$ has a grading in $\mathbf{Z}$, then the operator of multiplication by $i$ on $\g_i$ is a derivation, and is nilpotent only if $\g=\g_0$. (Such Lie algebras exist in dimension $\ge 7$.)
However, case-by-case computations show that small-dimensional known characteristically nilpotent Lie algebras have a zero reduced Koszul map, and Fialowski, Magnin and Mandal \cite{FMM} asked whether there indeed exists a (complex) nilpotent Lie algebra with a zero reduced Koszul map. It took us some significant effort to find an example.

\begin{thm}
Let $K$ be any field. There exist a 12-dimensional 7-nilpotent Lie algebra over $K$ whose reduced Koszul map is nonzero.
\end{thm}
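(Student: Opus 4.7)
The plan is to exhibit an explicit Lie algebra satisfying the conclusion, rather than to find one by general structural methods. The earlier vanishing results rule out any $\g$ admitting a positive grading in a torsion-free abelian group, and the computations of Magnin and of Fialowski--Magnin--Mandal show that no nilpotent Lie algebra of small dimension currently known has this property, so a successful example is necessarily somewhat intricate. I would work over $\mathbf{Z}$, fixing a basis $e_1,\dots,e_{12}$ of a free $\mathbf{Z}$-module and writing down integer structure constants $[e_i,e_j]=\sum_k c_{ij}^k e_k$, so that every nonvanishing statement persists after base change to an arbitrary field $K$; the signs and scalings must be chosen so that the critical integer pairing below comes out to $\pm 1$, avoiding any characteristic-dependent cancellation.

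Once the candidate $\g$ is written down, the proof reduces to four finite checks. First, I would verify the Jacobi identity on triples $i<j<k$ and compute the lower central series to confirm that $\g$ is $7$-step nilpotent (a well-chosen numbering of the basis makes this almost a matter of reading off the brackets). Second, I would solve the linear system $B([x,y],z)+B(y,[x,z])=0$ in the unknown entries $B(e_i,e_j)$ to identify an explicit invariant symmetric bilinear form $B$ on $\g$. Third, I would write down a $3$-chain $c\in\Lambda^3\g$, as an integer combination of basis vectors $e_i\we e_j\we e_k$, and verify by a direct linear-algebra computation that $\bd c=0$ in $\Lambda^2\g$, so that $c\in Z_3(\g)$. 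Fourth, I would evaluate the pairing $\la J_B,c\ra=\sum B([e_{i_1},e_{i_2}],e_{i_3})$ induced by $B$ on the summands of $c$, and check that it equals a nonzero integer; this shows $\eta(c)\neq 0$ in $\Kill(\g)$, and hence $\bta\neq 0$.

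The main obstacle is producing the Lie algebra $\g$ and the pair $(B,c)$ in the first place. The vanishing theorems above rule out every $\g$ carrying a positive grading compatible with $B$, so in particular no Carnot-type construction can work; nor can $\g$ be a right completion of an $\mathbf{N}^*$-graded Lie algebra, nor be covered by the $2$-nilpotent class of Corollary~\ref{cor2nilp}. The candidate must therefore be sufficiently "twisted" to destroy every positive grading that sees an invariant form $B$ with nonzero weight, while simultaneously retaining enough symmetry to support an invariant symmetric form whose Cartan $3$-form $J_B$ is closed but not exact. Once a suitable candidate is in hand the remaining verifications are mechanical linear algebra over $\mathbf{Z}$; locating the candidate is the crux, and the author's remark that "it took significant effort to find an example" indicates that this is precisely where the work lies.
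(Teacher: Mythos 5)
Your proposal correctly identifies the logical shape that any proof of this theorem must take: exhibit an explicit $\g$, an invariant symmetric form $B$, and a $3$-cycle $c$ with $B(\eta(c))\neq 0$, all of which are finite verifications once the data are written down. But the proposal never produces the data. The entire mathematical content of this theorem \emph{is} the example --- as you yourself note, ``locating the candidate is the crux'' --- and deferring it leaves the proof with no content beyond a restatement of the definitions of $Z_3$, $\eta$ and $\Kill$. A referee would not accept ``one then writes down suitable integer structure constants'' for an existence statement whose whole difficulty is that no such structure constants were known; your own survey of the obstructions (no positive grading, not Carnot, not $2$-nilpotent, not a right completion of an $\mathbf{N}^*$-graded algebra) makes clear that the search space is severely constrained, which is exactly why the example cannot be waved into existence.

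For comparison, the paper does not search blindly for structure constants: it builds the example by the \emph{double extension} machinery of \S\ref{rqla}. One starts from the $7$-dimensional regular quadratic nilpotent Lie algebra $\mk{w}(5)$, takes its orthogonal direct product with a $3$-dimensional abelian quadratic space, chooses an explicit nilpotent skew-symmetric derivation $D$ of this $10$-dimensional algebra, and forms the double extension, yielding a $12$-dimensional $7$-nilpotent \emph{regular quadratic} Lie algebra. This has two advantages over your scheme: the Jacobi identity and the invariance of $B$ are automatic from the double extension construction (so your second check, solving the linear system for $B$, is unnecessary --- the nondegenerate invariant form comes for free), and the quadratic structure guides the choice of $D$ so as to destroy all positive gradings. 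The remaining work is then exactly your third and fourth checks: an explicit $8$-term $3$-chain $c$ is shown to be a cycle by a $7\times 8$ matrix kernel computation, and $J_B(c)=-2\neq 0$. Note also that the paper's computation gives $-2$, not $\pm 1$, so the construction as written requires $\operatorname{char}K\neq 2$; your suggestion of normalizing the pairing to $\pm 1$ over $\mathbf{Z}$ is a reasonable refinement for covering characteristic $2$, but again it is only a suggestion until an actual example realizing it is displayed.
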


Actually, we check in Section \ref{small} that for nilpotent Lie algebras over a field of characteristic zero, up to dimension $9$ the reduced Koszul map is always zero (by reduction to regular quadratic Lie algebras and using in this case, using the classification in \cite{Kat}, the existence of a grading in positive integers and Corollary \ref{nstar}); the same seems to also hold in dimension 10 (we have not completely checked).

Concerning the nilpotency length, we do not know if the reduced Koszul map is zero for all complex 3-nilpotent Lie algebras; actually we check (Corollary \ref{meta3ni}) that this is equivalent to whether it vanishes for all complex metabelian Lie algebras (metabelian means 2-solvable, i.e.\ the derived subalgebra is abelian).

In Section \ref{solvable}, we address the solvable case:

\begin{thm}
Let $K$ be any field. There exists a 9-dimensional 3-solvable (central-by-metabelian) Lie algebra with nonzero reduced Koszul map; if $K$ has characteristic zero this is the smallest possible dimension for such an example.
\end{thm}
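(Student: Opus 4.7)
The theorem has two parts: the existence, valid over any field, and the minimality, which is a statement about characteristic zero. I would treat them separately.

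For existence, my plan is to construct an explicit 9-dimensional Lie algebra $\g$ by specifying a basis and structure constants. I would verify that $[[\g,\g],[\g,\g]]$ lies in the center, so that $\g$ is central-by-metabelian and in particular 3-solvable; then I would exhibit an invariant symmetric bilinear form $B$ on $\g$ together with a 3-cycle $c \in Z_3(\g)$ on which $\eta$ pairs non-trivially---equivalently, check that the 3-cocycle $J_B(x,y,z) = B([x,y],z)$ is not a coboundary. A natural template is a central extension $0 \to Kz \to \g \to \bar\g \to 0$ where $\bar\g$ is an 8-dimensional metabelian Lie algebra and $z$ appears in the image of $[\g',\g']$; one then designs $B$ to pair $z$ nontrivially with a chosen element so that $J_B$ detects a class in $H^3(\g)$. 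Carrying this out reduces to writing down the bracket table, identifying $Z(\g)$ and $[\g',\g']$, and performing a finite Chevalley--Eilenberg computation in degrees 3 and 4. Working in this generality (rather than over a fixed field) requires that the chosen structure constants involve only integers in $\{0,\pm 1\}$, so that the calculation is characteristic-independent.

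For the minimality, the task is to show $\bta = 0$ for every 3-solvable Lie algebra $\g$ of dimension at most 8 over a field of characteristic zero. The plan is to leverage the gradings toolkit. By Corollary \ref{cor2nilp}, the 2-nilpotent case is already handled; more generally, by Corollary \ref{nstar}, vanishing holds whenever $\g$ admits a grading in $\mathbf{N}^*$. In characteristic zero, one can always choose a maximal ad-diagonalizable subalgebra $\mk{t} \subseteq \g$ and grade $\g$ in the free abelian character lattice of $\mk{t}$; Theorem \ref{vanish} then forces $\bta = 0$ in every nonzero degree, so the only remaining issue is the contribution of $H_3(\g^{\mk{t}})$. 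The central-by-metabelian hypothesis together with the bound $\dim \g \le 8$ restricts $\g^{\mk{t}}$ enough to complete the analysis by a case enumeration, handling each residual class either by a further grading in a torsion-free group or by direct computation of $H_3$ and $\Kill$. A useful auxiliary input is the small-dimension nilpotent vanishing from Section \ref{small}, applied to the nilradical.

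The main obstacle is the minimality part, since it combines a dimension-bounded classification of central-by-metabelian Lie algebras with the verification that every case not covered by a grading criterion still has vanishing $\bta$. By contrast, the existence part is essentially an explicit construction plus a finite cohomological calculation.
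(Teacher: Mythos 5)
Your existence plan follows the paper's route: the paper writes down an explicit 9-dimensional regular quadratic Lie algebra with structure constants in $\{0,\pm 1\}$ (its second derived subalgebra is a central plane, so it is center-by-metabelian), exhibits the 3-cycle $c=x\we y\we z-u^1\we u^{-1}\we x-v^{1}\we v^{-1}\we y$, and evaluates the invariant scalar product to get $J(c)=-1\neq 0$ in every characteristic. Since you do not actually produce a bracket table, this half of your argument remains a programme rather than a proof, but the strategy is the right one.

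The minimality argument contains a genuine gap. You grade $\g$ by the character lattice of a maximal torus $\mk t$ and claim that, once Theorem \ref{vanish} kills all nonzero degrees, ``the only remaining issue is the contribution of $H_3(\g^{\mk t})$.'' This is false: for a grading in $\mathbf{Z}^k$ with weights of both signs, the degree-$0$ component of $\Lambda^3\g$ is strictly larger than $\Lambda^3\g_0$; it contains chains such as $x_\alpha\we x_{-\alpha}\we h$ with $\alpha\neq 0$. The identification of the degree-$0$ reduced Koszul map with that of $\g_0$ is exactly Corollary \ref{gradnn}, which holds only for gradings in $\mathbf{N}$, and the paper warns immediately after that corollary (with $\mk{sl}_2$ graded in $\{-1,0,1\}$) that it fails for $\mathbf{Z}$-gradings. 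Worse, the paper's own 9-dimensional example refutes your reduction: it carries a Cartan grading in $\mathbf{Z}^2$ whose degree-$0$ part is the 5-dimensional nilpotent algebra $\mk w(3)$ (zero reduced Koszul map by Theorem \ref{everyni}), yet the nonvanishing is detected precisely by a degree-$(0,0)$ cycle built from the weight vectors $u^{\pm1},v^{\pm1}$. What the paper actually does in Theorem \ref{soldi} is: reduce to regular quadratic $\g$ via Lemma \ref{somq} (this pairs $\g_\alpha$ with $\g_{-\alpha}$, forces the Cartan subalgebra to have even codimension, and forces $\g_0$ to be quadrable), and then, by a case analysis on the weight structure made possible by the dimension bound, build an auxiliary grading in $\{0,1,2\}$ or $\{0,1,2,3\}$ with \emph{abelian} degree-$0$ component, to which Corollary \ref{gradnn} legitimately applies. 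Your plan is missing this passage from the Cartan ($\mathbf{Z}^k$-)grading to an $\mathbf{N}$-grading, which is where essentially all of the work lies.
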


Theorem \ref{vanish} also has applications in the description of the second (co)homology of current Lie algebras addressed above, see Corollary \ref{iterat} (and the preceding remarks) and Corollary \ref{h10eta0}.

\medskip

\noindent {\bf Acknowledgements.} I am grateful to Louis Magnin for interesting discussions and for a computer checking of the construction of Section \ref{12dim}. I also thank Karl-Hermann Neeb and Friedrich Wagemann for useful discussions and comments; many thanks are due to the referee for pointing out many typos. 

\section{Preliminaries}

\subsection{Lie algebras, tensor products}

Let us work over an arbitrary commutative ring $R$.

Recall that Lie algebra is an $R$-module endowed with a bilinear map $[\cdot,\cdot]:\g\times\g\to\g$ satisfying $[x,x]=0$ for all $x$, and $\mathrm{jac}(x,y,z)=0$ for all $x$, where
\[\mathrm{jac}(x,y,z)=[x,[y,z]]+[y,[z,x]]+[z,[x,y]].\]
We denote by $\we$ the exterior product and by $\cc$ the symmetric product.

If $\g$ is a Lie algebra, we define its {\em lower central series} by
\[\g^{(1)}=\g,\quad \g^{(i+1)}=[\g,\g^{(i)}].\]
It is a Lie algebra filtration, in the sense that $[\g^{(i)},\g^{(j)}]\subset \g^{(i+j)}$ for all $i,j$, and moreover the above inclusion is always an equality. We also define the {\em derived series} by $D^0\g=\g$, $D^{i+1}\g=[D^i\g,D^i\g]$. By an obvious induction, we have $D^i\g\subset \g^{(2^i)}$ for all $i\ge 0$.

Recall that $\g$ is {\em $k$-nilpotent} if $\g^{(k+1)}=\{0\}$ and {\em nilpotent} if is $k$-nilpotent for some $k$; the smallest $k$ for which this holds is called nilpotency length (or nilindex) of $\g$: it is 0 by definition for the zero Lie algebra and 1 for nonzero abelian Lie algebras.

\subsection{Killing module}

If $\g$ is Lie algebra, we define its {\em Killing module} $\Kill(\g)$ the space of co-invariants of the $\g$-module $S^2\g$. Thus $\Kill(\g)=\coker(T)$, where 
\[T:\Lambda^2\g\ot\g\to S^2\g,\quad ((x\we y)\ot z)\mapsto x\cc [y,z]-y\cc [z,x]\]
We also write $x\equiv y$, for $x,y\in S^2\g$, to mean that $x$ and $y$ represent the same element of $\Kill(\g)$, i.e.\ $x-y\in\mathrm{Im}(T)$.

The terminology is more a tribute to Wilhelm Killing (1847-1923) than a direct reference to the Killing form: the latter only makes sense for finite-dimensional Lie algebras over field, and for instance is always zero in the nilpotent case, while $\Kill(\g)$ can be interestingly complicated; it is also denoted by $B(\g)$ by \cite{Had,Zus}, but we avoid this notation in order to avoid confusion, especially with the group of boundaries. Actually, $\Kill(\g)$ is a ``predual" to the more familiar concept of the space of symmetric bilinear forms: for instance when $R=K$ is a field, the dual $\Kill(\g)^*$ is then the space of invariant symmetric bilinear forms on $\g$. Any homomorphism between Lie algebras $f:\g\to\mkh$ functorially induces an $R$-module homomorphism $\Kill(f):\Kill(\g)\to\Kill(\mkh)$, which is surjective as soon as $f$ is surjective.

\subsection{Koszul map}

We consider the {\em raw Koszul map} as the map $\check{\eta}:\g\ot\Lambda^2\g\to S^2\g$ defined by $\check{\eta}(x\ot (y\we z))=x\cc [y,z]$, and the composite map into $\Kill(\g)$ clearly factors to a map $\eta:\Lambda^3\g\to\Kill(\g)$, called the {\em Koszul map}. 

The Koszul map is especially known at a dual level: assuming that $R$ is a field, the adjoint $\eta^*$ maps an invariant symmetric bilinear form $B$ on $\g$ to the trilinear alternating form $J_B=\eta^*(B)$ defined by $J_B(x,y,z)=B(x,[y,z])$.

A well-known observation is the following:

\begin{lem}\label{vanb3}
The Koszul map $\eta$ vanishes on $B_3(\g)$.
\end{lem}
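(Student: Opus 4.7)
The plan is to show directly that $\eta\circ d_4 = 0$, where $d_4:\Lambda^4\g\to\Lambda^3\g$ is the Chevalley--Eilenberg differential, since $B_3(\g)=\mathrm{Im}(d_4)$. It suffices to check this on decomposable elements $a\wedge b\wedge c\wedge d$.

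First I would expand the boundary:
\[
d_4(a\wedge b\wedge c\wedge d)= -[a,b]\wedge c\wedge d+[a,c]\wedge b\wedge d-[a,d]\wedge b\wedge c-[b,c]\wedge a\wedge d+[b,d]\wedge a\wedge c-[c,d]\wedge a\wedge b,
\]
then apply $\eta$ using the formula $\eta(x\wedge y\wedge z)=x\cc[y,z]$ (which is well-defined modulo the relations of $\Kill(\g)$ thanks to cyclic invariance of $\check{\eta}$ followed by the $\g$-coinvariants quotient). Using that $\cc$ is symmetric, the six resulting terms collapse in pairs and give
\[
\eta\bigl(d_4(a\wedge b\wedge c\wedge d)\bigr) = -2[a,b]\cc[c,d]+2[a,c]\cc[b,d]-2[a,d]\cc[b,c].
\]

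Next I would use the defining relation of $\Kill(\g)$, namely $[x,y]\cc z\equiv x\cc[y,z]$, to move the inner bracket into the second slot in each summand. Setting $x=a$ in each case gives
\[
[a,b]\cc[c,d]\equiv a\cc[b,[c,d]],\quad [a,c]\cc[b,d]\equiv a\cc[c,[b,d]],\quad [a,d]\cc[b,c]\equiv a\cc[d,[b,c]].
\]
Substituting and using the antisymmetry of the bracket, the whole expression becomes
\[
-2\,a\cc\bigl([b,[c,d]]+[c,[d,b]]+[d,[b,c]]\bigr),
\]
which vanishes by the Jacobi identity applied to $b,c,d$. Hence $\eta(d_4(a\wedge b\wedge c\wedge d))=0$ in $\Kill(\g)$, as required.

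There is essentially no obstacle: the argument is a direct manipulation in the quotient module, combining the $\g$-invariance relation defining $\Kill(\g)$ with the Jacobi identity. One small point to be careful about is that the factor $2$ appears without needing to be divided out, so the argument goes through over any commutative ground ring $R$, with no assumption on the characteristic (which is appropriate since the lemma is stated in full generality).
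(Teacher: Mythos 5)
Your proof is correct and follows essentially the same route as the paper's: both expand $\partial_4$ on a decomposable $4$-vector, collapse the six terms to $-2\bigl([a,b]\cc[c,d]+[a,c]\cc[d,b]+[a,d]\cc[b,c]\bigr)$, and kill this using the defining relation of $\Kill(\g)$ together with the Jacobi identity, with the factor $2$ multiplying something that is already zero so that no invertibility of $2$ is needed. The only (cosmetic) difference is that the paper writes the correction term explicitly as an element of $\mathrm{Im}(T)$ plus $2\,t\cc\mathrm{jac}(x,y,z)$, isolating the last variable, whereas you work modulo $\mathrm{Im}(T)$ and isolate the first variable.
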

\begin{proof}
Recall that
\[\partial_n(x_1\we\dots\we x_n)=\sum_{i<j}(-1)^{i+j}[x_i,x_j]\we x_1\we\dots\we \hat{x_i}\we\dots\we\hat{x_j}\we\dots \we x_n;\] define a map $\check{\partial_n}:\g^{\ot n}\to\g^{\ot n-1}$ by the same formula, replacing signs $\we$ by $\ot$. Then we obtain
\[\check{\eta}\check{\bd_4}(x\ot y\ot z\ot t)=-2([x,y]\cc [z,t]+[x,z]\cc [t,y]+[x,t]\cc [y,z])\qquad\]
\[\qquad= 2\Big(t\cc \mathrm{jac}(x,y,z) + T\big(([x,y]\we t)\ot z+([z,x]\we t)\ot y+([y,z]\we t)\ot x\big)\Big).\]
Thus $\eta\bd_4=0$.
\end{proof}

At a dual level, again assuming that the ground ring is a field, Lemma \ref{vanb3} says that the image of $\eta^*$ consists of 3-cocycles. More generally, the lemma is equivalent to the well-known statement that for every $R$-module $\mk{m}$ (viewed as a trivial $\g$-module), the dual map $\Hom(\Kill(\g),\mk{m})\to\Hom(\Lambda^3\g,\mk{m})$ has an image consisting of 3-cocycles.

\begin{defn}
The {\em reduced Koszul map} $\bar{\eta}$ is the map $H_3(\g)\to\Kill(\g)$ induced by the restriction of $\eta$ to $Z_3(\g)$.
\end{defn}

\begin{rem}
In this paper, we are often mainly interested in the image of $\bar{\eta}$. In particular, to determine this image, it is enough to consider the restriction of $\eta$ to $Z_3(\g)$ and we do not have to bother with 3-boundaries. Still, of course Lemma \ref{vanb3} can be useful when we have information about 3-homology (e.g.\ vanishing).
\end{rem}

\subsection{Regular quadratic Lie algebras}\label{rqla}

Here, the ground ring is a field $K$.

\begin{defn}
A {\em regular quadratic} Lie algebra is a Lie algebra over $K$ endowed with an invariant nondegenerate symmetric bilinear form.

A Lie algebra is {\em quadrable} if it can be endowed with a structure of regular quadratic Lie algebra.
\end{defn}	

As far as I know, these notions were first introduced by Tsou and Walker \cite{TW}.
If $\mathrm{char}(K)\neq 2$, a Lie algebra $\g$ is quadrable if and only if the adjoint and coadjoint representation of $\g$ are isomorphic as $K\g$-modules \cite[Theorem 1.4]{MR93}. (This can look at first sight as a triviality, but there is a little issue in proving this condition implies that $\g$ is quadrable: formally, it only implies the existence of a nondegenerate bilinear form, not necessarily symmetric. However, an easy argument shows that $[\g,\g]$ is contained in the kernel of every invariant alternating bilinear form and this helps to conclude.)

\begin{rem}Regular quadratic Lie algebras have many other names in the literature, including: quadratic Lie algebras, (symmetric) self-dual Lie algebras, metrized/metric Lie algebras, orthogonal Lie algebras. Also, they sometimes denote Lie algebras admitting (at least) a non-degenerate invariant quadratic form, or Lie algebras endowed with such a form, or ambiguously both (still, Tsou-Walker and Astrakhantsev initially used two distinct words, namely ``metrized/\penalty0metric" and ``metrizable"). Also, almost all these terminologies are used for other meanings (quadratic Lie algebra for quadratically presented Lie algebra; metric Lie algebra for a Lie algebra endowed with a scalar product which is not necessarily invariant, orthogonal Lie algebra for the usual $\mk{so}_n$). Using ``regular quadratic", we follow the terminology from Favre-Santharoubane \cite{FS}.
\end{rem}

We recall the notion of double extension. 
Let $\mkh$ be a regular quadratic Lie algebra and let $D$ be a skew-symmetric self-derivation of $\mkh$. The {\bf double extension} $\g$ of $\mkh$ by $D$ \cite{FS,MR85} is defined as follows: as a space, it is $Ke\oplus \mkh\oplus Kf$, where $e,f$ are symbols representing generators of the lines $Ke$ and $Kf$. The scalar product is defined so that it extends the original scalar product on $\mkh$, the vectors $e,f$ are isotropic and orthogonal to $\mkh$, and $\la e,f\ra=1$. The new Lie bracket $[\cdot,\cdot]$ is defined so that $f$ is central, and, denoting $[x,y]'$ the original bracket on $\mkh$ 
\[[e,x]=Dx,\quad [x,y]=[x,y]'+\la Dx,y\ra f,\quad \forall x,y\in\mkh.\]
Note that if $\mkh$ is solvable then so is $\g$, and if $\mkh$ is nilpotent, then $\g$ is nilpotent if and only if $D$ is a nilpotent endomorphism. See \S\ref{small} for basic examples.

Actually, any finite-dimensional solvable regular quadratic Lie algebra can be obtained from an orthogonal space (viewed as an abelian regular quadratic Lie algebra) using iterated double extensions; this provides a classification scheme in small dimensions.

\subsection{Koszul map and regular quadratic Lie algebras}

The following lemma is easy but useful in the study of the Koszul map.

\begin{lem}\label{somq}
Let $\g$ be a Lie algebra over $K$. Suppose that the reduced Koszul map of $\g$ does not vanish. Then some quotient of $\g$ is quadrable and also has a nonzero reduced Koszul map.
\end{lem}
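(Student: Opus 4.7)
The plan is to exploit the identification between invariant symmetric bilinear forms on $\g$ and the dual of $\Kill(\g)$, and then to pass to the quotient by the radical of a well-chosen such form.

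First, since $\bar{\eta}\ne 0$ and we are working over a field $K$, by duality there exist a 3-cycle $c\in Z_3(\g)$ and a linear functional $\varphi\in\Kill(\g)^*$ with $\varphi(\eta(c))\ne 0$. By the identification recalled in \S\ref{rqla}, such a $\varphi$ is exactly an invariant symmetric bilinear form $B:\g\times\g\to K$, and $\varphi(\eta(c))=J_B(c)$.

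Second, consider the radical $N=\{x\in\g:B(x,y)=0\ \forall y\in\g\}$. The $\g$-invariance of $B$ ensures that $N$ is an ideal: for $x\in N$ and $a,y\in\g$ one has $B([a,x],y)=-B(x,[a,y])=0$. Therefore $B$ descends to a nondegenerate invariant symmetric bilinear form $\bar B$ on the quotient Lie algebra $\mkh:=\g/N$, which is thus quadrable.

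Third, I would invoke the functoriality of $\Kill$ and of $H_3$. The projection $\pi:\g\to\mkh$ induces a chain map on the Chevalley--Eilenberg complexes, hence a map $\pi_*:H_3(\g)\to H_3(\mkh)$; it also induces $\Kill(\pi):\Kill(\g)\to\Kill(\mkh)$, and by naturality of the Koszul construction the reduced Koszul maps are intertwined:
\[\bar\eta_{\mkh}\circ\pi_*=\Kill(\pi)\circ\bar\eta_{\g}.\]
Since $B=\bar B\circ(\pi\otimes\pi)$, the functional $\varphi$ on $\Kill(\g)$ equals $\bar\varphi\circ\Kill(\pi)$, where $\bar\varphi\in\Kill(\mkh)^*$ is the functional attached to $\bar B$. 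Consequently
\[\bar\varphi\bigl(\eta_{\mkh}(\pi_*c)\bigr)=\varphi(\eta(c))\ne 0,\]
which forces $\bar\eta_{\mkh}(\pi_*[c])\ne 0$ in $\Kill(\mkh)$, and in particular $\bar\eta_{\mkh}\ne 0$.

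I do not expect a serious obstacle here: the argument is routine once the functoriality of $\Kill(-)$ and $H_*(-)$ with respect to Lie algebra homomorphisms is in hand, together with the identification $\Hom_K(\Kill(\g),K)\cong (\Sym^2\g)^{\g}$. The only point that needs a short verification is that the radical $N$ of $B$ is an ideal, which is immediate from invariance, and after that the naturality diagram does all the work.
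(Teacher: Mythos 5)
Your argument is correct and is essentially identical to the paper's proof: both pick a $3$-cycle $c$ with $\eta(c)\neq 0$, choose a linear form on $\Kill(\g)$ not vanishing on $\eta(c)$ (i.e.\ an invariant symmetric bilinear form $B$), pass to the quotient by the radical of $B$ to obtain a regular quadratic Lie algebra, and use naturality of $\eta$ and $\Kill(-)$ to conclude that the image of $c$ still has nonzero Koszul image. No gaps; the verification that the radical is an ideal is exactly the short computation you give.
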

\begin{proof}
Suppose that $c\in Z_3(\g)$ does not belong to the kernel of $\eta_\g$ (the function $\eta$ of $\g$). Consider a linear form $f$ on $\Kill(\g)$ not vanishing on $\eta(c)$: it defines an invariant symmetric bilinear form $B$ on $\g$. The kernel of the symmetric bilinear form $B$ is an ideal $\mk{i}$ of $\g$; let $\mkh=\g/\mk{i}$ be the quotient. Clearly $B$ factors through a symmetric bilinear form $B'$ on $\mkh$, and $(\mkh,B')$ is a regular quadratic Lie algebra.
Moreover, since $c$ is a 3-cycle, its image $c'$ in $\Lambda^3\mkh$ is a 3-cycle as well, and $\eta_{\mkh}(c)$ is the projection of $\eta_\g(c)$. If $f'$ is the linear form on $\Kill(\mkh)$ defined by $B'$, then $f$ is the composite of the projection $\Kill(\g)\to\Kill(\mkh)$ and $f'$. Thus we have $f'(\eta_\mkh(c'))=f(\eta(c))\neq 0$. Hence $c'\notin\Ker(\eta_\mkh)$.
\end{proof}

\begin{rem}
A regular quadratic Lie algebra is endowed with a canonical alternating 3-form, namely the image of the symmetric bilinear form by the adjoint Koszul map, called canonical 3-form. It is invariant and hence closed. Its image in cohomology is called canonical 3-cohomology class. Regular quadratic Lie algebras for which the canonical 3-form is exact (i.e.\ whose canonical 3-cohomology class vanishes) are sometimes called {\em exact}. For instance, the regular quadratic Lie algebra $(\mkh,B')$ constructed in the proof of Lemma \ref{somq} is non-exact.

Note that being exact is a property of regular quadratic Lie algebras; but this is not a property of the underlying Lie algebra. Indeed, assume for simplicity that $K$ has characteristic zero, and let $(\g,B)$ be a non-exact regular quadratic Lie algebra (e.g., a simple Lie algebra). Consider the semidirect product $\g\ltimes\g^*$ (as described after Corollary \ref{cor3form}), where the dual $\g^*$ is viewed as an abelian ideal with the coadjoint representation; let $B'$ be the bilinear form given by duality (for which both $\g$ and $\g^*$ are isotropic); also view $B$ as a bilinear form on $\g\ltimes\g^*$, with kernel $\g^*$. Then for all but finitely many $\lambda\in K$, the form $B'+\lambda B$ is non-degenerate; on the other hand, since $\eta^*B'$ is exact and $\eta^*B$ is non-exact (by a simple argument using that the projection $\g\ltimes\g^*\to\g$ is split), $B'+\lambda B$ is non-exact whenever $\lambda\neq 0$. Hence $\g\ltimes\g^*$ carries both structures of exact and non-exact regular quadratic Lie algebra.  
\end{rem}

\subsection{Filtration on the Killing module}\label{fiki}

Let again $R$ be arbitrary. There is a natural filtration on $\Kill(\g)$, defined as follows: for $i\ge 2$, we let $\Kill^{(i)}(\g)$ be the image of $\g\ot\g^{(i-1)}$ in $\Kill(\g)$. Thus 
\[\Kill(\g)=\Kill^{(2)}(\g)\supset \Kill^{(3)}(\g)\supset\dots\] 

The quotient $\Kill^{(2)}(\g)/\Kill^{(3)}(\g)$ is obviously isomorphic to the symmetric square $S^2(\g/[\g,\g])$. Thus the most interesting part of $\Kill(\g)$ lies in $\Kill^{(3)}(\g)$, which is the image of the Koszul map $\eta$. Again, $\Kill^{(i)}$ can be viewed as a functor, mapping surjections to surjections.

If $\g$ is $k$-nilpotent then $\Kill^{(k+2)}(\g)=\{0\}$. More generally, we have:

\begin{lem}\label{killni}
For every $i\ge 0$, the projection $\Kill(\g)\to\Kill(\g/\g^{(i+1)})$ induces an isomorphism of $R$-modules
\[\qquad\qquad\Kill(\g)/\Kill^{(i+2)}(\g)\to \Kill(\g/\g^{(i+1)}).\qquad\qquad\hfill\qed \]
\end{lem}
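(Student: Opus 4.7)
My plan is to prove the lemma by unpacking both sides as explicit quotients of $S^2\g$ and observing they coincide.

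Let $J \subset S^2\g$ denote the $R$-submodule spanned by all elements of the form $[x,y]\cc z - x\cc[y,z]$, so that by definition $\Kill(\g) = S^2\g / J$. Let $I \subset S^2\g$ denote the image of $\g \otimes \g^{(i+1)}$ in $S^2\g$, i.e.\ the submodule $\g \cc \g^{(i+1)}$. Since $\g^{(i+1)}$ is an ideal, the canonical projection $\pi: \g \to \bar\g := \g/\g^{(i+1)}$ induces an isomorphism $S^2\bar\g \cong S^2\g / I$. From this I would first observe that $\Kill^{(i+2)}(\g)$, being the image of $\g \otimes \g^{(i+1)}$ in $\Kill(\g) = S^2\g/J$, is exactly $(I+J)/J$. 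Hence
\[\Kill(\g)/\Kill^{(i+2)}(\g) \;\cong\; S^2\g/(I+J).\]

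Next I would identify the right-hand side the same way. The invariance ideal for $\bar\g$ is spanned by $[\bar x,\bar y]\cc\bar z - \bar x\cc[\bar y,\bar z]$; lifting $\bar x, \bar y, \bar z$ to $x,y,z\in\g$, these lifts are exactly the generators of $J$, reduced mod $I$. Therefore $\Kill(\bar\g) = S^2\bar\g / (\text{image of } J) = (S^2\g/I) / ((I+J)/I) \cong S^2\g/(I+J)$. Under these identifications the map induced by $\pi$ is the identity on $S^2\g/(I+J)$, hence an isomorphism.

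The only step requiring any real verification is the claim that the invariance relations for $\bar\g$ lift precisely (not just up to $I$) to the generators of $J$: this just uses that $I$ is already killed when passing to $S^2\bar\g$. The argument is purely formal, with no homological content beyond definitions, so I do not anticipate any obstacle; the proof will read as a two-line identification of both sides with the same quotient $S^2\g/(J + \g\cc\g^{(i+1)})$.
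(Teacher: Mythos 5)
Your proof is correct and is exactly the formal identification the paper has in mind: the lemma is stated with a \qed because both sides are immediately seen to be the quotient $S^2\g/\bigl(J+\g\cc\g^{(i+1)}\bigr)$, using right-exactness of $S^2$ and the fact that the invariance relations of $\g/\g^{(i+1)}$ lift to those of $\g$. No discrepancy with the paper.
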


This shows that the quotients $\Kill(\g)/\Kill^{(i+2)}(\g)$ are encoded in nilpotent quotients of $\g$.

\begin{prop}\label{met3ni}
If $\g$ is metabelian (i.e., $D\g=[\g,\g]$ is abelian), then $\Kill^{(5)}(\g)=\{0\}$. In particular, the surjection $\g\to\g/\g^{(4)}$ induces an isomorphism $\Kill(\g)\to\Kill(\g/\g^{(4)})$.
\end{prop}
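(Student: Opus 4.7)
My plan is to prove $\Kill^{(5)}(\g)=0$ by a direct manipulation in the cokernel. The workhorse is the congruence
\[a\cc[b,c]\equiv [a,b]\cc c\pmod{\mathrm{Im}(T)},\]
which is equivalent to the defining relation $x\cc[y,z]\equiv y\cc[z,x]$ of $\Kill(\g)$ and just records that we are computing $\g$-coinvariants of $S^2\g$. Set $M:=[\g,\g]$; by the metabelian hypothesis $M$ is abelian, so in particular $[M,M]=0$.

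First I would observe that $\g^{(4)}=[\g,\g^{(3)}]=[\g,[\g,[\g,\g]]]$ is spanned by elements of the form $b=[y_1,[y_2,z]]$ with $y_1,y_2\in\g$ and $z\in M$, so it is enough to verify $x\cc b\equiv 0$ in $\Kill(\g)$ for such generators. Second, I apply the congruence twice to transport $x$ across the two outer brackets:
\[x\cc[y_1,[y_2,z]]\equiv [x,y_1]\cc[y_2,z]\equiv [[x,y_1],y_2]\cc z.\]

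The only real idea of the argument comes next: rewrite the result symmetrically as $z\cc[[x,y_1],y_2]$ and apply the congruence a third time to obtain $[z,[x,y_1]]\cc y_2$. Now both $z$ and $[x,y_1]$ lie in $M$, so $[z,[x,y_1]]=0$ by the metabelian hypothesis, and hence $x\cc b\equiv 0$. This proves $\Kill^{(5)}(\g)=0$. The second assertion, that $\g\to\g/\g^{(4)}$ induces an isomorphism on Killing modules, is then immediate from Lemma \ref{killni} at $i=3$.

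The only step that is not purely mechanical is deciding, after two uses of the coinvariance relation, to swap the factors of the symmetric product before applying the relation a third time. That swap places both $M$-valued entries inside a single Lie bracket, and it is precisely there that the metabelian identity $[M,M]=0$ delivers the vanishing; without this reordering the relation keeps producing elements of $\g^{(k)}\cc\g^{(5-k)}$ without ever collapsing.
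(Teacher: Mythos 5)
Your proof is correct and follows essentially the same route as the paper's: both reduce to a generator $x\cc b$ with $b\in\g^{(4)}$, push $x$ across the outer brackets using the coinvariance relation, and conclude from $[[\g,\g],[\g,\g]]=0$. The paper merely compresses your last two steps into a single application of the cyclic form $a\cc[b,c]\equiv b\cc[c,a]$, landing directly on $z\cc[[u,v],[x,y]]=0$.
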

\begin{proof}
Denoting $\equiv$ the equivalence relation on $S^2\g$ meaning equality in $\Kill(\g)$, we have, for any elements $x,y,z,u,v\in\g$
\[x\cc [y,[z,[u,v]]]\equiv [x,y]\cc [z,[u,v]]\equiv z\cc [[u,v],[x,y]]=0.\]
The second statement follows by applying Lemma \ref{killni} with $i=3$.
\end{proof}

\begin{cor}
If $\g$ is a regular quadrable metabelian Lie algebra over a field, then it is 3-nilpotent.
\end{cor}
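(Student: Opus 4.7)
The plan is to exploit Proposition \ref{met3ni} together with non-degeneracy of the quadratic form. Fix a non-degenerate invariant symmetric bilinear form $B$ on $\g$; equivalently (since $\Hom_R(\Kill(\g),K)$ canonically parametrizes invariant symmetric bilinear forms on $\g$), $B$ corresponds to a linear form $\varphi_B:\Kill(\g)\to K$ with the property that if $\varphi_B(x\cc y)=0$ for every $x\in\g$, then $y=0$.

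Now I apply Proposition \ref{met3ni}: since $\g$ is metabelian, $\Kill^{(5)}(\g)=\{0\}$. By the very definition of the filtration recalled in \S\ref{fiki}, this means that for every $x\in\g$ and every $y\in\g^{(4)}$, the element $x\cc y\in S^2\g$ vanishes in $\Kill(\g)$. Evaluating $\varphi_B$, we deduce $B(x,y)=0$ for all such $x,y$, and the non-degeneracy of $B$ forces $y=0$. Hence $\g^{(4)}=\{0\}$, which by definition means that $\g$ is $3$-nilpotent. No further step is needed; the only real content is the combination of Proposition \ref{met3ni} with the translation between $\Kill(\g)$ and invariant symmetric bilinear forms, and there is no obstacle beyond bookkeeping.
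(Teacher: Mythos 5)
Your proof is correct and is essentially the paper's own argument, just written out in more detail: the paper likewise deduces from Proposition \ref{met3ni} that $\g^{(4)}$ lies in the kernel of every invariant symmetric bilinear form, and concludes by non-degeneracy. Nothing to change.
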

\begin{proof}
Indeed, Proposition \ref{met3ni} shows that $\g^{(4)}$ belongs to the kernel of every invariant symmetric bilinear form.
\end{proof}

\begin{cor}\label{meta3ni}Given a field $K$, we have the following equivalent statements:
\begin{enumerate}[(i)] 
\item\label{supi} Every 3-nilpotent Lie algebra over $K$ has a zero reduced Koszul map;
\item every metabelian Lie algebra over $K$ has a zero reduced Koszul map.
\end{enumerate}
\end{cor}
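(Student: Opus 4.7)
The plan is to deduce each implication from the fact that the two nilpotency/solvability classes interact nicely via the derived/lower central series, and to use Proposition \ref{met3ni} as the essential input.

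For (ii)$\Rightarrow$(i), I would first observe that any $3$-nilpotent Lie algebra is automatically metabelian. Indeed, by the general inclusion $D^i\g\subset\g^{(2^i)}$ recalled in the preliminaries, we have $D^2\g\subset\g^{(4)}$, so $\g^{(4)}=\{0\}$ forces $[[\g,\g],[\g,\g]]=\{0\}$. Thus (ii) applies directly to every $3$-nilpotent $\g$, yielding (i).

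For (i)$\Rightarrow$(ii), let $\g$ be metabelian and let $c\in Z_3(\g)$; I want to show $\eta_\g(c)=0$. Consider the surjection $\pi:\g\twoheadrightarrow\g/\g^{(4)}$. The quotient $\g/\g^{(4)}$ is $3$-nilpotent, so by hypothesis (i), its reduced Koszul map vanishes. By naturality of the Chevalley--Eilenberg complex and of the Koszul map (i.e.\ functoriality of $\eta$ and of $\Kill$), the diagram
\[
\begin{array}{ccc}
Z_3(\g) & \xrightarrow{\;\eta_\g\;} & \Kill(\g) \\
\Lambda^3\pi\big\downarrow & & \big\downarrow\Kill(\pi) \\
Z_3(\g/\g^{(4)}) & \xrightarrow{\;\eta_{\g/\g^{(4)}}\;} & \Kill(\g/\g^{(4)})
\end{array}
\]
commutes, and the left vertical map sends cycles to cycles. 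Therefore
\[\Kill(\pi)\bigl(\eta_\g(c)\bigr)=\eta_{\g/\g^{(4)}}\bigl(\Lambda^3\pi(c)\bigr)=0.\]
By Proposition \ref{met3ni}, the right vertical map $\Kill(\pi)$ is an isomorphism, so $\eta_\g(c)=0$. Hence the reduced Koszul map of $\g$ vanishes, proving (ii).

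There is essentially no obstacle: the only nontrivial ingredient is Proposition \ref{met3ni}, which is already established, and the elementary containment $D^2\g\subset\g^{(4)}$. Everything else is naturality of the Koszul construction, which was set up in the preceding subsections. So the argument is a short chase once these facts are in hand.
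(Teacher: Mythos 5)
Your proof is correct, and for the nontrivial implication (i)$\Rightarrow$(ii) it takes a genuinely different route from the paper. The paper argues by contradiction: it invokes Lemma \ref{somq} to replace a metabelian counterexample by a regular quadratic quotient, and then uses invariance and non-degeneracy of the scalar product to show that a metabelian regular quadratic Lie algebra is automatically 3-nilpotent, contradicting (i). You instead use Proposition \ref{met3ni} directly: the surjection $\pi\colon\g\to\g/\g^{(4)}$ induces an isomorphism $\Kill(\g)\to\Kill(\g/\g^{(4)})$, and functoriality of $\eta$ together with the 3-nilpotency of the quotient forces $\eta_\g$ to vanish on $Z_3(\g)$. (As the paper notes after the definition of $\bta$, vanishing of the reduced Koszul map is equivalent to vanishing of $\eta$ on 3-cycles, since $\eta$ kills 3-boundaries anyway; so working on $Z_3$ rather than $H_3$ is legitimate.) Your argument is more direct --- no contradiction, no passage to quadratic forms --- and, since Proposition \ref{met3ni} is stated over an arbitrary commutative ring and your remaining steps are pure functoriality, it would extend verbatim beyond fields, whereas Lemma \ref{somq} genuinely uses linear forms on $\Kill(\g)$ and hence the field hypothesis. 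What the paper's route buys is a further illustration of the reduction-to-regular-quadratic technique that it exploits repeatedly elsewhere (e.g.\ in Theorems \ref{everyni} and \ref{soldi}). The trivial direction (3-nilpotent $\Rightarrow$ metabelian via $D^2\g\subset\g^{(4)}$) is handled the same way in both.
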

\begin{proof}
The reverse implication is trivial since 3-nilpotent implies metabelian. 
Conversely, assuming (\ref{supi}), let by contradiction $\g$ be a metabelian Lie algebra with a nonzero reduced Koszul map; by Lemma \ref{somq}, we can suppose, replacing $\g$ by a quotient if necessary, that $\g$ is regular quadratic, say with non-degenerate scalar product $\langle\cdot,\cdot\rangle$. Since $\g$ is metabelian, we have $\langle \g,[[\g,\g],[\g,\g]]\rangle=0$. By invariance, we deduce successively $\langle [\g,[\g,\g]],[\g,\g]\rangle=0$ and $\langle [\g,[\g,[\g,\g]]],\g\rangle=0$. By non-degeneracy, we deduce $[\g,[\g,[\g,\g]]]=0$, i.e.\ $\g$ is 3-nilpotent, in contradiction with (\ref{supi}).
\end{proof}

\begin{rem}
Similar arguments show that if $\g$ is quadrable and $[\g^{(i)},\g^{(j)}]=\{0\}$ for some $i,j\ge 1$, then $\g$ is $(i+j-1)$-nilpotent. For instance, if $\g$ is abelian-by-(2-nilpotent) then $\g$ is 5-nilpotent.

On the other hand, if $\g$ is quadrable and center-by-metabelian (i.e.\ $[\g,D^2\g]=\{0\}$), then $\g$ does not need be 4-nilpotent, nor even nilpotent at all: for instance the well-known 4-dimensional ``split oscillator algebra" with basis $(e,x,y,z)$ and nontrivial brackets $[e,x]=x$, $[e,y]=-y$, $[x,y]=z$ is quadrable, center-by-metabelian (and (2-nilpotent)-by-abelian as well) but is not nilpotent. We can also easily find center-by-metabelian nilpotent quadrable Lie algebras of arbitrary large nilpotency length, e.g.\ the Lie algebras $\mk{w}(2n-1)$ defined in Section~\ref{small}.
\end{rem}

\subsection{Killing modules and direct products}

The case of abelian Lie algebras show that of course the Killing module does not behave well under direct products. However, this is, in a sense, the only obstruction, as the proposition below indicates.

Let $\g_1,\g_2$ be Lie algebras. Then for $k=1,2$, the natural Lie algebra homomorphisms $\g_k\to\g_1\times\g_2\to\g_k$ (whose composite is the identity) induce natural maps $\Kill(\g_k)\to\Kill(\g_1\times\g_2)\to\Kill(\g_k)$, whose composite is the identity, and therefore we obtain natural maps
\begin{equation}\label{kikiki}\Kill(\g_1)\oplus\Kill(\g_2)\to\Kill(\g_1\times\g_2)\to\Kill(\g_1)\oplus\Kill(\g_2),\end{equation}
whose composite is the identity; in particular the left-hand map is injective and the right-hand map is surjective. Clearly the same holds, by restriction, for $\Kill^{(i)}$.

\begin{prop}
For every $i\ge 3$, the above maps restrict to inverse isomorphisms
\[
\Kill^{(i)}(\g_1\times\g_2)\rightleftarrows\Kill^{(i)}(\g_1)\oplus\Kill^{(i)}(\g_2).\]
Moreover, the image of the reduced Koszul map splits according to this product.
\end{prop}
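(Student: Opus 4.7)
The plan is to show that ``mixed'' symmetric tensors die in $\Kill(\g_1\times\g_2)$, and then apply this observation twice: once to identify $\Kill^{(i)}(\g_1\times\g_2)$ with $\Kill^{(i)}(\g_1)\oplus\Kill^{(i)}(\g_2)$, and once to decompose the image of $\bta$. The only substantive step is the vanishing computation in $\Kill$; everything else reduces to formal bookkeeping across the four summands of a tensor decomposition.

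First I would record that $\g^{(k)}=\g_1^{(k)}\oplus\g_2^{(k)}$ for all $k\ge 1$, so $\g\ot\g^{(i-1)}$ splits into four pieces according to which factor each tensorand lies in. The two pure pieces $\g_j\ot\g_j^{(i-1)}$ ($j=1,2$) generate the images of $\Kill^{(i)}(\g_1)$ and $\Kill^{(i)}(\g_2)$ inside $\Kill(\g)$. The key observation is that the two mixed pieces map to zero: any $y\in\g_2^{(i-1)}\subset[\g_2,\g_2]$ is a sum of brackets $\sum[a_\alpha,b_\alpha]$ with $a_\alpha,b_\alpha\in\g_2$, and the defining relation $x\cc[y',z']\equiv y'\cc[z',x]$ in $\Kill(\g)$ yields
\[
x_1\cc[a_\alpha,b_\alpha]\ \equiv\ a_\alpha\cc[b_\alpha,x_1]\ =\ 0,
\]
since $[\g_1,\g_2]=\{0\}$ in the direct product; the symmetric argument handles $\g_2\ot\g_1^{(i-1)}$. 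Therefore the left-hand arrow of~\eqref{kikiki}, restricted to $\Kill^{(i)}$, is surjective, and since \eqref{kikiki} has an identity composite, we obtain mutually inverse isomorphisms as claimed.

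For the second assertion I would decompose
\[\Lambda^3\g=\Lambda^3\g_1\oplus(\Lambda^2\g_1\ot\g_2)\oplus(\g_1\ot\Lambda^2\g_2)\oplus\Lambda^3\g_2\]
and observe that $\eta$ vanishes on each mixed summand: $\eta(x_1\we y_1\we z_2)=y_1\cc[z_2,x_1]=0$ since $[\g_1,\g_2]=\{0\}$, while $\eta(x_1\we y_2\we z_2)=x_1\cc[y_2,z_2]\in\g_1\ot[\g_2,\g_2]$ vanishes by the previous paragraph. Since the projections $\pi_j:\g_1\times\g_2\to\g_j$ are Lie algebra morphisms, they induce chain maps on the Chevalley--Eilenberg complexes, so any $c\in Z_3(\g_1\times\g_2)$ decomposes as $c=c_1+c_2+c_{\mathrm{mix}}$ with $c_j:=(\Lambda^3\pi_j)(c)\in Z_3(\g_j)$ (reinserted along the factor inclusion) and $c_{\mathrm{mix}}$ lying in the two mixed summands. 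Then $\bta(c)=\bta_{\g_1}(c_1)+\bta_{\g_2}(c_2)$ under the isomorphism of the first part, yielding the announced splitting of $\mathrm{Im}(\bta)$.
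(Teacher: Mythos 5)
Your proof is correct and follows essentially the same route as the paper: the whole content is that the mixed summands of $\g\ot\g^{(i-1)}$ and of $\Lambda^3(\g_1\times\g_2)$ die in $\Kill(\g_1\times\g_2)$, which you establish via the same relation $x_1\cc[a,b]\equiv a\cc[b,x_1]=0$. If anything you are slightly more careful in the second part, since you explicitly account for the mixed 3-cycles in $(\Lambda^2\g_1\ot\g_2)\oplus(\g_1\ot\Lambda^2\g_2)$ — which the paper's asserted decomposition $Z_3(\g_1\times\g_2)=Z_3(\g_1)\oplus Z_3(\g_2)$ glosses over — and check directly that $\eta$ annihilates them.
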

\begin{proof}
It is enough to check that the right-hand map in (\ref{kikiki}) is injective on $\Kill^{(3)}(\g_1\times\g_2)$. Indeed, the image of both $(\g_1\times\{0\})\otimes (\{0\}\times[\g_2,\g_2])$ and $([\g_1,\g_1]\times\{0\})\otimes (\{0\}\times\g_2)$ in $\Kill(\g_1\times\g_2)$ is obviously zero.

For the second statement, it is convenient to view the reduced Koszul map as defined on the 3-cycles (rather than on the 3-homology). Indeed we have a canonical decomposition $Z_3(\g_1\times\g_2)=Z_3(\g_1)\oplus Z_3(\g_2)$, and clearly $\eta(Z_3(\g_k))$, for each of $k=1,2$, is contained in the factor $\Kill^{(3)}(\g_k)$ of $\Kill^{(3)}(\g_1\times\g_2)$.
\end{proof}

\begin{rem}\label{etaprod}The behavior for infinite products is more of a problem. If $\g_n$ is a sequence of Lie algebras over $R$ (the reader can assume that $R=\mathbf{C}$), there is an obvious homomorphism $\Kill(\prod_n\g_n)\to\prod_n\Kill(\g_n)$. It can fail to be surjective (e.g., when all $\g_n$ are abelian of unbounded dimension). The injectivity can also fail (say, when the $\g_n$ are free Lie algebras of unbounded rank). Then it is natural to wonder whether it can happen that the reduced Koszul map of each $\g_n$ vanishes, but not that of $\prod_n\g_n$, or more generally whether the class of Lie algebras with zero reduced Koszul map is stable under taking projective limits of surjective maps.
\end{rem}

\subsection{Change of scalars}

Let $R$ be a commutative ring and let $S$ be a commutative $R$-algebra. We write $\Kill_R$ instead of $\Kill$, $\Lambda^n_R$ instead of $\Lambda^n$, etc., when we need to emphasize the ground ring $R$.

\begin{lem}\label{tenso}
Let $\g$ be a Lie algebra over $R$ and $\g_S=S\ot_R\g$.
We have a natural $S$-module identification $\Lambda^n_S(\g_S)\simeq S\ot\Lambda^n_R(\g)$. If moreover $S$ is flat over $R$, then we also have identifications $\Kill_S(\g_S)\simeq S\ot_R\Kill_R(\g)$ and $H_n^S(\g_S)\simeq S\ot_R H_n^R(\g)$, and the Koszul maps ($\check{\eta}$, $\eta$, $\bar{\eta}$) of the Lie $S$-algebra $\g_S$ are obtained from those of $\g$ by tensoring with $S$.

In particular, if $\bar{\eta}$ vanishes for $\g$ then it vanishes for the $S$-algebra $\g_S$, and if $S$ is faithfully flat over $R$ (e.g., $R$ is a field), then the converse holds.  
\end{lem}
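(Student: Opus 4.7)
The plan is to chase each construction through base change, using that $S\ot_R(-)$ is always right-exact and is exact when $S$ is flat.

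First, the exterior power identification $\Lambda^n_S(\g_S)\simeq S\ot_R\Lambda^n_R(\g)$ (and similarly for tensor and symmetric powers) follows from the universal property: an $S$-multilinear alternating map from $\g_S^n$ to an $S$-module is the same datum as an $R$-multilinear alternating map from $\g^n$. Equivalently, $\Lambda^n_R(\g)$ is a quotient of $\g^{\ot_R n}$ by relations that are compatible with base change, and this step requires no flatness.

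For the Killing module, $\Kill_R(\g)=\coker(T_R)$ with $T_R:\Lambda^2_R\g\ot_R\g\to S^2_R\g$ the defining map. Under the identifications of exterior and symmetric powers, $T_R\ot_R\mathrm{id}_S$ is precisely $T_S$, so right-exactness of $S\ot_R(-)$ yields $\Kill_S(\g_S)\simeq S\ot_R\Kill_R(\g)$, again without any flatness assumption. For homology, the Chevalley--Eilenberg differential is given by an $R$-linear formula and hence commutes with base change, so the CE complex of $\g_S$ is $S\ot_R\Lambda^*_R\g$ with differential $\mathrm{id}_S\ot\bd$. Flatness of $S$ over $R$ then gives $H_n^S(\g_S)\simeq S\ot_R H_n^R(\g)$, via the standard fact that a flat additive functor commutes with taking homology of chain complexes. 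This is the one place where flatness is genuinely used.

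The raw Koszul map $\check\eta$, its factorization $\eta:\Lambda^3\g\to\Kill(\g)$, and its restriction $\bar\eta$ to $3$-cycles are all defined by $R$-linear formulas on tensor powers and hence are compatible with all the identifications above; consequently the Koszul maps of $\g_S$ are obtained from those of $\g$ by tensoring with $S$. For the last sentence, if $\bar\eta_\g=0$ then $\bar\eta_{\g_S}=\mathrm{id}_S\ot\bar\eta_\g=0$; conversely, if $S$ is faithfully flat over $R$ and $\bar\eta_{\g_S}=0$, then faithful flatness forces $\bar\eta_\g=0$, since a map of $R$-modules whose base change to $S$ is zero must already be zero. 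The main obstacle in writing this out is purely naturality bookkeeping, in particular verifying that the identification $\Kill_S(\g_S)\simeq S\ot_R\Kill_R(\g)$ really intertwines the two Koszul maps, but every square involved commutes by construction.
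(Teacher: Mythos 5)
Your proof is correct and follows essentially the same route as the paper: identify $\Lambda^n_S(\g_S)$ with $S\ot_R\Lambda^n_R(\g)$, use flatness to identify cycles (equivalently, to commute $S\ot_R-$ with homology), use right-exactness for the cokernels, and invoke faithful flatness for the converse. Your observation that the identification $\Kill_S(\g_S)\simeq S\ot_R\Kill_R(\g)$ needs only right-exactness and not flatness is a correct (minor) sharpening of the statement as written.
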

\begin{proof}
Inverse isomorphisms between $\Lambda^n_S(\g_S)$ and $S\ot\Lambda^n_R(\g)$ are given, say for $n=2$, by $(s_1x_1\we s_2x_2)\mapsto s_1s_2\ot (x_1\we x_2)$ from the left to the right and $s\ot (x_1\we x_2)\mapsto (sx_1)\we x_2$ from the right to the left; they restrict (using flatness) to inverse isomorphisms between $Z_n^S(\g_S)$ and $S\ot Z_n^R(\g)$. By right-exactness of the functor $S\ot_R-$, the identification $H_n^S(\g_S)\simeq S\ot_R H_n^R(\g)$ follows. The argument for the other maps is similar. 

The second statement is immediate.
\end{proof}

An easy example of application of Lemma \ref{tenso} is the following:

\begin{lem}\label{reducomp}
Let $\g$ be a finite-dimensional nilpotent Lie algebra over a field $K$ of characteristic zero, whose reduced Koszul map is nonzero (resp.\ is surjective). Then there exists a complex Lie algebra $\g'$ of the same dimension and same nilpotency length, whose reduced Koszul map is nonzero (resp.\ surjective) as well.
\end{lem}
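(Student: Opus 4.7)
The plan is to use descent followed by extension of scalars, with Lemma \ref{tenso} providing the compatibility of the reduced Koszul map with flat base change in both directions.

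First I would descend $\g$ to a finitely generated subfield. Fix a basis of $\g$; the structure constants are finitely many elements of $K$, so they generate a subfield $K_0\subset K$ which is finitely generated over $\mathbf{Q}$. Let $\g_0$ be the Lie algebra over $K_0$ defined by these structure constants; then $\g\simeq \g_0\otimes_{K_0}K$, and $K$ is faithfully flat over $K_0$. By flatness, the lower central series satisfies $(\g_0)^{(i)}\otimes_{K_0}K=\g^{(i)}$, so $\g_0$ has the same nilpotency length as $\g$, and visibly the same $K_0$-dimension as the $K$-dimension of $\g$. Moreover, Lemma \ref{tenso} (applied with $R=K_0$, $S=K$) identifies $\bar{\eta}_{\g}$ with $\bar{\eta}_{\g_0}\otimes_{K_0}K$; since faithful flatness detects both nonzeroness and surjectivity, $\bar{\eta}_{\g_0}$ is nonzero (resp.\ surjective).

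Next I would embed $K_0$ into $\mathbf{C}$. Being finitely generated over $\mathbf{Q}$, the field $K_0$ has finite transcendence degree, say $n$; pick $n$ elements of $\mathbf{C}$ algebraically independent over $\mathbf{Q}$ (possible since $\mathbf{C}$ has transcendence degree $2^{\aleph_0}$ over $\mathbf{Q}$), giving an embedding of a purely transcendental extension of $\mathbf{Q}$ into $\mathbf{C}$; extend this to $K_0$ using that $\mathbf{C}$ is algebraically closed. Set $\g':=\g_0\otimes_{K_0}\mathbf{C}$ via this embedding. Again $\mathbf{C}$ is (faithfully) flat over the field $K_0$, so Lemma \ref{tenso} gives $\bar{\eta}_{\g'}=\bar{\eta}_{\g_0}\otimes_{K_0}\mathbf{C}$. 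Nonzeroness (resp.\ surjectivity) of $\bar{\eta}_{\g_0}$ is preserved by this tensor, and the dimension and nilpotency length are preserved by the same flatness argument as above. Thus $\g'$ is the required complex Lie algebra.

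The argument is essentially routine, so there is no serious obstacle; the only point requiring any care is that both invariants at issue (nonvanishing and surjectivity of $\bar\eta$) behave well under faithfully flat base change in both directions, which is precisely what Lemma \ref{tenso} supplies.
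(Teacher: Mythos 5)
Your proposal is correct and follows essentially the same route as the paper: descend $\g$ to a small field of definition generated by the structure constants, embed that field into $\mathbf{C}$, and transport the nonvanishing (resp.\ surjectivity) of $\bar\eta$ through both base changes via Lemma \ref{tenso}. The only cosmetic difference is that the paper takes a countable field of definition and leaves the embedding into $\mathbf{C}$ implicit, whereas you spell out the transcendence-degree argument.
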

\begin{proof}
Let $L\subset K$ be a countable field of definition of $\g$ (e.g., the field generated by the structural constants in a given basis): this means that $\g\simeq K\ot_L\mk{h}$ for some Lie algebra $\mk{h}$ over $L$. Fix an embedding of $L$ into $\mathbf{C}$. Then the complex Lie algebra $\g'=\mathbf{C}\ot_L\mk{h}$ has the same dimension as $\g$, the same nilpotency length, etc. By Lemma \ref{tenso}, its reduced Koszul map has the same linear algebra properties (dimension of the image and of the cokernel), whence the conclusion.
\end{proof}

\section{2-nilpotent Lie algebras}

The ground ring $R$ is commutative.

\begin{thm}\label{reki4}
Let $\g$ be a Lie algebra over $R$. Suppose that $\mathrm{Tor}_1^R([\g,\g],\g/[\g,\g])=0$. Then the image of $3\hat{\eta}$ is contained in $\Kill^{(4)}(\g)$. In particular, if 3 is invertible in $R$, then the image of the reduced Koszul map $\bar{\eta}$ is contained in $\Kill^{(4)}(\g)$.
\end{thm}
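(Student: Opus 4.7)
Let $c = \sum_i x_i\wedge y_i\wedge z_i \in Z_3(\g)$ and set $L = [\g,\g]$, $V = \g/L$, $W = L/\g^{(3)}$. Using the threefold cyclic identity $\eta(x\wedge y\wedge z) = x\cc[y,z] = y\cc[z,x] = z\cc[x,y]$ in $\Kill(\g)$, the element $3\bar\eta(c) = 3\eta(c)$ is represented in $S^2\g$ by the image of
\[
\tilde{A} := \sum_i [x_i,y_i]\otimes z_i + [y_i,z_i]\otimes x_i + [z_i,x_i]\otimes y_i \in L\otimes \g
\]
under $L\otimes\g \to \g^{\otimes 2} \to S^2\g$. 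The strategy is to write $\tilde A$, modulo elements that map into $\Kill^{(4)}(\g)$, as an element of $L\otimes L$, and to check that the image of $L\otimes L$ in $\Kill(\g)$ is contained in $\Kill^{(4)}(\g)$.

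The cycle condition $\partial c = 0$ in $\Lambda^2\g$ means the image of $\tilde A$ in $\g^{\otimes 2}$ lies in the kernel $I$ of $\g^{\otimes 2}\to\Lambda^2\g$. Since $I$ is generated by tensors $u\otimes u$, which are fixed by the swap $\sigma$, one has $\tilde A = \sigma(\tilde A)$ in $\g^{\otimes 2}$, i.e.\ $\tilde A$ equals $\tilde B = \sum_i z_i\otimes[x_i,y_i]+\cdots \in \g\otimes L$ after inclusion into $\g^{\otimes 2}$. This symmetric description is what activates the Tor hypothesis.

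Next I claim that the image of $\tilde A$ under the projection $\pi: L\otimes\g \to L\otimes V$ lies in the image of $\g^{(3)}\otimes V \to L\otimes V$. Equivalently, the further image
\[
\Theta(c_V) := \sum_i \phi(\bar x_i\wedge\bar y_i)\otimes\bar z_i + \phi(\bar y_i\wedge\bar z_i)\otimes\bar x_i + \phi(\bar z_i\wedge\bar x_i)\otimes\bar y_i \in W\otimes V
\]
vanishes, where $c_V = \sum_i \bar x_i\wedge\bar y_i\wedge\bar z_i \in \Lambda^3 V$ and $\phi:\Lambda^2 V\to W$ is the well-defined induced bracket (well-defined because $[\g,L]\subset\g^{(3)}$). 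This vanishing is read off from the 2-nilpotent quotient $\g/\g^{(3)}$: the identity $\partial\bar c = 0$ in $\Lambda^2(\g/\g^{(3)})$ lives in the submodule $W\wedge(\g/\g^{(3)})$, and projecting via the natural map $w\wedge a\mapsto w\otimes\bar a$ produces $\Theta(c_V) = 0$. Equivalently, $\Theta$ is (up to swap) the $d_2$-differential on $E_2^{3,0}=\Lambda^3 V$ in the Hochschild--Serre spectral sequence for the central extension $0\to W\to\g/\g^{(3)}\to V\to 0$, and the edge map $H_3(\g/\g^{(3)}) \to \Lambda^3 V$ lands inside $\ker(d_2)$.

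With $\Theta(c_V) = 0$ established, the Tor hypothesis $\mathrm{Tor}_1^R(L,V)=0$ yields the short exact sequence $0\to L\otimes L\to L\otimes\g\to L\otimes V\to 0$. Since $\pi(\tilde A)$ comes from $\g^{(3)}\otimes V$, lifting through the surjection $\g^{(3)}\otimes\g\twoheadrightarrow\g^{(3)}\otimes V$ and pushing along $\g^{(3)}\hookrightarrow L$ gives $\tilde\alpha\in\g^{(3)}\otimes\g$ whose image in $L\otimes V$ matches $\pi(\tilde A)$; hence $\tilde A - \tilde\alpha \in L\otimes L$. Call this $\gamma$. The image of $\tilde\alpha$ in $\Kill(\g)$ lies in $\Kill^{(4)}(\g)$ by definition (and $a\cc b = b\cc a$). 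For $\gamma$, the $T$-identity $T((p\wedge[r,s])\otimes q) = p\cc[[r,s],q] - [r,s]\cc[q,p]$ gives $[r,s]\cc[p,q] \equiv p\cc[[r,s],q]$ in $\Kill(\g)$, and $[[r,s],q]\in\g^{(3)}$ puts the right-hand side in $\Kill^{(4)}(\g)$; so the image of $L\otimes L$ in $\Kill(\g)$ lies in $\Kill^{(4)}(\g)$. Combining, $3\eta(c) \in \Kill^{(4)}(\g)$. The main obstacle is the vanishing $\Theta(c_V)=0$ of Step~3: one must either verify that the projection $W\wedge(\g/\g^{(3)}) \to W\otimes V$ is well defined (which is delicate in characteristic $2$, and likely requires ancillary Tor vanishings or a splitting) or argue via the Hochschild--Serre spectral sequence, which itself needs to be set up carefully over a general commutative ring.
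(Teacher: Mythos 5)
Your overall plan is sound (reduce $3\eta(c)$ to the image of $[\g,\g]\cc[\g,\g]$ plus $\g\cc\g^{(3)}$, and check that both land in $\Kill^{(4)}(\g)$ via the invariance relation $[r,s]\cc[p,q]\equiv \pm\,p\cc[[r,s],q]$), and the end of your argument is fine. But the step you flag as ``the main obstacle'' --- the vanishing $\Theta(c_V)=0$ --- is not a technical loose end: it is the entire content of the theorem, and it is exactly where the hypothesis $\mathrm{Tor}_1^R([\g,\g],\g/[\g,\g])=0$ must be spent. As written, you never actually use that hypothesis where it matters: the symmetry $\tilde A=\sigma(\tilde A)$ and the injectivity of $L\ot L\to L\ot\g$ play no role in your subsequent lifting argument (only right-exactness of $-\ot V$ and $\g^{(3)}\ot-$ is used there). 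Worse, your reduction passes to the quotient $\g/\g^{(3)}$ and asks for a well-defined map $W\we(\g/\g^{(3)})\to W\ot V$ with $W=[\g,\g]/\g^{(3)}$; making that map well defined by the same mechanism would require $\mathrm{Tor}_1^R([\g,\g]/\g^{(3)},\,\g/[\g,\g])=0$, which is a different condition and does not follow from the stated hypothesis. So the proposal has a genuine gap at its central step, and the specific reduction you chose points the Tor condition at the wrong pair of modules.

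The paper closes exactly this gap by \emph{not} passing to $\g/\g^{(3)}$. For the extension $0\to W\to V\stackrel{p}\to B\to 0$ with $W=[\g,\g]$, $V=\g$, $B=\g/[\g,\g]$, the hypothesis $\mathrm{Tor}_1^R(W,B)=0$ gives exactness of $W\ot W\to (V\ot W)\oplus(W\ot V)\to V\ot V$, hence a well-defined ``first-factor projection'' $f:(V\ot W)+(W\ot V)\to B\ot W$ sending $v\ot w\mapsto p(v)\ot w$ and $w\ot v\mapsto 0$ (Lemmas \ref{esm}--\ref{projvw}). Composing $f$ with the globally defined antisymmetrization $\lambda:\Lambda^2 V\to V^{\ot 2}$, $v\we v'\mapsto v\ot v'-v'\ot v$, yields a map $h$ on the image of $V\ot W$ in $\Lambda^2 V$ with $h(v\we w)=p(v)\ot w$ (Lemma \ref{lavvw}); this sidesteps your characteristic-$2$ worry, since nothing is being split. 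Applying $h$ to the cycle identity $\sum x\we[y,z]+y\we[z,x]+z\we[x,y]=0$ gives $\sum p(x)\ot[y,z]+\dots=0$ in $B\ot W$, and right-exactness of $-\ot[\g,\g]$ then places $\sum x\ot[y,z]+\dots$ in the image of $[\g,\g]\ot[\g,\g]$ --- which is your $\gamma$, with $\tilde\alpha=0$. If you replace your Step 3 by this argument (i.e.\ prove the vanishing in $(\g/[\g,\g])\ot[\g,\g]$ rather than in $(L/\g^{(3)})\ot V$), the rest of your write-up goes through.
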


\begin{ex}
The assumption $\mathrm{Tor}_1^R([\g,\g],\g/[\g,\g])=0$ holds when either $[\g,\g]$ is flat or $\g/[\g,\g]$ is a flat $R$-module. In particular, it holds when $R$ is a PID and $[\g,\g]$ is a torsion-free $R$-module.
\end{ex}

\begin{rem}
The proof shows more generally that the theorem holds as soon as the inclusion of $[\g,\g]$ into $\g$ induces an injection after tensoring with $\g/[\g,\g]$. In particular, it holds when $[\g,\g]$ is a direct factor in $\g$.
\end{rem}

\begin{lem}\label{esm}
Consider an exact sequence of $R$-modules
\[0\to W\to V\stackrel{p}\to B\to 0.\]
Suppose that $\mathrm{Tor}_1^R(W,B)=\{0\}$. Then the sequence
\[W\ot W\stackrel{i}\to (V\ot W)\oplus (W\ot V) \stackrel{j}\to V\ot V\]
\[i(w\ot w')=(w\ot w',-w\ot w');\quad j((v\ot w),(w'\ot v'))=v\ot w+w'\ot v';\]
is exact.
\end{lem}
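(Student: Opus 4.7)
The inclusion $\mathrm{Im}(i)\subseteq \Ker(j)$ is an immediate check: $j(i(w\ot w')) = \iota(w)\ot \iota(w') - \iota(w)\ot \iota(w')=0$, where $\iota:W\hookrightarrow V$ denotes the given inclusion. My plan for the reverse inclusion is a diagram chase organized around the Tor long exact sequence.

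The hypothesis $\mathrm{Tor}_1^R(W,B)=0$ says exactly that tensoring the short exact sequence $0\to W\to V\to B\to 0$ with $W$ on either side preserves exactness, so I would first record the two resulting short exact sequences
\[0\to W\ot W\xrightarrow{\iota\ot 1}V\ot W\xrightarrow{p\ot 1}B\ot W\to 0, \qquad 0\to W\ot W\xrightarrow{1\ot\iota}W\ot V\xrightarrow{1\ot p}W\ot B\to 0,\]
which identify $W\ot W$ with the kernel of each of the projections $V\ot W\to B\ot W$ and $W\ot V\to W\ot B$.

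Given $(\alpha,\beta)\in \Ker(j)$, so $(1_V\ot \iota)(\alpha)+(\iota\ot 1_V)(\beta)=0$ in $V\ot V$, I would apply the projection $p\ot 1_V:V\ot V\to B\ot V$. The term arising from $\beta$ vanishes because $p\circ\iota=0$, and the remaining term rewrites as $(1_B\ot\iota)((p\ot 1_W)(\alpha))=0$. Symmetrically, $1_V\ot p$ yields $(\iota\ot 1_B)((1_W\ot p)(\beta))=0$. The crux of the argument is to conclude $(p\ot 1_W)(\alpha)=0$ and $(1_W\ot p)(\beta)=0$; this amounts to the injectivity of $1_B\ot\iota:B\ot W\to B\ot V$ and $\iota\ot 1_B:W\ot B\to V\ot B$, which is exactly the weaker condition singled out in the remark following Theorem~\ref{reki4}, and which I would verify by a further application of the Tor long exact sequence obtained by tensoring $0\to W\to V\to B\to 0$ with $B$.

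Once that is granted, the two displayed short exact sequences furnish unique lifts $\alpha_0,\beta_0\in W\ot W$ with $(\iota\ot 1)(\alpha_0)=\alpha$ and $(1\ot\iota)(\beta_0)=\beta$. Substituting back into $j(\alpha,\beta)=0$ gives $(\iota\ot\iota)(\alpha_0+\beta_0)=0$ in $V\ot V$. But $\iota\ot\iota$ factors as $W\ot W\xrightarrow{1\ot\iota}W\ot V\xrightarrow{\iota\ot 1_V}V\ot V$, whose first factor is injective by the Tor hypothesis and whose second is injective on the image of the first by the same principle; hence $\alpha_0+\beta_0=0$, and setting $\gamma=\alpha_0=-\beta_0$ gives $(\alpha,\beta)=i(\gamma)\in \mathrm{Im}(i)$. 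The main obstacle I expect is precisely the injectivity step for $\iota\ot 1_B$: without it the desired lifts fail to exist and the chase breaks down, so the real content of $\mathrm{Tor}_1^R(W,B)=0$ must be extracted there.
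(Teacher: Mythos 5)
Your overall strategy --- project a kernel element into $B\ot V$ and $V\ot B$, deduce that its two components die in $B\ot W$ and $W\ot B$, then lift to $W\ot W$ by right-exactness --- is exactly the paper's, and the parts you carry out in detail are fine: $\mathrm{Tor}_1^R(B,W)\cong\mathrm{Tor}_1^R(W,B)=0$ does give the two short exact sequences $0\to W\ot W\to V\ot W\to B\ot W\to 0$ and $0\to W\ot W\to W\ot V\to W\ot B\to 0$, and your computation of the images under $p\ot 1_V$ and $1_V\ot p$ is correct. The genuine gap sits precisely at the step you yourself flag as the crux, and it cannot be closed the way you propose. Tensoring $0\to W\to V\to B\to 0$ with $B$ yields the exact sequence $\mathrm{Tor}_1^R(B,B)\to B\ot W\to B\ot V\to B\ot B\to 0$: the obstruction to injectivity of $1_B\ot\iota$ is the image of the connecting map out of $\mathrm{Tor}_1^R(B,B)$, not anything involving $\mathrm{Tor}_1^R(W,B)$, and the stated hypothesis says nothing about it. The vanishing of $\mathrm{Tor}_1^R(W,B)$ only controls maps \emph{out of} $W\ot W$, i.e.\ the two sequences you recorded first.

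You should know that the paper's own proof makes the identical unjustified assertion (``Using the Tor-assumption, we have a monomorphism $B\ot W\to B\ot V$''), and in fact the lemma as stated fails: take $R=\mathbf{Z}$, $W=V=\mathbf{Z}$ with $\iota$ the multiplication by $2$, so $B=\mathbf{Z}/2\mathbf{Z}$. Then $\mathrm{Tor}_1^R(W,B)=0$ since $W$ is free, yet the element $(1_V\ot 1_W,\,-1_W\ot 1_V)$ lies in $\Ker(j)$ (each summand maps to $\pm 2$ in $V\ot V\cong\mathbf{Z}$), while $\mathrm{Im}(i)$ is $2\mathbf{Z}$ times that same element; here $1_B\ot\iota:B\ot W\to B\ot V$ is the zero map $\mathbf{Z}/2\to\mathbf{Z}/2$. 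So the injectivity of $B\ot W\to B\ot V$ --- the ``weaker condition'' of the remark you cite --- is genuinely an \emph{additional} hypothesis (satisfied e.g.\ when $B$ is flat or $W$ is a direct summand), not a consequence of $\mathrm{Tor}_1^R(W,B)=0$. A secondary weak point in your write-up: in the last step you invoke injectivity of $\iota\ot 1_V:W\ot V\to V\ot V$ on the image of $1_W\ot\iota$ ``by the same principle''; the kernel of $\iota\ot 1_V$ is the image of $\mathrm{Tor}_1^R(B,V)$, again not controlled by the hypothesis, so this too would need a separate argument (the paper's closing sentence glosses over the same point).
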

\begin{proof}
The composite map is obviously zero. To prove it is exact, consider an element in the kernel of $j$; we denote it as $((v\ot w),(w'\ot v'))$ as a shorthand to mean that it is a finite sum of such elements. By assumption, we have, in $V\ot V$, $v\ot w+w'\ot v'=0$. In particular, the image in $B\ot V$ of $v\ot w$ is zero. Using the Tor-assumption, we have a monomorphism $B\ot W\to B\ot V$; it follows that the image in $B\ot W$ of $v\ot w$ is zero. Using right-exactness of the functor $-\ot W$, we deduce that $v\ot w$ belongs to the image of $W\ot W$ in $V\ot W$, say $v\ot w=w''\ot w'''$ (meaning a sum of tensors again) in $V\ot W$ (and hence in $V\ot V$). It follows that $((v\ot w),(w'\ot v'))=i(w''\ot w''')$.
\end{proof}

\begin{lem}\label{projvw}
Under the assumptions of Lemma \ref{esm}, there is a unique homomorphism $f$ from the image $(V\ot W)+(W\ot V)$ of $j$ onto $B\ot W$ mapping $v\ot w$ to $p(v)\ot w$ and $w\ot v$ to 0 for all $v\in V$, $w\in W$. 
\end{lem}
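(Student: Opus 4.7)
The approach is essentially the standard factorization trick, building $f$ by first defining a natural map on the direct sum $(V\ot W)\oplus(W\ot V)$ and then showing it descends through the surjection $j$ to its image in $V\ot V$.

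First I would define the $R$-linear map
\[\tilde{f}:(V\ot W)\oplus(W\ot V)\to B\ot W,\qquad \tilde{f}(v\ot w,w'\ot v')=p(v)\ot w.\]
This is well-defined because on the first summand $(v,w)\mapsto p(v)\ot w$ is $R$-bilinear and on the second summand we take the zero map; together they extend to an $R$-linear map on the direct sum.

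Next I would check that $\tilde f$ vanishes on $\ker(j)$. By Lemma~\ref{esm}, $\ker(j)$ equals the image of $i$, so it suffices to evaluate $\tilde f$ on a generator $i(w\ot w')=(w\ot w',-w\ot w')$ with $w,w'\in W$ (viewing $W$ as a submodule of $V$). We compute $\tilde f(w\ot w',-w\ot w')=p(w)\ot w'=0$, since $p$ vanishes on $W$. Hence $\tilde f$ factors through $\mathrm{Im}(j)=(V\ot W)+(W\ot V)$, yielding an $R$-linear map $f:(V\ot W)+(W\ot V)\to B\ot W$.

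It remains to verify the formulas and uniqueness. For $v\in V$, $w\in W$, the tensor $v\ot w$ lifts to $(v\ot w,0)$ in the direct sum, so $f(v\ot w)=\tilde f(v\ot w,0)=p(v)\ot w$; similarly $f(w\ot v)=\tilde f(0,w\ot v)=0$. (When $v$ happens to lie in $W$, both prescriptions give $p(v)\ot w=0$, so they agree on the overlap.) Uniqueness is immediate: the elements $v\ot w$ and $w\ot v$ generate $(V\ot W)+(W\ot V)$ as an $R$-module, so any $R$-linear map satisfying the prescribed values on them is determined.

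I do not expect any real obstacle here: the only nontrivial input is the exactness statement from Lemma~\ref{esm}, which is precisely what forces the naive prescription to descend from the direct sum to the sum-inside-$V\ot V$; the $\mathrm{Tor}$-hypothesis enters only through that lemma.
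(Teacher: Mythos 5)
Your proof is correct and follows exactly the paper's argument: define the map on the direct sum $(V\ot W)\oplus(W\ot V)$, observe that it kills the image of $i$ because $p$ vanishes on $W$, and invoke Lemma~\ref{esm} to identify that image with $\ker(j)$ so the map descends to $\mathrm{Im}(j)$. The paper's version is just a terser statement of the same steps.
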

\begin{proof}
Such a surjective homomorphism can obviously be uniquely defined at the level of $(V\ot W)\oplus (W\ot V)$, and vanishes on the image of $i$; by Lemma \ref{esm}, this image is exactly the kernel of $j$; hence this homomorphism factors through the image of $j$.
\end{proof}

\begin{lem}\label{lavvw}
Keep the assumptions of Lemma \ref{esm}. There exist unique homomorphisms $g,h$ from the image of $V\ot W$ in $S^2V$ and $\Lambda^2V$ respectively, to $B\ot W$, mapping $v\cc w$ (resp.\ $v\we w$) to $p(v)\ot w$ for all $v\in V$, $w\in W$.
\end{lem}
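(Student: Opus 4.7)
The plan is to introduce a symmetric (resp.\ alternating) bilinear map $V\times V\to B\ot V$ that factors through $S^2V$ (resp.\ $\Lambda^2V$) for purely formal reasons, and then to check that the induced homomorphism takes values in the submodule $B\ot W\subseteq B\ot V$ once restricted to the image of $V\ot W$. This sidesteps any direct analysis of the kernel of the natural map $V\ot W\to S^2V$ (or $\to\Lambda^2V$) and isolates the role of the Tor hypothesis.

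First I would define $\Phi:V\ot V\to B\ot V$ by $\Phi(v_1\ot v_2)=p(v_1)\ot v_2+p(v_2)\ot v_1$; visibly symmetric in $(v_1,v_2)$, it factors to a homomorphism $\bar\Phi:S^2V\to B\ot V$ satisfying $\bar\Phi(v_1\cc v_2)=p(v_1)\ot v_2+p(v_2)\ot v_1$. Evaluated on a generator $v\cc w$ of the image of $V\ot W$ in $S^2V$ (with $w\in W=\Ker p$), this gives $\bar\Phi(v\cc w)=p(v)\ot w+p(w)\ot v=p(v)\ot w$, an element of $B\ot W\subseteq B\ot V$. The required $g$ is then the restriction of $\bar\Phi$ to the image of $V\ot W$ in $S^2V$, reinterpreted as a homomorphism with target $B\ot W$.

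The alternating case is completely parallel: set $\Psi(v_1\ot v_2)=p(v_1)\ot v_2-p(v_2)\ot v_1$. Since $\Psi(v\ot v)=0$, it descends to $\bar\Psi:\Lambda^2V\to B\ot V$, and on $v\we w$ with $w\in W$ we get $\bar\Psi(v\we w)=p(v)\ot w-p(w)\ot v=p(v)\ot w\in B\ot W$; the restriction of $\bar\Psi$ yields the desired $h$. Uniqueness of both $g$ and $h$ is immediate, since the image of $V\ot W$ inside $S^2V$ (resp.\ $\Lambda^2V$) is $R$-linearly generated by the elements $v\cc w$ (resp.\ $v\we w$) with $v\in V$, $w\in W$, on which the value is prescribed.

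There is no serious obstacle in this argument; the only delicate point is that the values of $g$ and $h$ are a~priori seen in $B\ot V$, and one must know that the target really reduces to $B\ot W$. This is justified precisely by the injectivity of $B\ot W\to B\ot V$, which follows from $\mathrm{Tor}_1^R(W,B)=0$ (as already exploited in the proof of Lemma~\ref{esm}).
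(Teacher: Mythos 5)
Your proof is correct and takes a genuinely different, and in fact more economical, route than the paper's. The paper obtains $h$ by composing the antisymmetrization $\lambda:\Lambda^2V\to V^{\ot 2}$, $v\we v'\mapsto v\ot v'-v'\ot v$ (which carries the image $I$ of $V\ot W$ into $(V\ot W)+(W\ot V)$) with the map $f$ of Lemma \ref{projvw}, and that map is in turn built from the exactness statement of Lemma \ref{esm}. You instead define the symmetrized and antisymmetrized maps $V^{\ot 2}\to B\ot V$ globally, let them descend to $S^2V$ and $\Lambda^2V$ for purely formal reasons, and observe that their restrictions to $I$ land in (the image of) $B\ot W$. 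This isolates the single input that both arguments really use, namely the injectivity of $B\ot W\to B\ot V$, and shows that Lemmas \ref{esm} and \ref{projvw} are not needed for the proof of Lemma \ref{lavvw} (hence not for Theorem \ref{reki4}, whose proof invokes only $h$ and the right-exactness of $-\ot[\g,\g]$). One caveat, shared with the paper and therefore not a defect of your proposal relative to it: the deduction of the injectivity of $B\ot W\to B\ot V$ from $\mathrm{Tor}_1^R(W,B)=0$ is exactly the step taken in the proof of Lemma \ref{esm}, but the long exact sequence for $B\ot-$ applied to $0\to W\to V\to B\to 0$ identifies the kernel of $B\ot W\to B\ot V$ with a quotient of $\mathrm{Tor}_1^R(B,B)$, not of $\mathrm{Tor}_1^R(W,B)$ (for instance $R=\mathbf{Z}$, $W=2\mathbf{Z}\subset V=\mathbf{Z}$, $B=\mathbf{Z}/2\mathbf{Z}$ gives the zero map while $\mathrm{Tor}_1^R(W,B)=0$). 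The hypothesis that actually makes both arguments run is that injectivity itself, which is precisely the condition isolated in the Remark following Theorem \ref{reki4}; under it, your argument goes through verbatim.
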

\begin{proof}
Let us prove the case of $\Lambda^2V$, the proof in the symmetric case being analogous. Define $\lambda:\Lambda^2V\to V^{\ot 2}$ by $\lambda(v\we v')=v\ot v'-v'\ot v$. Then $\lambda$ maps the image $I$ of $V\ot W$ in $\Lambda^2V$ into $V\ot W+W\ot V$. Hence, given $f$ as provided by Lemma \ref{projvw}, the composite homomorphism $f\circ \lambda$ is well-defined on $I$, mapping $v\we w$ to $p(v)\ot w$ for all $v\in V$ and $w\in W$.
\end{proof}

\begin{proof}[Proof of Theorem \ref{reki4}]
Denote by $p$ the projection $\g\to\g/[\g,\g]$. Consider a 3-cycle, given as $x\we y\we z$ (i.e., a sum of such elements). By definition, we have $x\we [y,z]+y\we [z,x]+z\we [x,y]=0$. Applying the function $h$ of Lemma \ref{lavvw} (which holds by the Tor-assumption), we deduce that $p(x)\ot [y,z]+p(y)\ot [z,x]+p(z)\ot [x,y]=0$ in $(\g/[\g,\g])\ot [\g,\g]$. By right-exactness of $-\ot [\g,\g]$, we deduce that $x\ot [y,z]+y\ot [z,x]+z\ot [x,y]\in \g\ot [\g,\g]$ belongs to the image of $[\g,\g]\ot [\g,\g]$. It follows that in $\g\cc\g$, the element $x\cc [y,z]+y\cc [z,x]+z\cc [x,y]$ belongs to the image of $[\g,\g]\cc [\g,\g]$. Thus the image in $\Kill(\g)$ of $3x\cc [y,z]$ belongs to the image of $[\g,\g]\cc [\g,\g]$; in other words, $3\eta(x\we y\we z)\in\Kill^{(4)}(\g)$.
\end{proof}

\begin{prop}\label{nonredu}
Let $R$ be a commutative ring in which 2 is invertible, and suppose that $R$ is not reduced (i.e., contains a nonzero nilpotent element). Then there exists a 2-nilpotent Lie algebra over $R$ whose reduced Koszul homomorphism is nonzero.  
\end{prop}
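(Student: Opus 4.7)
My plan is to construct an explicit minimal example: a free rank-3 Lie algebra over $R$ whose bracket is an $\epsilon$-scaled version of the standard $\mathfrak{so}_3$ bracket. The key idea is that scaling all structure constants by a square-zero nilpotent element $\epsilon$ simultaneously forces $[\g,\g]\subseteq\epsilon\g$ and destroys the Tor-vanishing hypothesis of Theorem~\ref{reki4}, while keeping a genuine 3-cycle in play and supporting a detecting invariant form.

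I would first reduce to an element $\epsilon\in R$ with $\epsilon\neq 0$ and $\epsilon^2=0$: starting from any nonzero nilpotent $n$, let $k$ be the largest integer with $n^k\neq 0$ and take $\epsilon=n^k$. Then let $\g=Rx\oplus Ry\oplus Rz$ be the free $R$-module of rank 3 with bracket
\[[x,y]=\epsilon z,\quad [y,z]=\epsilon x,\quad [z,x]=\epsilon y,\]
extended $R$-bilinearly. Jacobi is immediate since $[x,[y,z]]+[y,[z,x]]+[z,[x,y]]=\epsilon([x,x]+[y,y]+[z,z])=0$. The inclusion $[\g,\g]\subseteq \epsilon\g$ yields $\g^{(3)}\subseteq \epsilon^2\g=0$, so $\g$ is 2-nilpotent.

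Next I would exhibit the 3-cycle $c=x\we y\we z$ and compute its Koszul image. Using the Chevalley--Eilenberg boundary from the proof of Lemma~\ref{vanb3},
\[\partial_3(c)=-[x,y]\we z+[x,z]\we y-[y,z]\we x=-\epsilon(z\we z+y\we y+x\we x)=0,\]
so $c\in Z_3(\g)$, and $\eta(c)=x\cc[y,z]=\epsilon(x\cc x)\in S^2\g$. To certify this is nonzero in $\Kill(\g)$, I would pair against an invariant symmetric bilinear form $B\colon\g\times\g\to R$ with $B(x,x)=B(y,y)=B(z,z)=1$ and all off-diagonal entries zero. Invariance reduces to the identity $B([x,y],z)+B(y,[x,z])=B(\epsilon z,z)+B(y,-\epsilon y)=\epsilon-\epsilon=0$ together with its cyclic permutations. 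Hence $B$ induces a functional $\bar B\colon\Kill(\g)\to R$ with $\bar B(\eta(c))=\epsilon\cdot B(x,x)=\epsilon\neq 0$, so $\bar\eta(c)\neq 0$.

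The main obstacle is finding the construction at all: the bracket must simultaneously place torsion in both $\g/[\g,\g]$ and $[\g,\g]$ (to evade Theorem~\ref{reki4}), produce a non-trivial 3-cycle, and admit an invariant form which sees $\eta(c)$. The ``$\epsilon$-times-cross-product'' structure is what makes all three happen at once: the 3-cycle relation holds because each summand in $\partial_3(c)$ is of the form $v\we v=0$ rather than by cancellation among distinct terms, and the identity quadratic form is automatically invariant because the bracket is cyclically symmetric of the shape $[e_i,e_{i+1}]=\epsilon e_{i+2}$.
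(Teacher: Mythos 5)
Your construction is exactly the Lie algebra used in the paper's own proof (the free rank-3 module with brackets $[e_i,e_{i+1}]=\epsilon e_{i+2}$ for a square-zero $\epsilon\neq 0$), with the same 3-cycle $x\we y\we z$, and every step checks out. The only difference is the final certification that $\eta(c)\neq 0$ in $\Kill(\g)$: the paper computes the degree-0 component of $\Kill(\g)$ explicitly via a grading in $(\mathbf{Z}/2\mathbf{Z})^3$, whereas you pair against the invariant symmetric form making $(x,y,z)$ orthonormal --- this is essentially the same functional, packaged more cleanly.
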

\begin{proof}
By assumption, there exists an element $t\in R\smallsetminus\{0\}$ such that $t^2=0$. Consider the Lie algebra $\g$ over $R$ whose underlying module is the free module $R^3$ with basis $(e_i)_{i\in\mathbf{Z}/3\mathbf{Z}}$, and whose law is given by the nonzero brackets $[e_i,e_{i+1}]=te_{i+2}$ for $i$ (modulo 3). It is readily seen to be indeed a 2-nilpotent Lie algebra.

Consider the element $\hat{c}=e_1\ot (e_2\we e_3)\in\g\ot\Lambda^2\g$, and let $c$ be its image in $\Lambda^3\g$. We will prove the proposition by showing that $c\in Z_3(\g)\smallsetminus \Ker(\eta)$.

Since 2 is invertible, we have $e_i\we e_i=0$ for all $i$. Thus we have $\bd_3(c)=\sum_{i\in\mathbf{Z}/3\mathbf{Z}} x_i\we tx_i=0$, so that $c\in Z_3(\g)$.

To check that $c\notin\Ker(\eta)$, let us define a grading of $\g$ in the abelian group $(\mathbf{Z}/2\mathbf{Z})^3$ with basis $(\omega_i)_{i\in\mathbf{Z}/3\mathbf{Z}}$, for which $e_i$ has weight $\omega_{i+1}+\omega_{i+2}$. Then $c$ has weight (0,0,0). So let us compute $\Kill(\g)_0$. We have $(S^2\g)_0=\bigoplus_i\g_i\cc\g_i$, a free module of rank 3 with basis $(e_i\cc e_i)_{i\in\mathbf{Z}/3\mathbf{Z}}$. Also $((\Lambda^2\g)\ot\g)_0$ is the free module on the basis $((e_i\we e_{i+1})\ot e_{i+2})_{i\in\mathbf{Z}/3\mathbf{Z}}$. We have $T((e_i\we e_{i+1})\ot e_{i+2})=t(e_i\cc e_i-e_{i+1}\cc e_{i+1})$. Thus, in the above basis of $(S^2\g)_0$, the image of $T$ is the set of triples $(u,v,w)\in (tR)^3$ such that $u+v+w=0$. On the other hand, we have $\check{\eta}(\hat{c})=e_0\cc [e_1,e_2]=te_1\cc e_1$, which is $(t,0,0)$ in the above basis, and thus does not belong to the image of $T$. Thus $\bar{\eta}(c)\neq 0$.
\end{proof}

Let us now indicate a counterexample over a field of characteristic 3.

We begin with a general classical definition: let $R$ be an arbitrary commutative ring and let $V,W$ be $R$-modules with an alternating bilinear map $f:V\times V\to W$. Define a Lie algebra structure on $M\oplus N$ by $[(v,w),(v',w')]=(0,f(v,v'))$, and denote it by $\g(V,W,f)$. Note that it is 2-nilpotent, with a natural grading in $\{1,2\}$.

Now we choose $V=V(R)$ to be the free $R$-module of rank 7, with basis $(e_i)_{-3\le i\le 3}$; we endow it with the resulting grading in $\{-3,\dots,3\}\subset\mathbf{Z}$. We choose $W=V$, and define $f$ as follows:
\begin{center}
\begin{tabular}{c||c|c|c|c|c|c|c|}
  & $e_{-3}$ & $e_{-2}$ & $e_{-1}$ & $e_0$ & $e_1$ & $e_2$ & $e_3$ \\
  \hline\hline
$e_{-3}$ &  &  &  & $-e_{-3}$ & $e_{-2}$ & -$e_{-1}$ & $e_{0}$\\
  \hline
$e_{-2}$ &  &  & $-e_{-3}$ & $e_{-2}$ &  & $-e_{0}$ & $e_{1}$\\
  \hline
  $e_{-1}$ &  & $e_{-3}$ &  & $e_{-1}$ & $-e_{0}$ &  & $-e_{2}$\\
  \hline
  $e_{0}$ & $e_{-3}$ & $-e_{-2}$ & $-e_{-1}$ &  & $e_{1}$ & $e_{2}$ & $-e_{3}$\\
  \hline
  $e_{1}$ & $-e_{-2}$ &  & $e_{0}$ & $-e_{1}$ &  & $e_{3}$ & \\
  \hline
  $e_{2}$ & $e_{-1}$ &  $e_{0}$ & & $-e_{2}$ & $-e_{3}$ & &\\
  \hline
  $e_{3}$ & $-e_{0}$ & $-e_{1}$ & $e_{2}$ & $e_{3}$ & &&\\
  \hline\end{tabular}
\end{center}
(for instance $f(e_1,e_2)=e_3$, $f(e_1,e_3)=0$, etc.) This follows the definition of the ``octonion product" given by R.~Wilson in \cite{Wil}.
In addition, define a symmetric bilinear form $b$ on $V$ by $b(e_i,e_j)=1$ if $i+j=0$ and 0 otherwise. As mentioned in \cite{Wil}, the trilinear form $t$ defined by $t(x,y,z)=b(f(x,y),z)$ is alternating.

Although this is not important here, note that $(V(R),f)$ is a Lie algebra when $3=0$ in $R$, but not otherwise. 

We now consider the 2-nilpotent Lie algebra $\g(V,V,f)$. The symmetric bilinear form defined by $B((v_1,v_1'),(v_2,v'_2))=b(v_1,v'_2)+b(v_2,v'_1)$ is thus invariant. Define $E_i=(e_i,0)$ and $F_i=(0,e_i)$, so that $\g(V,V,f)$ is a free module of rank 14 over the basis $((E_i)_{-3\le i\le 3},(F_i)_{-3\le i\le 3})$

Now consider the 3-chain
\[c=E_0\we (E_1\we E_{-1}+E_2\we E_{-2}+ E_{-3}\we E_3)-E_1\we E_2\we E_{-3}-E_{-1}\we E_{-2}\we E_3.\]
Then $\eta(c)$ is represented by $3e_0\cc f_0-e_1\cc f_{-1}-e_{-1}\cc f_1$, and thus $B(\eta(c))=1$, so $\eta(c)\neq 0$ in $\Kill(\g)$.

On the other hand, $\bd_3(c)=3e_0\we f_0$. In particular, if $3=0$ in $R$ then $c$ is a 3-cycle and hence the reduced Koszul map of $\g(V,V,f)$ is nonzero. We have proved:

\begin{prop}\label{char3nonz}
For every field of characteristic 3, there exists a 2-nilpotent Lie algebra whose reduced Koszul map is nonzero.
\end{prop}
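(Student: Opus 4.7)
The plan is to take the 14-dimensional 2-nilpotent Lie algebra $\g = \g(V,V,f)$ already constructed above, together with the explicit 3-chain
\[c = E_0 \we (E_1 \we E_{-1} + E_2 \we E_{-2} + E_{-3} \we E_3) - E_1 \we E_2 \we E_{-3} - E_{-1} \we E_{-2} \we E_3,\]
and to establish two claims: (i) in characteristic $3$ one has $c \in Z_3(\g)$, and (ii) $\eta(c) \neq 0$ in $\Kill(\g)$. Together these exhibit a nonzero class in the image of the reduced Koszul map of a 2-nilpotent Lie algebra, proving the proposition.

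For (i), observe that since $\g$ is 2-nilpotent with $[\g,\g] \subseteq \{0\} \oplus V$, every bracket $[E_i, E_j]$ equals $F_{f(e_i,e_j)}$, so each $\bd_3(E_i \we E_j \we E_k)$ can be read directly off the $f$-table. The crucial entries are the three diagonal-style values $f(e_1, e_{-1}) = f(e_2, e_{-2}) = f(e_{-3}, e_3) = e_0$, which each contribute an $F_0 \we E_0$ term from the respective summand $E_0 \we E_i \we E_{-i}$ of $c$. A straightforward tabulation shows that all off-diagonal terms cancel in pairs against the contributions of $-E_1 \we E_2 \we E_{-3}$ and $-E_{-1} \we E_{-2} \we E_3$, leaving $\bd_3(c) = 3\, E_0 \we F_0$, which vanishes precisely in characteristic $3$.

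For (ii), the invariant symmetric bilinear form $B$ defines a linear functional $\Kill(\g) \to R$, so it suffices to show $B(\eta(c)) \neq 0$. Writing $\eta(x \we y \we z) \equiv x \cc [y,z]$ and expanding termwise, the three diagonal-style summands of $c$ each contribute $E_0 \cc F_0$, while the last two contribute $-E_1 \cc F_{-1}$ (using $[E_2, E_{-3}] = F_{-1}$) and $-E_{-1} \cc F_1$ (using $[E_{-2}, E_3] = F_1$); thus $\eta(c) \equiv 3\, E_0 \cc F_0 - E_1 \cc F_{-1} - E_{-1} \cc F_1$. Pairing with $B$ and using $b(e_i, e_j) = \delta_{i+j,0}$ yields $B(\eta(c)) = 3 - 1 - 1$, which is nonzero modulo $3$.

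The main obstacle is purely computational bookkeeping: every term of $\bd_3(c)$ and of $\eta(c)$ has to be expanded using the octonion-style table for $f$, and a single misread entry such as $f(e_1, e_{-1}) = e_0$ (easy to confuse with a blank cell) would destroy the delicate cancellation. The structural heart of the argument is that exactly three diagonal products land on $e_0$, producing the coefficient $3$ in both $\bd_3(c) = 3\, E_0 \we F_0$ and $B(\eta(c)) = 3 - 2$; this is precisely why characteristic $3$ is the boundary case in which $c$ becomes a cycle while its Koszul image survives.
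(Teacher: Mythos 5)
Your proposal is correct and is essentially the paper's own argument: the same Lie algebra $\g(V,V,f)$, the same 3-chain $c$, the same computation $\bd_3(c)=3\,E_0\we F_0$ (so $c$ is a cycle exactly in characteristic $3$), and the same evaluation $B(\eta(c))=3-1-1=1\neq 0$ certifying that $\eta(c)\neq 0$ in $\Kill(\g)$. Nothing is missing.
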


\section{Vanishing for graded Lie algebras}\label{vagra}

\begin{thm}\label{thmkos}
Assume that 6 is invertible in the ground commutative ring. Let $\g$ be graded in an abelian group $A$ endowed with an injective group homomorphism $\phi$ into a subfield of the ground ring. Then the Koszul homomorphism of $\g$ is concentrated in degree 0.
\end{thm}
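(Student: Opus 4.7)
Plan. Fix $\alpha\in A$ with $\alpha\neq 0$ and set $t=\phi(\alpha)$; by injectivity of $\phi$, $t$ is a nonzero element of the subfield $K\subset R$, hence invertible. I will construct an $R$-linear map $\omega\colon(\Lambda^2\g)_\alpha\to\Kill(\g)$ that plays the role of a primitive for $\eta$, in the sense that
\[
\eta(c)+\omega(\bd c)\equiv 0\ \text{ in }\Kill(\g) \quad\text{for every homogeneous }c\in(\Lambda^3\g)_\alpha.
\]
Applied to $3$-cycles $c\in Z_3(\g)_\alpha$ (for which $\bd c=0$), this yields $\eta(c)\equiv 0$, which is the content of the theorem.

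For homogeneous $a,b\in\g$ with $\deg a+\deg b=\alpha$, set
\[
\omega(a\we b)\;=\;t^{-1}\bigl(\phi(\deg a)-\phi(\deg b)\bigr)\,[a\cc b]\;\in\;\Kill(\g),
\]
where $[\,\cdot\,]$ denotes the class in $\Kill(\g)$, and extend $R$-linearly. The scalar changes sign under $a\leftrightarrow b$ while $a\cc b$ is symmetric, so $\omega$ is antisymmetric and descends to $\Lambda^2\g$. Equivalently, $\omega(a\we b)=t^{-1}[Da\cc b-a\cc Db]$, where $D$ is the grading derivation of $\g$.

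To establish the identity, I would reduce to a monomial $c=x\we y\we z$ of total weight $\alpha$, with weights $a,b,d\in K$ (so $a+b+d=t$). Expanding $\bd c=-[x,y]\we z+[x,z]\we y-[y,z]\we x$ and applying $\omega$ to each summand, the cyclic equivalence
\[
x\cc[y,z]\equiv y\cc[z,x]\equiv z\cc[x,y]\equiv\eta(c)\quad\text{in }\Kill(\g)
\]
---an immediate consequence of the defining relations $T((u\we v)\ot w)\equiv 0$--- reduces each of the three terms to a scalar multiple of $\eta(c)$. Collecting gives
\[
\omega(\bd c)\equiv -t^{-1}\bigl[(a+b-d)+(a+d-b)+(b+d-a)\bigr]\eta(c)=-t^{-1}\cdot t\cdot\eta(c)=-\eta(c),
\]
as required. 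For $c\in Z_3(\g)_\alpha$ one has $\bd c=0$ and therefore $\eta(c)\equiv 0$ in $\Kill(\g)$.

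The main obstacle is discovering the formula for $\omega$: both the weight-difference coefficient $\phi(\deg a)-\phi(\deg b)$ and the normalization $t^{-1}$ are forced by the arithmetic cancellation $(a+b-d)+(a+d-b)+(b+d-a)=a+b+d$ needed in the identity. Once the definition is in hand, the verification is a short grading-wise computation relying only on the $T$-relations in $\Kill(\g)$. Invertibility of $\phi(\alpha)$ is essential; the hypothesis ``$6$ invertible'' is not itself used in this argument and is presumably retained for uniformity with other results in the section.
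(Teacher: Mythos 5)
Your proof is correct and is in substance the same computation as the paper's: your map $\omega$ is, up to a sign and the normalization $t^{-1}$, the paper's operator $\Phi$ (multiplication by $\phi(j-i)$ on $\g_i\ot\g_j$) pushed down to $\Kill(\g)$, and your identity $\eta\equiv-\omega\circ\bd_3$ on $(\Lambda^3\g)_\alpha$ is exactly the degree-$\alpha$ instance of the paper's relation $3\pi p\Phi\hat{\bd_3}=3\Psi\eta$. The only real difference is packaging: by working in $\Kill(\g)$ from the outset, where the relations $[x,y]\cc z\equiv x\cc[y,z]$ and hence the cyclic invariance of $\eta$ are already available, you dispense with the explicit correction term $(T'+T'')U$, with the comparison of $\hat{\bd_3}$ to $2\bd_3$, and with the final division by $3\phi(\alpha)$ --- so that, as you observe, only the invertibility of $\phi(\alpha)$ is actually used, a slight sharpening of the paper's argument.
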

\begin{proof}
Since 2 is invertible, we have natural embeddings of both $\g\we\g$ and $\g\cc\g$ into $\g\ot\g$. Let $s$ be the flip of $\g\ot\g$ (namely $s(x\ot y)=y\ot x$).

Define $\bd'_3,\bd''_3:\g\we\g\we\g\to \g\ot\g$ by 
\[\bd'_3(x\we y\we z)=x\ot [y,z]+y\ot [z,x]+z\ot [x,y];\qquad\bd''_3=s\circ \bd'_3.\] 

Also define graded module endomorphisms $\Phi,\Psi:\g\ot\g\to\g\ot\g$ as being the multiplication by $\phi(j-i)$, resp.\ $\phi(i+j)$ on $\g_i\ot\g_j$.

Define $T',T'':\g\ot\g\ot\g\to\g\ot\g$ by
\[T'(x\ot y\ot z)=x\ot [y,z]-y\ot [z,x],\qquad T''=s\circ T'.\] Let $\iota$ be the projection $\g^{\ot 3}\to\Lambda^3\g$.

We claim that $3\Phi(\bd'_3-\bd''_3)\iota=\Psi(\bd'_3+\bd''_3)\iota+(T'+T'')U$ for some operator $U:\g\ot\g\ot\g\to\g\ot\g$ respecting the grading in $A$.

Indeed, for all $i,j,k\in A$ and all $x_i\in\g_i$, $y_j\in\g_j$, $z_k\in\g_k$, we have, defining $c=x_i\we y_j\we z_k$ (for convenience, we view $\phi$ as an inclusion)
\begin{align*}
3\Phi \bd'_3(c) =& 3\Phi(x_i\ot [y_j,z_k]+y_j\ot [z_k,x_i]+z_k\ot [x_i,y_j])\\
=& (j+k-i)3x_i\ot [y_j,z_k]\\ &+(k+i-j)3y_j\ot [z_k,x_i]+(i+j-k)3z_k\ot [x_i,y_j];\\
\Psi \bd'_3(c)= & (i+j+k)(x_i\ot [y_j,z_k]+y_j\ot [z_k,x_i]+z_k\ot [x_i,y_j]);\\
(3\Phi -\Psi) \bd'_3(c) = & 2\big((j+k-2i)x_i\ot [y_j,z_k]\\ &+(k+i-2j)y_j\ot [z_k,x_i]+(i+j-2k)z_k\ot [x_i,y_j]\big);
\end{align*}
writing $y_j\ot [z_k,x_i]=x_i\ot [y_j,z_k]-T'(x_i\ot y_j\ot z_k)$ and
$z_k\ot [x_i,y_j]=x_i\ot [y_j,z_k]+T'(z_k\ot x_i\ot y_j)$, we deduce
\[(3\Phi -\Psi) \bd'_3(c) = -2(k+i-2j)T'(x_i\ot y_j\ot z_k)+2(i+j-2k)T'(z_k\ot x_i\ot y_j);\]
hence if we define $U(x_i\ot y_j\ot z_k)=-2(k+i-2j)x_i\ot y_j\ot z_k+2(i+j-2k)z_k\ot x_i\ot y_j$, we have proved that $(3\Phi \bd'_3-\Psi \bd'_3)\iota=T'U$.
Note that $s\Phi=-\Phi s$ and $s\Psi=\Psi s$. Hence
\[T''U=sT'U=s(3\Phi -\Psi) \bd'_3\iota=-(3\Phi +\Psi) s \bd'_3\iota=-(3\Phi +\Psi)\bd''_3\iota;\]
thus $(3\Phi (\bd'_3-\bd''_3)-\Psi (\bd'_3+\bd''_3))\iota=(T'+T'')U$, and the claim is proved.

 Write $\hat{\bd_3}=\bd'_3-\bd''_3$ and $\hat{\eta}=\bd'_3+\bd''_3$. The previous equality can be rewritten as $(3\Phi\hat{\bd_3}-\Psi\hat{\eta})\iota=(T'+T'')U$. Consider the projections $\g^{\ot 2}\stackrel{p}\to S^2\g\stackrel{\pi}\to\Kill(\g)$.
  
Clearly $\pi p$ vanishes on $\mathrm{Im}(T')+\mathrm{Im}(T'')$, so we deduce $\pi p(3\Phi\hat{\bd_3}-\Psi\hat{\eta})\iota=0$. Since $\iota$ is surjective, it follows that $\pi p(3\Phi\hat{\bd_3}-\Psi\hat{\eta})=0$.

Denoting
\[\check{\eta}'(x\we y\we z)=x\cc [y,z]+y\cc [z,x]+z\cc [x,y],\]
we obviously have $\check{\eta}'=p\hat{\eta}$. Also note that we can view $\Psi$ (unlike $\Phi$) as a graded self-operator on every graded module, given by scalar multiplication by $\phi(i)$ on the $i$-th component. Thus defined, it commutes with all graded homomorphisms. Thus $\pi p\Psi\eta=\Psi\pi\check{\eta}'$; hence $3\pi p\Phi\hat{\bd_3}=\Psi\pi\check{\eta}'$. Since $\pi\check{\eta}'=3\eta$, this can be rewritten as
\begin{equation}3\pi p\Phi\hat{\bd_3}=3\Psi\eta.\label{3psi}\end{equation}

Now let $c$ be a 3-cycle. Denoting by $p'$ the projection $\g^{\ot 2}\to\Lambda^2\g$, we have $p'\hat{\bd_3}=2\bd_3$, and since the projection $p'$ is injective on antisymmetric tensors and hence on the image of $\hat{\bd_3}$, we deduce that $\hat{\bd_3}(c)=0$.
Using (\ref{3psi}), it follows that $3\Psi\eta(c)=0$. If $c$ is homogeneous of nonzero weight $\alpha$, the latter is equal to $3\phi(\alpha)\eta(c)$. Since $3\phi(\alpha)$ is invertible in the ground ring, we deduce that $\eta(c)=0$.
\end{proof}

\begin{cor}\label{nstar2}
If the ground ring is a commutative $\mathbf{Q}$-algebra and $\g$ admits a grading in $\mathbf{N}^*$, then the reduced Koszul map of $\g$ vanishes.

If $p=3m+1\ge 5$ is prime, the ground ring is a commutative $\mathbf{Z}/p\mathbf{Z}$-algebra and $\g$ admits a grading in $\{1,2,\dots,m\}$, then the reduced Koszul map of $\g$ vanishes.
\end{cor}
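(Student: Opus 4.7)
The plan is to deduce both parts of the corollary as direct consequences of Theorem \ref{thmkos}, by identifying, in each case, an abelian group $A$ through which the grading of $\g$ factors, together with an injective homomorphism $\phi:A\hookrightarrow R$ into a subfield, arranged so that every homogeneous element of $\Lambda^3\g$ has nonzero weight.

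For the first part, the hypothesis that $R$ is a $\mathbf{Q}$-algebra guarantees that $6$ is invertible in $R$ and that the inclusion $\mathbf{Z}\hookrightarrow\mathbf{Q}\subseteq R$ serves as $\phi$, with $A=\mathbf{Z}$. Since $\g$ is concentrated in degrees $\geq 1$, the tensor cube $\g^{\ot 3}$ is concentrated in degrees $\geq 3$, hence in nonzero degrees; the same holds after passing to $\Lambda^3\g$. Theorem \ref{thmkos} then forces $\eta$ to vanish on all of $\Lambda^3\g$, so a fortiori the reduced Koszul map $\bar\eta$ vanishes on $H_3(\g)$.

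For the second part, I would take $A=\mathbf{Z}/p\mathbf{Z}$ together with the canonical embedding $\phi:\mathbf{Z}/p\mathbf{Z}\hookrightarrow R$; this lands in a subfield since $R$ is a $\mathbf{Z}/p\mathbf{Z}$-algebra, and $p\geq 5$ ensures that $6$ is invertible in $R$. The verification that needs attention is that the original $\{1,\dots,m\}$-grading of $\g$ descends to a genuine Lie algebra grading by $\mathbf{Z}/p\mathbf{Z}$: this uses that brackets $[\g_i,\g_j]$ lie in $\g_{i+j}$ (in $\mathbf{Z}$) and that $i+j\leq 2m<p$ for $i,j\in\{1,\dots,m\}$, so no wrap-around occurs and the reduction modulo $p$ is injective on the set of weights of $\g$ and on the set of weights of $\g\otimes\g$. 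The weights appearing in $\g^{\otimes 3}$ then lie in $\{3,\dots,3m\}=\{3,\dots,p-1\}$, all nonzero modulo $p$, and Theorem \ref{thmkos} finishes the argument exactly as in the first part.

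I do not anticipate any real obstacle here; the only subtlety is the arithmetic identity $3m=p-1$, which has been built into the hypothesis precisely to prevent any weight of $\Lambda^3\g$ from collapsing to $0\in\mathbf{Z}/p\mathbf{Z}$, and is what allows the proof of the characteristic-zero case to be imported without change.
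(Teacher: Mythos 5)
Your proof is correct and follows essentially the same route as the paper: both parts are immediate applications of Theorem \ref{thmkos} once the grading is transported into the prime subfield of the ground ring ($\mathbf{Q}$ in the first case, $\mathbf{F}_p$ in the second, where $3m=p-1$ keeps all relevant weights nonzero). The only cosmetic difference is that you discard the leftover degree $0$ by noting that $(\Lambda^3\g)_0=0$ (no sum of three weights vanishes), whereas the paper notes instead that $(S^2\g)_0=0$, so that the degree-$0$ component of the target $\Kill(\g)$ is already zero.
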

\begin{proof}
In both cases, we obtain a grading in the ground ring, for which 0 is not the sum of 2 weights; hence $(S^2\g)_0=0$ and thus the only weight left by Theorem \ref{thmkos} is discarded. 
\end{proof}

\begin{rem}
Let $K$ be any field. Then $\mk{sl}_2(K)$ admits a grading in $\mathbf{Z}/2\mathbf{Z}$ for which the reduced Koszul map is nonzero in degree 1, namely the grading induced by the involution $x\mapsto -x^{\intercal}$. More generally (say for $K=\mathbf{C}$), a cyclic permutation of the factors in $\mk{sl}_2(K)^n$ provides a grading in $\mathbf{Z}/n\mathbf{Z}$ for which the reduced Koszul map is nonzero in every degree. Thus, in Theorem \ref{thmkos} when $R$ is a $\mathbf{Q}$-algebra, the fact that the grading is in a torsion-free abelian group is essential.
\end{rem}

\begin{proof}[Proof of Corollary \ref{rcomgr}]
We follow the proof of Theorem \ref{vagra}, but we need to carefully redefine the involved maps, especially their domain of definition and target space.
For $k\ge 1$, define $\overline{\g^{\ot k}}$ as the submodule of $\prod_{n\in\mathbf{Z}}(\g^{\ot k})_n$ consisting of sequences with support bounded below. Note that it naturally contains $\overline{\g}^{\ot k}$ (with equality for $k=1$).
Then $\Phi,\Psi$ (as defined in the proof of Theorem \ref{vagra}) are well-defined as self-operators of $\overline{\g^{\ot 2}}$, and $U$ is well-defined as an operator from $\overline{\g^{\ot 3}}$ to $\overline{\g^{\ot 2}}$. Also, since $T,T''$ are graded operators, they naturally extend to operators from $\overline{\g^{\ot 3}}$ to $\overline{\g^{\ot 2}}$. Eventually, the formula $3\pi p\Phi\hat{\bd_3}=3\Psi\eta$ is proved along the same lines, and we also obtain the conclusion that $\eta(c)=\eta(c_0)$ for every 3-cycle $c=\sum_{n\in\mathbf{Z}} c_n$ with $c_n\in Z_3(\g)_n$.
\end{proof}

I do not know if Corollary \ref{rcomgr} can directly follow from Theorem \ref{vagra} (see the closely related Remark \ref{etaprod}).

\section{A few families and small-dimensional nilpotent quadrable Lie algebras}\label{small}

For the moment, we work over an algebraically closed field $K$ of characteristic $\neq 2$.

\subsection{The regular quadratic Lie algebras $\mk{w}(\lambda)$} We recall a general well-known construction, appearing in \cite{FS}, which provides most of the small-dimen\-sional regular quadratic nilpotent Lie algebras: namely those double extensions (see \S\ref{rqla}) of an orthogonal space (viewed as an abelian regular quadratic Lie algebra).

Since the ground field is algebraically closed, the orthogonal spaces are determined, up to isomorphism, by their dimension $n$, while the nilpotent skew-symmetric endomorphisms are classified, up to orthogonal conjugation, by partitions of $n$ in which every even number occurs with even multiplicity (see \cite{CM}). It is convenient here to rather consider the equivalent data of partitions of $n$ in which no number congruent to 2 modulo 4 occurs (gathering the even numbers pairwise). Given such a partition of $n$, given as $n=a_1+\dots+a_p+b_1+\dots+b_q$, with each $a_i$ odd and each $b_j$ multiple of 4, a representative for the pair $(V,D)$ is given by a space with basis
\[ e_{ij},f_{k\ell}, \quad 1\le i\le p,\quad 1 \le j\le a_i,\quad 1\le k\le q,\quad 1\le \ell\le b_{q},\] and $D$ defined by
\[De_{ij}=(-1)^ie_{i,j+1}, \quad 1\le j\le a_i-1; \quad Df_{k,\ell}=(-1)^{k}f_{k,\ell+2}, \quad 1\le \ell\le b_{k}-2,\] and $De_{i,a_i}=Df_{k,b_{k}}=Df_{k,b_{k}-1}=0$;
and the nonzero scalar products on the basis are
\[\la e_{ij},e_{i,a_i+1-j}\ra=\la f_{k\ell},f_{k,b_k+1-\ell}\ra=1.\]
Thus the nonzero brackets are, denoting $x=(1,0,0)$ and $z=(0,0,1)$ 
\[[x,e_{ij}]=(-1)^ie_{i,j+1},\quad [x,f_{ij}]=(-1)^if_{i,j+2}\]
\[ [e_{ij},e_{i,a_i-j}]=(-1)^iz,\quad 1\le j\le a_i-1; \quad  [f_{k\ell},f_{k,b_k-1-\ell}]=(-1)^iz,\quad 1\le \ell\le b_k-2\]

 We denote the corresponding Lie algebra as $\mk{w}(a_1\oplus\dots \oplus b_\ell)$; its nilpotency length is $\max(\max_ia_i,\max_k b_k/2)$; also we write $[k]n$ for $n\oplus\dots\oplus n$ ($k$ times).

The $(kn+2)$-dimensional Lie algebra $\g=\mk{w}([k]n)$ is Carnot with Carnot-grading given as follows: all basis elements are homogeneous, $x$ has degree 1, and:
\begin{itemize}
\item if $n$ is odd: $e_{ij}$ has degree $j$, $z$ has degree $n$;
\item if $n$ is even: $e_{ij}$ has degree $\lceil j/2\rceil$, $z$ has degree $n/2$.
\end{itemize}

For an arbitrary partition, the Lie algebra is not necessarily Carnot; still it admits several gradings. The simplest one is in $\{0,1,2\}$, with $x$ in degree 0, all $e_{ij}$ in degree 1, and $z$ in degree 2.

There also exist gradings in $\mathbf{N}^*$, given as follows: fix a large enough integer $r$ (namely $\ge a_i-1$ and $\ge b_k/2-1$ for all $i,k$):
\begin{itemize}
\item let $x$ have degree 2 and $z$ have degree $2r$;
\item let $e_{ij}$ have degree $r-a_i+2j$;
\item let $f_{k\ell}$ have degree $r-b_k/2+2\lceil\ell/2\rceil$.
\end{itemize}

The regular quadratic Lie algebras $\mk{w}(\lambda)$, where $\lambda$ is a partition of $n$ as above (with no integer congruent to 2 modulo 4), are characterized in \cite{MPU} as the nilpotent ones for which the ``dup number" is nonzero: this number reflects a degeneracy property of the alternating 3-form associated to the choice of bilinear form (they show it is actually a property of the underlying Lie algebra). 

\subsection{The regular quadratic Lie algebras $\mk{X}(3m-1)$ and $\mk{Y}(3m)$}

The smallest complex nilpotent quadrable Lie algebra not among the $\mk{w}(\lambda)$ (``deeper" in the terms of \cite{Pel}, ``ordinary" in the terms of \cite{MPU}) has dimension 8.

We introduce, for convenience, the Lie algebra $\mkh$ with basis $(X_{n,i})$, where $n$ ranges over $\mathbf{Z}$ and $i$ ranges over $\{0,1,-1\}$; the law being given by the nonzero brackets
\[[X_{n,i},X_{m,j}]=(i-j)X_{n+m,i+j},\quad m,n\in \mathbf{Z},\;(i,j)\in\{0,1,-1\}^2\smallsetminus\{\pm(1,1)\}.\]

(The reader may recognize a current algebra of $\mk{sl}_2$.) Given $r\in\mathbf{Z}$, define a symmetric bilinear bracket by the nonzero scalar products
\[\langle X_{n,i},X_{m,j}\rangle_r=(1-3|i|)\delta_{n+m,r}\delta_{i+j,0}.\]

Then it is invariant: indeed, we have 
\[\langle X_{n,i},[X_{m,j},X_{p,k}]\rangle_r=(j-k)(1-3|i|)\delta_{n+m+p,r}\delta_{i+j+k,0},\]
and we need to check that the right-hand term is invariant under cyclic permutations of $(i,j,k)$. If $i+j+k\neq 0$ this is clear. Otherwise, we have two possibilities: either $(i,j,k)=(0,0,0)$ and again this is clear, or $\{i,j,k\}=\{0,1,-1\}$. We see that if $(i,j,k)$ is a cyclic permute of $(0,1,-1)$ (resp.\ of $(0,-1,1)$), then $(j-k)(1-3|i|)$ is equal to 2 (resp.\ -2). Hence the scalar product is invariant. Note that $(n,i)$ provides a grading in $\mathbf{Z}^2$.

If we rewrite $X_{n,i}=T_{3n+i}$, we see that $(T_n)$ is a basis, and that $[T_n,T_m]=e(m,n)T_{n+m}$, with $e$ the skew-symmetric map factoring through $(\mathbf{Z}/3\mathbf{Z})^2$ characterized by: $e(1,0)=1$, $e(1,-1)=2$, $e(-1,0)=-1$. Moreover, we have $\la T_n,T_m\ra_{r}=\ell(n)\delta_{n+m,3r}$, where $\ell(n)=1$ if $n\in 3\mathbf{Z}$ and $\ell(n)=2$ otherwise.

Note that if we consider the subalgebra $\mkh_{\ge 1}$ generated by the $T_n$ for $n\ge 1$, the kernel of the scalar product $\la\cdot,\cdot\ra_{k}$ is the ideal $\mkh_{\ge 3k}$ with basis the $T_n$ for $n\ge 3k$. In particular, if we define $\mathcal{X}(3k-1)$ as the $(3k-1)$-dimensional quotient $\mkh_{\ge 1}/\mkh_{\ge 3k}$, it is a regular quadratic Lie algebra when endowed with $\la\cdot,\cdot\ra_{3k}$. It is Carnot-graded of nilpotency length $2k-1$, with $T_{3i-2}$ and $T_{3i-1}$ of degree $2i-1$ and $T_{3i}$ of degree $2i$.

Note that $\mk{X}(2)$ is abelian and $\mk{X}(5)\simeq\mk{w}(3)$; this is the only exceptional isomorphism with this family, as it is easily checked that for $3k-1\ge 8$, the Lie algebra $\mk{X}(3k-1)$ admits no nontrivial grading in $\{0,1,2\}$.

(Up to an easy change of variables, the regular quadratic Lie algebra $\mk{X}(3k-1)$ appears in \cite[\S 5.3]{FS}, and reappears as $\mathcal{A}_{1,3k-1}$ in \cite{Pel}, where the solvable Lie algebra $\mk{h}_{\ge 0}/\mkh_{\ge 3k+1}$ is studied as a regular quadratic Lie algebra under the bracket $\la\cdot,\cdot\ra_{k}$.)

Also, if we consider the subalgebra $\mkh_{\ge 2}$, the kernel of the scalar product $\langle\cdot,\cdot\rangle_k$ is the ideal $\mkh_{\ge 3k+1}$. Thus the quotient is a $(3k)$-dimensional regular quadratic Lie algebra which we denote by $\mk{Y}(3k)$, with basis $(T_2,\dots,T_{3k+1})$, which is also $(X_{m,i})$ for $1\le m\le k$ and $-1\le i\le 1$. 
The Lie algebra $\mathfrak{Y}(3k)$ is Carnot-graded of nilpotency length $k$, with $X_{ni}$ of degree $n$ (thus every homogeneous component of the Carnot-grading has dimension 3). We see that $\mk{Y}(3)$ is abelian, $\mk{Y}(6)\simeq\mk{w}(4)$; again, there is no other isomorphism with the $\mk{w}(\lambda)$.

Note that unlike the family of $\mk{w}(\lambda)$ which are all solvable of length 3 (indeed center-by-metabelian), the Lie algebras $\mk{X}(3k-1)$ and $\mk{Y}(3k)$ have unbounded solvability length.

\subsection{Small-dimensional nilpotent quadrable Lie algebras}

Let us now list the nilpotent quadrable Lie algebras of dimension $\le 9$ over the complex numbers (or any algebraically closed ground field of characteristic zero, by a standard argument); we follow Kath \cite{Kat}. (The classification in dimension $\le 7$ is due to Favre and Santharoubane \cite{FS}.) However the classification in \cite{Kat} is over the real numbers and does not make the correspondence with the above Lie algebras explicit, so the list we 
provide looks somewhat different; moreover I had to check that Kath's classification, which is made over real numbers, does not miss any complex quadratic Lie algebra. 

Recall that the class of quadrable Lie algebra is invariant under taking direct products with abelian 1-dimensional Lie algebras, and hence we can stick to those Lie algebras with no such direct product decomposition, or equivalently whose center is contained in the derived subalgebra (we call such Lie algebras {\em essential}). 

For the Lie algebras $\mk{w}(\lambda)$, the maximal abelian direct factors have dimension the number of occurrences of 1 in the partition $\lambda$; thus essential means that 1 does not occur in $\lambda$.

Up to dimension 9, the essential Lie algebras among the $\mk{w}(\lambda)$ are:

\begin{itemize}
\item dimension 5: $\mk{w}(3)$;
\item dimension 6: $\mk{w}(4)$;
\item dimension 7: $\mk{w}(5)$;
\item dimension 8: $\mk{w}(3\oplus 3)$;
\item dimension 9: $\mk{w}(7)$ and $\mk{w}(3\oplus 4)$ (the latter is not Carnot).
\end{itemize}

Now let us describe the remaining essential quadrable Lie algebras, relying on \cite{Kat}.

In dimension 8, the second one is $\mk{X}(8)$ (denoted 6(b) in \cite{Kat}, while $\mk{w}(3\oplus 3)$ is 7(b) there). (Note that it follows from this description that all quadrable nilpotent Lie algebras of dimension $\le 8$ are Carnot.)

In dimension 9, there are 5 isomorphism classes, including the two previous ones (3(a) and 5(c) in \cite{Kat}), and three more. One is $\mk{Y}(9)$ (denoted 7(c) in \cite{Kat}).

The second one (4(c) in \cite{Kat}) is similar to the $\mk{w}(\lambda)$ in the sense that it also admits a grading in $\{0,1,2\}$ with abelian 0-component and central 2-component.

It has a basis $(X_1,\dots,X_9)$ with grading given by the indices, with nonzero brackets $[X_i,X_j]=a_{i,j}X_{i+j}$ given by $a_{1,2}=-1$, $a_{2,3}=1$, $a_{1,3}=1$, $a_{1,6}=-1$, $a_{3,6}=1$,  $a_{2,5}=-1$, $a_{3,5}=1$, $a_{2,7}=-1$, $a_{1,7}=1$, the scalar product being given by $\la X_i,X_j\ra=1$ if $i+j=10$ and 0 otherwise. Its nilpotency length is 5 and actually it has a grading in $\{1,\dots,5\}$ given (in self-explanatory notation) by $12\oplus 3\oplus 456\oplus 7\oplus 89$; another grading is given by the indices. The $\{0,1,2\}$-grading is given by $26\oplus 13579\oplus 48$.
 It is not Carnot because the associated Carnot Lie algebra is not quadrable: indeed its derived subalgebra has codimension 3 while its center has dimension 4 (indeed $X_6$ becomes central, indeed generates a direct factor, in the associated Carnot Lie algebra).

The third example appears as a twisted version of $\mk{w}(7)$ (both appear as 3(a) in \cite{Kat}). To describe it, let us first describe again $\mk{w}(7)$ with convenient notation: it admits the basis $(Y_i)_{i\in\{1,\dots,11\}\smallsetminus\{2,10\}}$, with scalar product $\la Y_i,Y_j\ra=1$ if $i+j=12$ and 0 otherwise, and the nonzero brackets being $[Y_1,Y_i]=(-1)^iY_{i+1}$ and $[Y_i,Y_{11-i}]=(-1)^iY_{11}$ for $3\le i\le 8$. 

The twisted version $\mk{w}(7)^{\textnormal{tw}}$ is defined on the same basis by ``adding" the nonzero brackets $[Y_3,Y_4]=Y_7$, $[Y_3,Y_5]=-Y_8$, and $[Y_4,Y_5]=Y_9$. Then $\mk{w}(7)^{\textnormal{tw}}$ also has nilpotency length 7, but is not isomorphic to $\mk{w}(7)$: a way to see this is by observing that the second derived subalgebra of $\mk{w}(7)^{\textnormal{tw}}$ is 2-dimensional (equal to the center, generated by $Y_9,Y_{11}$), while in $\mk{w}(7)$ it is reduced to the line generated by $Y_{11}$.

\begin{thm}\label{everyni}
For every nilpotent Lie algebra $\g$ over a field of characteristic 0, of dimension $\le 9$, the reduced Koszul map vanishes.
\end{thm}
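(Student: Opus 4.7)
The plan is to argue by contradiction, reducing the general statement to the finite list of essential nilpotent quadrable complex Lie algebras of dimension $\le 9$ described just above the theorem, and exhibiting for each of them a grading in $\mathbf{N}^*$.

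Suppose some nilpotent Lie algebra $\g$ of dimension $\le 9$ over a field $K$ of characteristic zero has nonzero reduced Koszul map. By Lemma \ref{reducomp}, we may assume without loss of generality that $K=\mathbf{C}$ (this preserves the dimension, and nilpotency length is not even needed here). By Lemma \ref{somq}, some quotient $\mkh$ of $\g$ is quadrable and has nonzero reduced Koszul map; $\mkh$ is still nilpotent and of dimension $\le 9$. Splitting off abelian direct factors does not affect the Koszul map (it only enlarges $\Kill$ by a direct summand coming from the abelian part, and the reduced Koszul map on a product splits according to the factors), so we may further assume $\mkh$ is essential, i.e.\ has its center contained in its derived subalgebra.

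The essential nilpotent quadrable complex Lie algebras of dimension $\le 9$ have been enumerated just above (combining the discussion of $\mk{w}(\lambda)$, $\mk{X}(8)$, $\mk{Y}(9)$, and Kath's classification \cite{Kat}): they are $\mk{w}(3)$, $\mk{w}(4)$, $\mk{w}(5)$, $\mk{w}(3\oplus 3)$, $\mk{X}(8)$, $\mk{w}(7)$, $\mk{w}(3\oplus 4)$, $\mk{Y}(9)$, the Lie algebra labelled 4(c) in \cite{Kat}, and the twisted version $\mk{w}(7)^{\textnormal{tw}}$. For each of these, I will exhibit a grading in $\mathbf{N}^*$, after which Corollary \ref{nstar} forces the reduced Koszul map of $\mkh$ to vanish, contradicting the assumption.

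For every $\mk{w}(\lambda)$, such a grading was described explicitly in \S\ref{small} (using the parameter $r$); the Carnot Lie algebras $\mk{X}(8)$ and $\mk{Y}(9)$ have their natural Carnot grading in $\mathbf{N}^*$; the algebra 4(c) of Kath admits the grading in $\{1,\dots,5\}$ given in \S\ref{small} (or simply the grading by indices $1,\dots,9$). Finally, for $\mk{w}(7)^{\textnormal{tw}}$, the grading of $\mk{w}(7)$ by the index $i\in\{1,\dots,11\}\smallsetminus\{2,10\}$ on each generator $Y_i$ is still a Lie algebra grading after adding the brackets $[Y_3,Y_4]=Y_7$, $[Y_3,Y_5]=-Y_8$, $[Y_4,Y_5]=Y_9$, since $3{+}4=7$, $3{+}5=8$, $4{+}5=9$; this provides the desired $\mathbf{N}^*$-grading. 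The main routine obstacle was completeness of the classification and the verification for $\mk{w}(7)^{\textnormal{tw}}$, which turns out to respect the same grading as $\mk{w}(7)$.
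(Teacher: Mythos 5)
Your proposal is correct and follows essentially the same route as the paper: reduce to $\mathbf{C}$ via Lemma \ref{reducomp}, pass to a quadrable (and then essential) quotient via Lemma \ref{somq}, and check from Kath's classification that every algebra on the resulting list admits a grading in $\mathbf{N}^*$, so that Corollary \ref{nstar} applies. The only difference is that you spell out the grading verification case by case (including the observation that $\mk{w}(7)^{\textnormal{tw}}$ inherits the index grading of $\mk{w}(7)$), which the paper leaves implicit in its description of the examples.
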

\begin{proof}
Clearly, we can suppose that $K=\mathbf{C}$ (see Lemma \ref{reducomp}). By Lemma \ref{somq}, we can suppose that $\g$ is quadrable. The way we introduced the above examples shows that in all cases, $\g$ admits a positive grading. Hence we can apply Corollary \ref{nstar}.
\end{proof}

In dimension 10, a case-by-case verification based on the classification in \cite{Kat} seems to also yield the same conclusion: again, every 10-dimensional quadrable Lie algebra admits a grading in the positive integers.

\section{A nilpotent Lie algebra with nonzero reduced Koszul map}\label{12dim}

We construct here a nilpotent Lie algebra with a nonzero reduced Koszul map. 

We fix a field $R=K$ of characteristic not 2 (the same working over any commutative ring in which $2$ is invertible). We use the notion of double extension recalled in \S\ref{rqla}.

We start from the 7-dimensional Lie algebra $\mk{w}(5)$ defined in \S\ref{small}, but rename the basis as $(Y_1,Y_4,Y_5,Y_6,Y_7,Y_8,Y_{11})$, so that the scalar product is defined by: $\la Y_i,Y_j\ra=1$ if $i+j=12$ and 0 otherwise, and the nonzero brackets are:
\begin{equation}[Y_1,Y_i]=(-1)^iY_{i+1},\; 4\le i\le 7;\qquad [Y_4,Y_7]=-[Y_5,Y_6]=Y_{11}.\label{eqw33}\end{equation}

We define a 10-dimensional regular quadratic Lie algebra $\mkh$ as the orthogonal direct product of $\mk{w}(5)$ with a 3-dimensional abelian quadratic Lie algebra with basis $(Z_3,Z_6,Z_9)$ with nonzero scalar products given by $\la Z_3,Z_9\ra=\la Z_6,Z_6\ra=1$. 

We now define a skew-symmetric derivation of $\mkh$ by
\[DY_1=Y_4,\;DY_4=Y_7+Z_3,\;DY_5=-Y_8,\; DY_8=-Y_{11}\]
\[DZ_3=Z_6,\;DZ_6=-Z_9,\; DZ_9=-Y_8,\quad DY_6=DY_7=DY_{11}=0;\]

Finally, we define the 12-dimensional regular quadratic Lie algebra $\g$ as the double extension of $\mkh$ by $D$, in which we write $E_3=e$, $E_9=f$.

Thus $\g$ has the basis $(E_3,E_9,Y_1,Y_4,\dots,Y_8,Y_{11},Z_3,Z_6,Z_9)$ with the nonzero scalar products 
\[\la E_i,E_{12-i}\ra=\la Y_i,Y_{12-i}\ra=\la Z_i,Z_{12-i}\ra=1,\]
the nonzero brackets being those in (\ref{eqw33}) along with
\[[E_3,Y_1]=Y_4,\;[E_3,Y_4]=Y_7+Z_3,\;[E_3,Y_5]=-Y_8,\;[E_3,Y_8]=-Y_{11},\]
\[[E_3,Z_3]=Z_6,\;[E_3,Z_6]=-Z_9,\;[E_3,Z_9]=-Y_8\]
\[[Y_1,Y_8]=[Y_4,Y_5]=[Z_3,Z_6]=[Y_4,Z_9]=E_9\]

We now consider the 3-chains 
\[c_1=E_3\we Y_1\we Y_8,\; c_2=E_3\we Y_4\we Y_5,\; c_3=E_3\we Y_4\we Z_9,\; c_4=Y_1\we Y_4\we Y_7,\]\[\; c_5=E_3\we Z_3\we Z_6,\; c_6=Y_1\we Y_6\we Y_5,\; c_7=Y_1\we Y_4\we Z_3,\; c_8=E_3\we Z_6\we Y_7,\]
\[c=2c_1+4c_2-3c_3+c_4-3c_5-3c_6+4c_7+3c_8.\]
If we define the 2-chains
\[e_{3}=E_3\we E_9,\; y_{1}=Y_1\we Y_{11},\; y_{4}=Y_4\we Y_8,\;y_{5}=Y_5\we Y_7\;, z_{3}=Z_3\we Z_9\] 
and consider the 7-dimensional subspace $V$ of $\Lambda^2\g$ with basis $\mathcal{B}=(e_{3},y_{1},y_{4},y_{5},z_{3},Z_3\we Y_5,Z_9\we Y_7)$, then a computation shows that $\bd_3(c_i)\in V$ for all $i=1,\dots,8$, and the matrix of $\bd_3$ with respect to the bases $(c_1,\dots,c_8)$ and $\mathcal{B}$ is given by 

\[\begin{pmatrix}
1 & 1 & 1 & 0 & 1 & 0 & 0 & 0\\
1 & 0 & 0 & 1 & 0 & 1 & 0 & 0 \\
-1& 1 & 1 & 1 & 0 & 0 & 0 & 0 \\
0 & 1 & 0 &-1 & 0 & 1 & 0 & 0 \\
0 & 0 &-1 & 0 & 1 & 0 & 0 & 0 \\
 0&-1 & 0 & 0 & 0 & 0 & 1 & 0 \\
0 & 0 & 1 & 0 & 0 & 0 & 0 & 1 
      \end{pmatrix}\]

Computation shows that the vector $(2,4,-3,1,-3,-3,4,3)$ belongs to the kernel of this matrix. This means that $\bd_3(c)=0$. 

On the other hand, we have $J(c_i)=1$ for $i\le 6$ and $J(c_7)=J(c_8)=0$. Thus $J(c)=2+4-3+1-3-3=-2\neq 0$.

It follows that $J$ does not vanish on 2-cycles, and thus the reduced Koszul map $\g$ is nonzero: in homology, the 3-homology class $c$ has a nonzero image; in cohomology, the 3-cocycle $J$ is not exact. 

To summarize, let us state:

\begin{thm}
The above 12-dimensional nilpotent Lie algebra $\g$ has a nonzero reduced Koszul map. More precisely, the 3-form associated with the given scalar product is not exact.
\end{thm}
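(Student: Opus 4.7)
The plan is to verify the assertion by an explicit computation, following the construction already laid out. First I would check that $D$ is genuinely a skew-symmetric derivation of $\mkh = \mk{w}(5) \oplus (K Z_3 \oplus K Z_6 \oplus K Z_9)$; skew-symmetry amounts to checking that $\langle Dx, y\rangle + \langle x, Dy\rangle = 0$ on each pair of basis elements, and the derivation property (Leibniz rule) only needs to be verified on pairs of generators whose bracket is nonzero, i.e.\ among the $Y_i$'s (the $Z_j$'s commute with everything in $\mkh$). Once $D$ is confirmed skew-symmetric and a derivation, the general double extension construction recalled in \S\ref{rqla} automatically yields a regular quadratic Lie algebra $\g$ with the stated bracket and scalar product; the nilpotency of $\g$ follows because $D$ is a nilpotent endomorphism of $\mkh$ (a direct calculation shows $D^7 = 0$ on $\mkh$).

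Next I would record $\partial_3(c_i)$ for each of the eight 3-chains $c_i$, using the Chevalley--Eilenberg differential $\partial_3(u \we v \we w) = -[u,v]\we w + [u,w] \we v - [v,w]\we u$. A key point is that each $\partial_3(c_i)$ must be verified to lie in the 7-dimensional subspace $V \subset \Lambda^2 \g$ spanned by $\mathcal{B}$; all other contributions (e.g.\ terms involving $E_9$ on the right, or mixed terms like $E_3 \we Y_{11}$) must cancel individually. Having assembled the $8 \times 7$ matrix of $\partial_3$ displayed in the excerpt, I would then verify by direct linear algebra that the coefficient vector $(2, 4, -3, 1, -3, -3, 4, 3)$ lies in its kernel. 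Hence $c = 2c_1 + 4c_2 - 3c_3 + c_4 - 3c_5 - 3c_6 + 4c_7 + 3c_8 \in Z_3(\g)$.

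To conclude, I would evaluate $J = \eta^*(B)$, where $B$ is the canonical scalar product, on $c$. Recall $J(x, y, z) = \langle x, [y,z]\rangle$. The pairing rules $\langle E_3, E_9\rangle = \langle Y_i, Y_{12-i}\rangle = \langle Z_i, Z_{12-i}\rangle = 1$ together with the bracket table show immediately that $J(c_i) = 1$ for $i = 1, \ldots, 6$ while $J(c_7) = J(c_8) = 0$ (since $[Y_4, Z_3]$ and $[Z_6, Y_7]$ both vanish). Thus $J(c) = 2 + 4 - 3 + 1 - 3 - 3 = -2 \neq 0$. Since $J$ is closed (being associated to an invariant symmetric form) and fails to vanish on the cycle $c$, its cohomology class is nonzero; equivalently, $\bar\eta(c) \neq 0$ in $\Kill(\g)$.

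The main obstacle is purely computational: one must be meticulous with signs and with the bookkeeping of which ``off-$V$'' terms of $\partial_3(c_i)$ cancel (this is where the precise integer coefficients of $c$ are forced). Beyond that, the argument is mechanical, and one gains confidence by running an independent computer algebra check (as acknowledged in the paper).
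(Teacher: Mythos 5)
Your proposal is correct and follows essentially the same route as the paper: verify that $c=2c_1+4c_2-3c_3+c_4-3c_5-3c_6+4c_7+3c_8$ is a 3-cycle by computing the matrix of $\partial_3$ on the $c_i$ with respect to the basis $\mathcal{B}$ of $V$ and checking the kernel vector, then evaluate $J(c)=-2\neq 0$. The only addition you make (checking that $D$ is a skew-symmetric nilpotent derivation) is a sensible preliminary that the paper leaves implicit.
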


\begin{rem}
It can easily be checked that $\Kill(\g)$ is 5-dimensional. The dual $\Sym^2(\g)^\g$ can be described as the direct sum $W_1\oplus W_2\oplus W_3$, where $W_1$ is the 3-dimensional subspace consisting of symmetric bilinear forms on $S^2(\g/[\g,\g])$, $W_2$ is the line generated by a certain invariant scalar product on the 5-dimensional quotient $\g/\g^{(4)}$ and $W_3$ is the line generated by the scalar product. It is easy to see that the elements not in the hyperplane $W_1+W_2$ are precisely the non-degenerate forms. Also, using that $\g/\g^{(4)}$ is graded in $\mathbf{N}^*$ (or by a simple direct computation), the elements of $W_1+W_2$ have a zero image in $H^3(\g)$: thus the reduced Koszul map has rank~1 as a linear map. 
\end{rem}

\begin{rem}
The Lie algebra $\g$ has a grading in $\mathbf{Z}/4\mathbf{Z}$ given by the indices, namely for which $\g_0$ has basis $\{Y_4,Y_8\}$, $\g_{1}$ has basis $\{Y_1,Y_5,Z_9,E_9\}$, $\g_{-1}$ has basis $\{E_3,Z_3,Y_7,Y_{11}\}$ and $\g_2$ has basis $\{Y_6,Z_6\}$.
\end{rem}

\begin{rem}
The above Lie algebra is center-by-metabelian: its second derived subalgebra, namely the plane generated by $X$ and $X'$, is central.

It seems from \cite{Kat} that all nilpotent complex Lie algebras of dimension $\le 10$ are center-by-metabelian; however the 11-dimensional quadrable Lie algebra $\mk{X}(11)$ is not center-by-metabelian.
\end{rem}

\begin{rem}
In a computer verification, Louis Magnin checked the above computation, and also computed the Betti numbers of $\g$: $(1,2,4,9,15,22,26,22\dots)$. Moreover, he checked that $\g$ is characteristically nilpotent in the sense that every self-derivation is nilpotent; equivalently, $\g$ admits no nontrivial grading in $\mathbf{Z}$.
\end{rem}

\begin{rem}
The above verification does not indicate how I came up with this Lie algebra. Actually, I originally obtained it not using double extensions, but using another device due to Kath and Olbrich \cite{KaO}, allowing to directly pass from some dimension smaller than $n-2$ to dimension $n$. Given a Lie algebra $\mkl$ and an orthogonal $\mkl$-module $\mka$, and given a 2-cycle $\alpha\in Z^2(\mkl,\mka)$ and a 3-chain $\gamma\in C^3(\mkl)$ such that $2d\gamma=\la\alpha\we\alpha\ra$, we can construct a certain regular quadratic Lie algebra $\mkd_{\alpha,\gamma}(\mkl,\mka)$ whose underlying space is $\mkl\oplus\mka\oplus\mkl^*$. 

In the above example, $\mkl$ is a 5-dimensional Lie algebra with basis $(E_3,Y_1,Y_4,Z_3,Y_5)$ (free 3-nilpotent on 2 generators $E_3$, $Y_1$) with nonzero brackets $[E_3,Y_1]=Y_4$, $[E_3,Y_4]=Z_3$, $[Y_1,Y_4]=Y_5$, $\mka$ is a 2-dimensional trivial $\mkl$-module with orthonormal basis $(Z_6,Y_6)$, $\gamma=E_3^*\we Y_4^*\we Y_5^*$, and $\alpha=(E_3^*\we Z_3^*)\otimes Z_6-(Y_1^*\we Y_5^*)\ot Y_6$ (here $d\gamma=0=\langle \alpha\we\alpha\ra$). We refer to \cite{KaO} (or \cite{Kat}) for precise definitions, which we avoided here by rather describing $\g$ as a double extension.
\end{rem}

\section{Solvable Lie algebras}\label{solvable}

\subsection{A solvable Lie algebra with nonzero reduced Koszul map} 

Consider the 9-dimensional regular quadratic Lie algebra defined on the basis $(x,y,z,y',x',$ $u^1,u^{-1},v^1,v^{-1})$ with scalar product defined as
\[\la x,x'\ra=\la y,y'\ra=\la z,z\ra=\la u^1,u^{-1}\ra=\la v^1,v^{-1}\ra=1\]
(other scalar products being zero) and nonzero brackets 
\[[x,y]=z,\;[z,x]=y',\;[y,z]=x',\]
\[[x,u^e]=eu^e,\;[y,v^e]=ev^e,\quad e=\pm 1,\]
\[[u^1,u^{-1}]=x',\;[v^1,v^{-1}]=y';\]
it is convenient to endow it with the Cartan grading in $\mathbf{Z}^2$ for which $u^e$ has degree $(e,0)$, $v^e$ has degree $(0,e)$, and the other five generators have degree $(0,0)$. It is center-by-metabelian, namely its second derived subalgebra is generated by the central elements $x',y'$.

Note that $[h,u^e]=e\la h,x'\ra u^e$ for all $h\in\g_0$ and $e=\pm 1$. 
The Jacobi identity can be checked in total degree $(1,0)$ by
\begin{align*}
\mathrm{jac}(h,h',u^1)= & [h,[h',u^1]]-[h',[h,u^1]]+[u^1,[h,h']]\\
		= & \la h,x'\ra\la h',x'\ra u^1 - \la h',x'\ra\la h,x'\ra u^1+0=0;
\end{align*}
and by symmetry this also holds in every total degree $(e,0)$ or $(0,e)$ for $e=\pm 1$. In total degree zero, it holds when all three elements have degree zero because we recognize the free 3-nilpotent Lie algebra on 2 generators, and otherwise by symmetry it boils down to
\begin{align*}\mathrm{jac}(u^{-1},h,u^1)= & [u^{-1},[h,u^1]]-[u^1,[h,u^{-1}]]+[h,[u^1,u^{-1}]]\\
=& \la h,x'\ra [u^{-1},u^1]+\la h,x'\ra[u^1,u^{-1}]+[h,x']=0.
\end{align*}

We also need to check that the scalar product is invariant. Note that it has degree zero for the Cartan grading, so again we only need to check in total degree zero. When all three elements have degree zero, this is a standard fact about the 5-dimensional regular quadratic Lie algebra $\g_0$. Otherwise, it boils down by symmetry to
\[\la u^{1},[u^{-1},h]\ra=\la u^1,\la h,x'\ra u^{-1}\ra=\la x',h\ra=\la [u^{1},u^{-1}],h\ra.\]

So $\g$ is indeed a regular quadratic Lie algebra. It is 3-solvable (actually center-by-metabelian).

Define the 3-chain
\[c=x\we y\we z-u^1\we u^{-1}\we x-v^{1}\we v^{-1}\we y.\]
Then $c$ is a 3-cycle: indeed we have 
\[\bd_2(x\we y\we z)=x\we x'+y\we y';\]
\[\bd_2(u^{1}\we u^{-1}\we x)=x\we x',\;\bd_2(v^{1}\we v^{-1}\we y)=y\we y'.\]

Again consider the alternating trilinear map $J:(x,y,z)\mapsto\la [x,y],z\ra$. Then
\[J(c)=\la z,z\ra-\la x',x\ra-\la y',y\ra=-1.\]
Thus $\kappa(c)\neq 0$ (and $J$ is a non-exact 3-cocycle). To summarize:

\begin{thm}
The above 9-dimensional regular quadratic Lie algebra $\g$ is center-by-metabelian, and its associated alternating 3-form is non-exact; in particular $\g$ has a nonzero reduced Koszul map.
\end{thm}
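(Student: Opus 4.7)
The plan is to verify the three claims of the theorem by exploiting the explicit construction just given. Since the Jacobi identity and invariance of $\la\cdot,\cdot\ra$ have already been checked case-by-case via the Cartan $\mathbf{Z}^2$-grading, what remains is to confirm that $\g$ is center-by-metabelian and to produce a $3$-cycle $c$ on which the form $J$ does not vanish.

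For the center-by-metabelian property, I would read off from the bracket table that
\[ [\g,\g] = Kz + Kx' + Ky' + Ku^{1} + Ku^{-1} + Kv^{1} + Kv^{-1}. \]
Within this derived subalgebra, the only surviving nontrivial brackets are $[u^{1},u^{-1}] = x'$ and $[v^{1},v^{-1}] = y'$, since $z$, $x'$ and $y'$ commute with every basis vector in $[\g,\g]$, and the $u^{e}$'s commute with all the $v^{e'}$'s. Hence $D^{2}\g = Kx' + Ky'$, and a second inspection of the table confirms that $x'$ and $y'$ are central.

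For the nonvanishing of $\bta$, I would take the explicit $3$-chain
\[ c = x\we y\we z - u^{1}\we u^{-1}\we x - v^{1}\we v^{-1}\we y. \]
A short boundary computation, using that every bracket landing in $D^{2}\g$ is central, shows that the contributions in $\bd_{3}c$ cancel in pairs (the terms $\pm x\we x'$ from $x\we y\we z$ and $u^{1}\we u^{-1}\we x$ annihilate, similarly for $y\we y'$), so $c \in Z_{3}(\g)$. Then
\[ J(c) = \la z,z\ra - \la x',x\ra - \la y',y\ra = 1 - 1 - 1 = -1. \]
Since the invariant nondegenerate symmetric form $\la\cdot,\cdot\ra$ induces a linear functional $f$ on $\Kill(\g)$ satisfying $f\circ\bta = J$ on $Z_{3}(\g)$ (this is the defining adjunction between $\eta$ and $\eta^{*}$), we obtain $f(\bta(c)) = J(c) \neq 0$. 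Therefore $\bta(c) \neq 0$, so the reduced Koszul map is nonzero; equivalently, $J = \eta^{*}(\la\cdot,\cdot\ra)$ is a non-exact $3$-cocycle.

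The main obstacle — once the example is in hand — is the sheer bookkeeping of signs and brackets in the boundary and invariance computations. The Cartan grading is the decisive device that tames this: it forces all nontrivial Jacobi and invariance checks to sit in total degree zero, where the verification reduces either to the identity inside the free $3$-nilpotent Lie algebra on two generators $x,y$ (the core of $\g_{0}$) or to the obvious symmetric pairings of weight $\pm 1$ eigenvectors.
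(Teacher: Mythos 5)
Your proposal is correct and follows essentially the same route as the paper: the same $3$-cycle $c=x\we y\we z-u^1\we u^{-1}\we x-v^{1}\we v^{-1}\we y$, the same boundary cancellation giving $c\in Z_3(\g)$, and the same evaluation $J(c)=-1$ combined with the duality between $\eta$ and $\eta^*$ to conclude non-exactness. (The parenthetical appeal to centrality of $D^2\g$ is not what makes the boundary terms cancel --- that is just the direct computation $[u^1,x]\we u^{-1}-[u^{-1},x]\we u^1=0$ from the eigenvalue structure --- but this does not affect the validity of the argument.)
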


\subsection{Smaller-dimensional solvable Lie algebras}

We now check that the above is optimal in dimension. 

\begin{thm}\label{soldi}
Every solvable Lie algebra of dimension $\le 8$ over a field of characteristic zero has a zero reduced Koszul map.
\end{thm}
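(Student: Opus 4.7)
The plan is to follow the template already used for Theorem \ref{everyni}. First, by Lemma \ref{tenso} one reduces to the case $K=\mathbf{C}$. Then Lemma \ref{somq} allows me to replace $\g$ with a regular quadratic quotient, which is still solvable of dimension at most $8$; and since an abelian direct factor does not affect the reduced Koszul map, I may assume $\g$ is essential. So it suffices to prove the vanishing of $\bar{\eta}$ for every essential solvable regular quadratic complex Lie algebra of dimension $\le 8$.

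Such Lie algebras are classified in Kath's paper \cite{Kat}, giving a finite list to inspect. The nilpotent entries are covered by Theorem \ref{everyni}. For the remaining, genuinely non-nilpotent, entries Lie's theorem tells us that $[\g,\g]$ is nilpotent, and the Fitting decomposition of the adjoint action of a Cartan subalgebra provides a grading of $\g$ in a torsion-free abelian group (the group generated by the weights of the semisimple parts). Either this grading can be arranged to lie in $\mathbf{N}^*$, in which case Corollary \ref{nstar} applies directly, or one gets a genuine $\mathbf{Z}$- or $\mathbf{Z}^r$-grading and Theorem \ref{vanish} forces $\bar{\eta}$ to be concentrated in degree~$0$.

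The remaining task is to rule out a nonzero contribution in degree $0$, case by case along the classification. The $9$-dimensional example in Section \ref{solvable} shows that this is subtle: there $\bar{\eta}$ is supported in degree $0$ of a natural Cartan $\mathbf{Z}^2$-grading, built on a $5$-dimensional quadratic nilpotent $\g_0$. The main obstacle, accordingly, is a dimension-counting argument against Kath's list: any analogous configuration would need at least a two-dimensional weight lattice (so at least $4$ dimensions in the nonzero weight spaces, coming in isotropic dual pairs forced by invariance of the form) together with a quadratic nilpotent degree-$0$ part of dimension $\ge 5$ supporting the relevant $3$-cycle, consuming strictly more than $8$ dimensions in total. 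Once this is verified for each non-nilpotent entry, the degree-$0$ cycles either lie in $\Lambda^3\g_0$ with $\g_0$ a nilpotent Lie algebra of dimension $\le 8$ (so Theorem \ref{everyni} applies), or are exact by direct computation, completing the proof.
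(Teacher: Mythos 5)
The reduction steps at the start of your proposal (to $K=\mathbf{C}$, to a regular quadratic quotient via Lemma \ref{somq}, and to degree $0$ of the Cartan grading via Theorem \ref{vanish}) match the paper. But there are two genuine gaps after that point.

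First, you invoke Kath's classification \cite{Kat} as a finite list of \emph{solvable} regular quadratic Lie algebras of dimension $\le 8$ to inspect. Kath classifies \emph{nilpotent} metric Lie algebras only; no classification of the non-nilpotent solvable ones is available in the cited literature, and the paper explicitly arranges its proof so as not to rely on any such classification (it uses \cite{Kat} only through Theorem \ref{everyni}, i.e.\ in the nilpotent case).

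Second, and more seriously, the degree-$0$ analysis --- which is the entire content of the theorem once Theorem \ref{vanish} has been applied --- is not actually carried out. Your dimension count assumes that a nonzero degree-$0$ contribution requires a two-dimensional weight lattice (hence $\ge 4$ dimensions outside $\g_0$) \emph{plus} a non-abelian quadratic nilpotent $\g_0$ of dimension $\ge 5$, but you give no justification for the first requirement, and without it the count fails: a single pair of weights $\pm\alpha$ with $\dim\g_{\pm\alpha}=1$ and $\g_0\simeq\mk{w}(3)$ or $\mk{w}(4)$ or $\mk{w}(3)\times\mk{a}$ gives total dimension $7$ or $8$. This is precisely the hardest case, and it is where the paper does real work: it shows (using Lemma \ref{ifso}, the relation $[\g_\alpha,\g_{-\alpha}]=K\alpha^\vee$ with $\alpha^\vee\in\Ker\alpha$, and the explicit structure of the three possible $\g_0$'s) that $\g$ admits a grading in $\{0,1,2,3\}$ whose $0$-component is \emph{abelian}, and then applies Corollary \ref{gradnn} to reduce to the abelian case. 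Note the paper's mechanism is not to analyze degree-$0$ cycles of the Cartan grading directly, but to manufacture a different, $\mathbf{N}$-valued grading with abelian bottom piece; nothing in your proposal supplies a substitute for this step, and ``exact by direct computation'' is not an argument one can check. The companion case where $\g_0$ is abelian of codimension up to $6$ also needs an argument (Lemma \ref{g0ab}), since degree-$0$ cycles can involve vectors of nonzero weight, as the $9$-dimensional counterexample of Section \ref{solvable} shows.
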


The proof does not rely on classification (except in the already settled nilpotent case), although it roughly describes the possible structures; in all cases, we produce a grading in $\mathbf{N}$ whose 0-component is abelian and apply Corollary \ref{gradnn}.

First observe that this theorem immediately reduces to the case of an algebraically closed field $K$, as we now assume. Also, by Lemma \ref{somq}, we can suppose that $\g$ is a finite-dimensional regular quadratic Lie algebra.

To prove this theorem, we recall that for any finite-dimensional Lie algebra $\g$ over $K$, a Cartan subalgebra is a nilpotent subalgebra equal to its normalizer; all Cartan subalgebras are conjugate \cite[VII.\S 2]{Bou}. The Cartan grading (relative to a Cartan subalgebra $\mk{h}$) is a grading $\g=\bigoplus\g_\alpha$, where the weights live in $\Hom(\mkh,K)$ (Lie algebra homomorphisms), characterized by the fact that for every $h\in\mk{h}$, the subspace $\g_\alpha$ is contained in the characteristic subspace of $\g$ for the operator $\mathrm{ad}(h)$ and the eigenvalue $\alpha(h)$; we have $\g_0=\mk{h}$.

From now on, $\g$ is endowed with a Cartan grading. An easy observation is that any invariant scalar product has degree 0 (i.e., $\la\g_\alpha,\g_\beta\ra=0$ whenever $\alpha+\beta\neq 0$). In particular, the given scalar product induces a duality between $\g_\alpha$ and $\g_{-\alpha}$ for all $\alpha$. In particular, the codimension of the Cartan subalgebra $\g_0$ is even. (See Astrakhancev \cite{As} for a systematic study of regular quadratic Lie algebras based on a Cartan grading.)

For each nonzero weight $\alpha$ (in the sense that $\g_\alpha\neq 0$), define the {\em coweight} $\alpha^\vee$ as the unique element in $\g_0$ such that $\alpha(h)=\la \alpha^\vee,h\ra$ for all $h\in\g_0$. Note that $\alpha$ vanishes on $[\g_0,\g_0]$, and thus $\alpha^\vee$ is orthogonal to $[\g_0,\g_0]$, which in turn implies that all coweights belong to the center of $\g_0$.

\begin{lem}\label{ifso}
The nilpotent radical of $\g$ is contained in the intersection of all the kernels of weights (each weight being viewed as a linear form on $\g$ vanishing on $\g_\beta$ for every $\beta\neq 0$). In particular, if $\g$ is solvable, then $[\g,\g]\cap\g_0\subset\Ker\,\alpha$ for every weight $\alpha$.
\end{lem}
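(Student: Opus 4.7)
My plan is to prove the first (general) assertion by a trace argument on the generalized weight spaces $\g_\alpha$, and then deduce the ``in particular'' clause from Lie's theorem. Denote by $\mk{n}$ the nilpotent radical of $\g$ and fix a weight $\alpha$. Since $\mk{n}$ is an ideal, it is stable under $\mathrm{ad}(h)$ for every $h\in\mkh=\g_0$, and hence it splits compatibly with the Cartan grading: $\mk{n}=\bigoplus_\beta(\mk{n}\cap\g_\beta)$. Because $\alpha$ vanishes on $\g_\beta$ for $\beta\neq 0$ by the stated convention, showing $\alpha(\mk{n})=0$ reduces to showing $\alpha(n)=0$ for every $n\in\mk{n}\cap\g_0$.

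The key auxiliary observation is that every element $n$ of $\mk{n}$ acts ad-nilpotently on all of $\g$ (not just on $\mk{n}$): indeed $\mathrm{ad}(n)(\g)\subset\mk{n}$ since $\mk{n}$ is an ideal, and iterating gives $\mathrm{ad}(n)^k(\g)\subset\mk{n}^{(k)}$, which vanishes for $k$ large because $\mk{n}$ is nilpotent. Now fix $n\in\mk{n}\cap\g_0$. The weight space $\g_\alpha$ is $\mathrm{ad}(n)$-invariant, and by the definition of $\g_\alpha$ as a generalized eigenspace the restricted operator has $\alpha(n)$ as its only eigenvalue, so
\[\mathrm{tr}\big(\mathrm{ad}(n)|_{\g_\alpha}\big)=\dim(\g_\alpha)\cdot\alpha(n).\]
On the other hand this trace must vanish, since the restriction of a nilpotent endomorphism of $\g$ to an invariant subspace is again nilpotent. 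As $\g_\alpha\neq\{0\}$ by definition of a weight and we are in characteristic zero, we conclude $\alpha(n)=0$.

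The ``in particular'' statement follows immediately: when $\g$ is solvable over a field of characteristic zero, Lie's theorem ensures that $[\g,\g]$ is a nilpotent ideal, hence contained in $\mk{n}$; intersecting with $\g_0$ yields $[\g,\g]\cap\g_0\subset\mk{n}\cap\g_0\subset\Ker\,\alpha$ for every weight $\alpha$. The only mildly delicate step is the grading of $\mk{n}$, which is a standard instance of the fact that any subspace stable under the commuting family $\{\mathrm{ad}(h):h\in\mkh\}$ decomposes according to the generalized weight decomposition of $\g$; no serious obstacle arises.
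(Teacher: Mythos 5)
Your proof is correct, but it follows a genuinely different route from the paper's. The paper argues contrapositively and without any trace: if $x\notin\Ker(\alpha)$, then $\mathrm{ad}(x)$ acts invertibly on the generalized eigenspace $\g_\alpha$ (its only eigenvalue there is $\alpha(x)\neq 0$), so the ideal generated by $x$ contains $[x,\g_\alpha]=\g_\alpha$ and, since bracketing with $x$ never kills $\g_\alpha$, that ideal cannot be nilpotent; hence $x$ is not in the nilpotent radical. You instead prove directly that every $n\in\mk{n}\cap\g_0$ is ad-nilpotent on all of $\g$ (via $\mathrm{ad}(n)^k(\g)\subset\mk{n}^{(k)}$) and then kill $\alpha(n)$ by computing $\mathrm{tr}\,(\mathrm{ad}(n)|_{\g_\alpha})=\dim(\g_\alpha)\,\alpha(n)=0$. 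Both work in the paper's setting; the paper's version is slightly leaner in that it needs neither characteristic zero (your trace step requires $\dim(\g_\alpha)\neq 0$ in $K$) nor the compatibility of $\mk{n}$ with the Cartan grading, since it quantifies over arbitrary $x\notin\Ker(\alpha)$ rather than over elements of $\mk{n}\cap\g_0$. One small inaccuracy in your write-up: the family $\{\mathrm{ad}(h):h\in\g_0\}$ is not commuting in general ($\g_0$ is only nilpotent, and the paper explicitly uses nonabelian $\g_0$ elsewhere in the proof of Theorem \ref{soldi}); the fact you need --- that an $\mathrm{ad}(\g_0)$-invariant subspace such as $\mk{n}$ splits along the weight decomposition --- is still true, but it is the statement about primary decompositions relative to a \emph{nilpotent} Lie algebra of operators (Bourbaki VII, \S 1), not the commuting-operators version. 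Your deduction of the ``in particular'' clause from Lie's theorem matches what the paper leaves implicit.
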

\begin{proof}
If $\alpha$ is a weight and $x\notin\Ker(\alpha)$, then the ideal $I$ generated by $x$ contains $\g_\alpha$ and since $[x,\g_\alpha]=\g_\alpha$ we deduce that $I$ is not nilpotent. We deduce that the nilpotent radical is contained in $\Ker(\alpha)$ for every~$\alpha$.
\end{proof}

\begin{rem}If $\g$ is solvable, then the nilpotent radical of $\g$ is actually equal to the intersection $\mk{i}$ of all the kernels of weights: to show the other inclusion amounts to showing the latter is nilpotent. Indeed, the quotient of $\g$ by its center $\mk{z}$ can be realized by the adjoint representation as an upper triangular Lie subalgebra of $\mk{gl}_n$ for $n=\dim(\g)$, and the weights are precisely the diagonal elements; thus $\mk{i}/\mk{z}$ is nilpotent, as a strictly upper triangular Lie algebra of matrices, and hence $\mk{i}$ is nilpotent as well.
\end{rem}

\begin{lem}\label{g0ab}
Suppose that $\g$ is solvable. Also suppose that $\g_0$ has codimension $\le 7$ and is abelian. Then $\g$ has a zero reduced Koszul map. 
\end{lem}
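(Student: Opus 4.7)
The plan is to construct an $\mathbf{N}$-grading of $\g$ whose degree-$0$ component is an abelian subalgebra, and then invoke Corollary \ref{gradnn}. Standard arguments (Lemma \ref{somq} and base change) reduce to the case where $K$ is algebraically closed and $\g$ is regular quadratic; the abelian case $\g=\g_0$ is trivial, so we may assume the Cartan weight set $\Phi\subset\g_0^*$ is nonempty. Weights come in pairs $\pm\alpha$ since the form has Cartan-degree $0$, and $V:=\bigoplus_{\alpha\in\Phi}\g_\alpha$ has dimension at most $6$ by the codimension hypothesis.

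Two structural inputs, beyond what precedes the lemma, carry the argument. First, combining Lemma \ref{ifso} with the invariance of the form shows that every coweight $\alpha^\vee$ lies in $\bigcap_{\beta\in\Phi}\Ker\beta$: indeed $\alpha^\vee=[v,v']$ for suitable $v\in\g_\alpha$, $v'\in\g_{-\alpha}$, which puts it in $[\g,\g]\cap\g_0$. Consequently, the coweight span $\mk{c}=\operatorname{span}\{\alpha^\vee:\alpha\in\Phi\}$ is a totally isotropic subspace of $\g_0$, satisfies $[\mk{c},\g_\alpha]=0$ for every $\alpha$, and contains the image of every bracket $[v,v']=\langle v,v'\rangle\alpha^\vee$. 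Second, a direct computation of the Jacobi identity on weight vectors $(v,v',z')$ with $v\in\g_\alpha$, $v'\in\g_{-\alpha}$, $z'\in\g_{-(\alpha+\beta)}$ shows that, whenever $\alpha,\beta,\alpha+\beta$ are all nonzero weights, the brackets $[\g_\alpha,\g_\beta]$ and $[\g_{-\alpha},\g_{-\beta}]$ cannot both be nonzero.

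With these in hand, fix a complement $\mk{t}$ of $\mk{c}$ in $\g_0$ and, by an elementary case analysis of the possible weight configurations with $\dim V\le 6$, define a function $\phi\colon\Phi\to\mathbf{N}^*$ and an integer $d\ge 2$ such that $\phi(\alpha)+\phi(-\alpha)=d$ for every $\alpha\in\Phi$ and $\phi(\alpha+\beta)=\phi(\alpha)+\phi(\beta)$ whenever $\alpha+\beta\in\Phi$ and $[\g_\alpha,\g_\beta]\ne 0$. The Jacobi restriction above is precisely what prevents the simultaneous additive requirements on the opposite pairs $(\alpha,\beta)$ and $(-\alpha,-\beta)$, which would otherwise force $d=0$. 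Place $\mk{t}$ in degree $0$, each $\g_\alpha$ in degree $\phi(\alpha)$, and $\mk{c}$ in degree $d$; compatibility with brackets is immediate from the construction together with $[\mk{c},V]=0$, and the degree-$0$ part $\mk{t}$ is abelian as a subspace of $\g_0$. Corollary \ref{gradnn} then yields $\bar\eta(\g)=\bar\eta(\mk{t})=0$.

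The main obstacle is the Jacobi restriction on cross brackets, where solvability and the invariant form interact decisively: without solvability the coweights need not be isotropic (as in $\mk{sl}_2$, where the argument collapses), and without the invariant form one cannot control the brackets $[\g_\alpha,\g_{-\alpha}]$ at all.
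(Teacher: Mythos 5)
Your overall strategy is the paper's: pass to a regular quadratic solvable $\g$ with its Cartan grading, manufacture from it an $\mathbf{N}$-grading with abelian degree-$0$ component, and invoke Corollary \ref{gradnn}. But the proposal defers all of the actual content to an unexecuted step: the existence of the function $\phi\colon\Phi\to\mathbf{N}^*$ with $\phi(\alpha)+\phi(-\alpha)=d$ and $\phi(\alpha+\beta)=\phi(\alpha)+\phi(\beta)$ on realized sums is exactly what has to be proved, and it is not a formality --- for $\mk{sl}_3$, where $\g_0$ also has codimension $6$, no such $\phi$ exists (as you yourself note, the two opposite additive constraints would force $d=0$). The paper carries out the case analysis explicitly: for codimension $\le 5$ it shows that no additive relation $\alpha+\beta=\gamma$ among nonzero weights is realized by a nonzero bracket, so that $[\g_{\neq 0},\g_{\neq 0}]\subset\mk{i}$ and one may take the grading $\mk{v}_0\oplus\g_{\neq 0}\oplus\mk{i}$ in $\{0,1,2\}$; for codimension $6$ it isolates the unique configuration $\pm\alpha,\pm\beta,\pm(\alpha+\beta)$ with one-dimensional root spaces and builds a grading in $\{0,1,2,3\}$ after proving $[x_{-\alpha},x_{-\beta}]=0$.

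That last vanishing is the crux, and your justification for it does not hold up. You claim that ``a direct computation of the Jacobi identity'' on weight vectors shows $[\g_\alpha,\g_\beta]$ and $[\g_{-\alpha},\g_{-\beta}]$ cannot both be nonzero; the Jacobi identity alone cannot show this, since both brackets are nonzero in $\mk{sl}_3$. The paper's mechanism is the nilpotency of $[\g,\g]$ (solvability via Lie's theorem): if both brackets were nonzero then, using invariance of the form, each of the six root vectors would be a bracket of two others, so all of them would survive in every term of the lower central series of $[\g,\g]$, a contradiction. Your Jacobi computation can be salvaged, but only after one knows that $[\g_0,\g_{-\gamma}]$ reduces to scalar action (so that the term $[z',[v,v']]$ vanishes) and that $-2\alpha-\beta$ is not a weight --- precisely the case-by-case facts being skipped. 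Relatedly, the identity $[v,v']=\la v,v'\ra\alpha^\vee$, which you use both to place the coweights in $[\g,\g]\cap\g_0$ and to show $[\g_\alpha,\g_{-\alpha}]\subset\mk{c}$, is only valid when $\mathrm{ad}(h)$ acts semisimply on $\g_{\pm\alpha}$ (e.g.\ when these root spaces are one-dimensional); in general $\mathrm{ad}(h)|_{\g_\alpha}$ is only $\alpha(h)$ plus a nilpotent, so ``totally isotropic,'' ``$[\mk{c},\g_\alpha]=0$,'' and the bracket formula all need the dimension count that the omitted analysis would supply. So: right strategy, but the substance of the proof is missing.
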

\begin{proof}
Let $\mk{i}\subset\g_0$ be the intersection of kernels of all weights.
Let $\mk{v}_0$ be a supplementary subspace of $\mk{i}$ in $\g_0$. 
Denote by $\g_{\neq 0}$ the direct sum of all $\g_\alpha$ for $\alpha\neq 0$. 

Let us first assume that $\g_0$ has codimension at most 5.
We first claim that $[\g_{\neq 0},\g_{\neq 0}]$ is contained in $\g_0$. Indeed, otherwise there are nonzero weights $\alpha,\beta$ such that $\alpha+\beta\neq 0$ and $[\g_\alpha,\g_\beta]\neq 0$. Thus $\alpha+\beta$ is a nonzero weight. If $\alpha\neq\beta$, then $\alpha,\beta,\alpha+\beta$ are distinct and also distinct from their negatives, thus $\g_0$ has codimension at least 6, contradiction. Otherwise $\alpha=\beta$; thus $\alpha$ and $2\alpha$ are weights, thus by the codimension assumption $\g_\alpha$ is at most 1-dimensional, which implies that $[\g_\alpha,\g_\alpha]=0$, contradiction again. (Note that this argument, which does not use solvability, fails for $\mk{sl}_3$ in which $\g_0$ has codimension 6.)

 By the above claim and Lemma \ref{ifso}, we get $[\g_{\neq 0},\g_{\neq 0}]\subset\mk{i}$ (we use the solvability assumption here, invoking Lemma \ref{ifso}).

Then, since $\g_0$ is abelian, $[\mk{v}_0,\mk{v}_0]=0$ and $[\mk{v}_0,\g_{\neq 0}]\subset \g_{\neq 0}$. Thus $\g$ admits a grading in $\{0,1,2\}$, namely $\mk{v}_0\oplus \g_{\neq 0}\oplus\mk{i}$, with abelian 0-component (and central 2-component). It follows from Corollary \ref{gradnn} that $\g$ has a zero reduced Koszul map.

Now suppose that $\g_0$ has codimension 6. If $[\g_{\neq 0},\g_{\neq 0}]\subset\g_0$, the previous argument works. Otherwise, there exist nonzero weights $\alpha,\beta,\gamma$ such that $\alpha+\beta=\gamma$ and $[\g_\alpha,\g_\beta]\neq 0$. We see that the nonzero weights are exactly, counting the possible multiplicity ($\alpha$ and $\beta$ may be equal), $\pm\alpha$, $\pm\beta$, $\pm\gamma$. 

Pick a nonzero element $x_\alpha\in\g_\alpha$ and a non-collinear element $x_\beta\in\g_\beta$; define $x_\gamma=[x_\alpha,x_\beta]$. Consider the dual basis $(x_{-\alpha},x_{-\beta},x_{-\gamma})$ with respect to the scalar product. Then writing $\la [x_\alpha,x_{\beta}],x_{-\gamma}\ra=1$ and using invariance, we obtain $[x_\beta,x_{-\gamma}]=x_{-\alpha}$ and $[x_{-\gamma},x_\alpha]=x_{-\beta}$. We claim that $[x_{-\alpha},x_{-\beta}]=0$: indeed, otherwise it would be a multiple of $x_{-\gamma}$, and this would contradict the nilpotency of $[\g,\g]$. Hence, by invariance, we also have $[x_{-\beta},x_\gamma]=[x_{\gamma},x_{-\alpha}]=0$. Define $\mk{v}_0$ as above; define $\mk{v}_1$ to have basis $(x_\alpha,x_\beta,x_{-\gamma})$; define $\mk{v}_2$ to have basis $(x_{-\alpha},x_{-\beta},x_{\gamma})$ and $\mk{v}_3=\mk{i}$. Then $\g=\bigoplus_{i=0}^3\mk{v}_i$ and $[\mk{v}_i,\mk{v}_j]\subset \mk{v}_{i+j}$ for all $i,j$: this is clear when $ij=0$ or when $\max(i,j)=3$; this has just been checked for $i=j\in\{1,2\}$; when $\{i,j\}=\{1,2\}$ this follows from Lemma \ref{ifso}. Accordingly $\g$ has a grading in $\{0,1,2,3\}$ with abelian 0-component and again by Corollary \ref{gradnn}, $\g$ has a zero reduced Koszul map.

Recalling that $\g_0$ has even codimension, the proof is complete. 
\end{proof}

\begin{proof}[End of proof of Theorem \ref{soldi}]
If $\g_0$ is abelian, Lemma \ref{g0ab} applies. Now we suppose that $\g_0$ is not abelian; hence being quadrable, $\g_0$ has dimension $\ge 5$.
The codimension of $\g_0$ being even, it is equal to 0 or 2.

If $\g_0=\g$, then $\g$ is nilpotent and Theorem \ref{everyni} applies. Otherwise the codimension of $\g_0$ is 2 and thus $\g_0$ has dimension 5 or 6. Denote by $\pm\alpha$ the nonzero weights. By the classification of nilpotent quadrable Lie algebras, $\g_0$ is isomorphic to either $\mk{w}(3)$, $\mk{w}(4)$, or $\mk{w}(3)\times\mk{a}$, where $\mk{a}$ is a 1-dimensional abelian Lie algebra. Since $[\g,\g]$ is nilpotent, we see that $[\g_\alpha,\g_{-\alpha}]$ is contained in $\Ker(\alpha)$; moreover from the Jacobi identity it follows that $[\g_\alpha,\g_{-\alpha}]$ is the line generated by $\alpha^\vee$. In all 3 cases, we deduce that there exists a Lie algebra grading on $\g_0$, denoted $\g_0=\mk{v}_0\oplus\mk{v}_1\oplus \mk{v}_2\oplus\mk{v}_3$, such that $\Ker(\alpha)=\mk{v}_1\oplus \mk{v}_2\oplus\mk{v}_3$ and $\alpha^\vee\in\mk{v}_3$. Thus if we extend this grading to $\g$ by requiring $\g_\alpha$ to have degree 1 and $\g_{-\alpha}$ to have degree 2, we obtain a grading of $\g$ in $\{0,1,2,3\}$ with abelian 0-component, and thus Corollary \ref{gradnn} applies once again.
\end{proof}

\appendix

\section{$H_2$ of current Lie algebras: an example}\label{appcheck}

Here we provide an example of computation of $H_2$ of current Lie algebras, which corroborates the computation by Neeb and Wagemann \cite{NW} but contradicts that of Zusmanovich \cite{Zus}.

Let us consider the following complex 6-dimensional Lie algebra $\mkl$ defined as the semidirect product of $\mk{sl}_2$ with its coadjoint representation. Namely, we consider a basis $(e_1,e_0,e_{-1})$ for $\mk{sl}_2$, with $[e_0,e_{\pm 1}]=\pm e_{\pm 1}$ and $[e_1,e_{-1}]=e_0$. Then denoting by $(E_{-1},E_0,E_{1})$ the dual basis of the dual, a basis for $\mkl$ is given by $(e_1,e_0,e_1,E_1,E_0,E_{-1})$ and its Lie algebra law is described by the nonzero brackets, for $\eps\in\{\pm 1\}$:
\[[e_0,e_{\epsilon}]=\epsilon e_{\epsilon},\;\; [e_1,e_{-1}]=e_0,\;\; [e_0,E_\epsilon]=\epsilon E_\epsilon,\;\;[e_{\epsilon},E_0]=-\epsilon E_{\epsilon},\;\; [e_\epsilon,E_{-\epsilon}]=E_0.\]

\begin{lem}\label{H20ver}
$H_2(\mkl)=0$.
\end{lem}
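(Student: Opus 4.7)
My plan is to use the Hochschild--Serre spectral sequence associated with the abelian ideal $\mk{sl}_2^*\subset\mkl$, together with Whitehead's lemmas, to compute $H_2(\mkl)$ directly.

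First, since $\mk{sl}_2^*$ is an abelian ideal of $\mkl$ with quotient $\mkl/\mk{sl}_2^*\cong\mk{sl}_2$, there is a convergent first-quadrant spectral sequence
\[E^2_{p,q}=H_p\bigl(\mk{sl}_2,\,H_q(\mk{sl}_2^*)\bigr)=H_p\bigl(\mk{sl}_2,\,\Lambda^q\mk{sl}_2^*\bigr)\;\Longrightarrow\;H_{p+q}(\mkl),\]
where $\mk{sl}_2$ acts on $\Lambda^q\mk{sl}_2^*$ through the coadjoint representation (abelian ideal, so its homology is just the exterior algebra with the induced action).

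Next I would invoke Whitehead's lemmas: since the ground field has characteristic zero and $\mk{sl}_2$ is semisimple, $H^1(\mk{sl}_2,M)=H^2(\mk{sl}_2,M)=0$ for every finite-dimensional $\mk{sl}_2$-module $M$; by finite-dimensional duality the same holds for $H_1$ and $H_2$. Thus $E^2_{p,q}=0$ for $p\in\{1,2\}$ and every $q$. The only possibly nonzero contributions to $H_2(\mkl)$ on the $E^2$-page live at $(0,2)$, $(1,1)$ and $(2,0)$; the last two vanish by Whitehead, and the first is the space of coinvariants $(\Lambda^2\mk{sl}_2^*)_{\mk{sl}_2}$.

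It then suffices to show $(\Lambda^2\mk{sl}_2^*)_{\mk{sl}_2}=0$. Since $\mk{sl}_2^*$ is isomorphic to the adjoint representation (a non-trivial 3-dimensional irreducible), the Clebsch--Gordan decomposition gives $\mk{sl}_2^*\otimes\mk{sl}_2^*\cong V_0\oplus V_2\oplus V_4$, and the antisymmetric part $\Lambda^2\mk{sl}_2^*$ is precisely the irreducible summand $V_2$, which is non-trivial, so has no coinvariants. Hence $E^2_{0,2}=0$, and since there is nothing for differentials to hit or come from, $H_2(\mkl)=0$.

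The only mild subtlety is ensuring that the Hochschild--Serre spectral sequence exists in the form I want for Lie-algebra homology with trivial coefficients (it does; this is standard, see e.g.\ Weibel or \cite{Pir}) and that Whitehead's vanishing really does pass from cohomology to homology via the finite-dimensionality of the coefficient modules $\Lambda^q\mk{sl}_2^*$. Both are routine, so I expect no real obstacle.
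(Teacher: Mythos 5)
Your proof is correct, but it takes a different route from the paper's. You run the Hochschild--Serre spectral sequence for the abelian ideal $\mk{sl}_2^*$, kill $E^2_{2,0}$ and $E^2_{1,1}$ by Whitehead's lemmas (transferred to homology by finite-dimensional duality), and kill $E^2_{0,2}=(\Lambda^2\mk{sl}_2^*)_{\mk{sl}_2}$ by the Clebsch--Gordan identification $\Lambda^2 V_2\cong V_2$; since everything in total degree $2$ already vanishes on the $E^2$-page, no discussion of differentials is needed, and all the steps check out. The paper instead argues directly with central extensions: given a central extension $\tilde{\mkl}$ of $\mkl$ by a line, the bracket on the radical gives an invariant \emph{alternating} form $\mkv\times\mkv\to\mk{z}$, which must vanish because the invariant bilinear forms on the irreducible module $\mkv$ form a space of dimension at most one that is already spanned by a nonzero \emph{symmetric} form; Weyl semisimplicity and the splitting of the Levi part then force the extension to split. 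The two arguments are morally parallel (your $E^2_{0,2}$, $E^2_{1,1}$, $E^2_{2,0}$ correspond respectively to the paper's alternating-form step, the module splitting, and the Levi splitting), but the paper's version avoids spectral sequences entirely and, more importantly, uses only the structural hypotheses ``$\mks$ semisimple, $\mkv$ a nontrivial irreducible with a nonzero invariant symmetric bilinear form''; this is what lets the paper reuse the same proof verbatim for $\mk{s}\ltimes\mk{s}^*$ with $\mk{s}$ an arbitrary simple Lie algebra in Example \ref{scoadj}. Your computation of $E^2_{0,2}$ via $V_2\otimes V_2\cong V_0\oplus V_2\oplus V_4$ is tied to $\mk{sl}_2$; to get the same generality you would replace it by the observation that $\Lambda^2\mkv$ has no trivial summand for exactly the reason above. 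In exchange, your method is more systematic and would also yield the higher homology of $\mkl$ with little extra effort.
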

\begin{proof}
This can be checked by a lengthy computation, but here is a short argument, only using that $\mkl=\mks\ltimes\mkv$ where $\mks$ is semisimple and $\mkv$ is a nontrivial irreducible representation of $\mks$ with a nonzero symmetric bilinear form. We have to show that if $\tilde{\mkl}$ is a central extension of $\mkl$ by a line $\mk{z}$, then it is a direct product. Indeed, let $\tilde{\mkv}$ be the radical of $\tilde{\mkl}$. Then $\tilde{\mkv}$ is a central extension of $\mkv$ by $\mk{z}$, and the Lie bracket on $\tilde{\mkv}$ provides a $\mks$-invariant alternating form $\mkv\times\mkv\to\mk{z}$. Since $\mk{v}$ is an irreducible $\mks$-module (and the ground field is $\mathbf{C}$), the space of $\mks$-invariant forms is at most one-dimensional. Hence $\tilde{\mkv}$ is abelian. As an $\mks$-module, it admits $\mkv$ as a quotient. By semisimplicity, it follows that, as an $\mks$-module, $\tilde{\mkv}=\mkv\oplus\mk{z}$. Thus the central extension is a direct product. 
\end{proof}

As any semidirect product of a Lie algebra by its coadjoint representation, there is a natural invariant scalar product $B=\la\cdot,\cdot\ra$ on $\mkl$, given by $\la e_i,E_{-i}\ra=1$ for $i\in\{1,0,-1\}$ and other scalar products zero.

In the following proposition, the ground ring is $\mathbf{C}$, and $\mathbf{C}[t]$ is the polynomial ring on the indeterminate $t$.

\begin{prop}\label{proph}
We have $H_2(\mkl\otimes\mathbf{C}[t])\neq\{0\}$; namely $te_1\we E_{-1}-e_1\we tE_{-1}$ represents a nonzero element in homology. 
\end{prop}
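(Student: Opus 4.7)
The plan is to exhibit an explicit $\mathbf{C}$-valued 2-cocycle on $\mathfrak{l}\otimes\mathbf{C}[t]$ which detects the cycle. First, $c$ is manifestly a 2-cycle: $\partial_2 c = t[e_1,E_{-1}] - t[e_1,E_{-1}] = 0$.

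The essential structural input is that $\mathfrak{l}=\mathfrak{sl}_2\ltimes\mathfrak{sl}_2^*$ is a coadjoint semidirect product with $B$ its canonical pairing. For the $\mathbf{Z}$-grading placing $\mathfrak{sl}_2$ in degree $0$ and $\mathfrak{sl}_2^*$ in degree $1$, $B$ is homogeneous of degree $-1\neq 0$, so Corollary~\ref{cor3form} shows that the associated closed 3-form $J_B(x,y,z)=B([x,y],z)$ is exact on $\mathfrak{l}$; fix $\alpha\in C^2(\mathfrak{l})$ with $d\alpha = J_B$.

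I would then define
\[
\omega(x\otimes a,\,y\otimes b) := B(x,y)\bigl(a(0)b'(0) - a'(0)b(0)\bigr) + \alpha(x,y)(ab)'(0),
\]
which is antisymmetric (the symmetric $B$ is paired with an antisymmetric bilinear form on $\mathbf{C}[t]$, and the antisymmetric $\alpha$ with a symmetric one). To check the cocycle condition, applying $\omega$ to $\partial_3(a_1 x_1\wedge a_2 x_2\wedge a_3 x_3)$ and using total antisymmetry of $J_B$ together with the elementary polynomial identity
\[
\lambda(a_1 a_2,a_3) + \lambda(a_1 a_3,a_2) + \lambda(a_2 a_3,a_1) = -(a_1 a_2 a_3)'(0)
\]
for $\lambda(a,b) = a(0)b'(0) - a'(0)b(0)$ reduces the $B$-contribution to $-J_B(x_1,x_2,x_3)(a_1 a_2 a_3)'(0)$; since $(a_ia_ja_k)'(0)$ is fully symmetric in $i,j,k$, the $\alpha$-contribution factors as $d\alpha(x_1,x_2,x_3)\cdot(a_1 a_2 a_3)'(0)$, and the two cancel exactly because $d\alpha = J_B$.

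Finally, both $\omega(te_1,E_{-1})$ and $\omega(e_1,tE_{-1})$ contain $\alpha(e_1,E_{-1})$ as their $\alpha$-contribution, which cancels in the difference $\omega(c)$; the remaining $B$-parts give $B(e_1,E_{-1})(0\cdot 0 - 1\cdot 1) - B(e_1,E_{-1})(1\cdot 1 - 0\cdot 0) = -2$, so $\omega(c) = -2\neq 0$ and $[c]\neq 0$ in $H_2(\mathfrak{l}\otimes\mathbf{C}[t])$. The main obstacle will be the cocycle verification: the polynomial calculation is elementary once the formulas are in place, but it hinges entirely on the exactness $J_B=d\alpha$ supplied by Corollary~\ref{cor3form}; without that input the $B$- and $\alpha$-pieces of $\omega$ could not be made to conspire into a valid cocycle.
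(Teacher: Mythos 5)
Your proposal is correct, but it follows a genuinely different route from the paper's. The paper does not build a $\mathbf{C}$-valued $2$-cocycle at all: it defines an $A$-valued linear functional on $\Lambda^2(A\otimes\mathfrak{l})$ supported on the $\mathfrak{sl}_2$--$\mathfrak{sl}_2^*$ pairing, namely $\lambda e_i\wedge\mu E_{-i}\mapsto \lambda'\mu$, verifies by a direct computation (reduced to a single weight of an auxiliary $\mathbf{Z}^2$-grading) that it kills all $2$-boundaries, and evaluates it on the cycle to get $t'\neq 0$; that argument is deliberately self-contained and computational, since the appendix is meant to adjudicate a contradiction in the literature. Your cocycle
\[
\omega(ax,by)=B(x,y)\bigl(a(0)b'(0)-a'(0)b(0)\bigr)+\alpha(x,y)(ab)'(0)
\]
is instead exactly a ``coupled cocycle'' in the sense of Neeb--Wagemann (cf.\ Corollary \ref{coucoc}): the antisymmetric form $\lambda(a,b)=a(0)b'(0)-a'(0)b(0)$ is not a cyclic cocycle on $\mathbf{C}[t]$ (consistently with $\HC_1(\mathbf{C}[t])=0$), its cyclic defect is the symmetric functional $-(abc)'(0)$, and this is compensated by a primitive $\alpha$ of $J_B$, whose existence you correctly extract from Corollary \ref{cor3form} using that $B$ has nonzero degree for the $\{0,1\}$-grading of the coadjoint semidirect product. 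Your identity for the cyclic sum of $\lambda$ is correct, the value $\omega(c)=-2$ is independent of the choice of $\alpha$, and a cocycle vanishes on boundaries, so $[c]\neq 0$. Two small caveats: the cancellation in the cocycle verification requires $\alpha$ to satisfy $\sum_{\mathrm{cyc}}\alpha([x,y],z)=J_B(x,y,z)$, which is $d\alpha=J_B$ for only one of the two standard sign conventions for the Chevalley--Eilenberg differential (with the other, replace $\alpha$ by $-\alpha$; exactness of $J_B$ is equivalent to that of $-J_B$, so nothing is lost); and your route imports Theorem \ref{thmkos} through Corollary \ref{cor3form}, so it is less elementary than the paper's, though in exchange it explains conceptually where the class lives --- it is detected by the $\HH_1(A)\otimes\coker(\bar{\eta})$ contribution of Corollary \ref{h10}, rather than by an ad hoc functional.
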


This result can be proved using the general results of \cite{NW}. On the other hand, it is in contradiction with the main result of \cite{Zus}, see below. Therefore we will provide an explicit, computational, proof. Also let us mention that although it is not in contradiction with \cite{Had}, it does not follow from it. (See below.)

\begin{proof}
We are going to check the result by replacing $\mathbf{C}[t]$ with an arbitrary commutative $\mathbf{C}$-algebra $A$ with a nonzero $\mathbf{C}$-linear self-derivation denoted by $x\mapsto x'$.

The following construction is inspired from Abels' computation in \cite[\S 5.7.4]{Ab}. Define a $\mathbf{C}$-linear map $A\ot A\to A$ by $\rho(\lambda\ot\mu)=\lambda'\mu$. A straightforward computation shows that it satisfies:
\[\rho(\lambda\mu\ot\alpha)=\rho(\lambda\ot\mu\alpha+\mu\ot\alpha\lambda),\quad\forall\lambda,\mu,\alpha\in A.\]

Given an element of $\g=A\ot\mkl$, we write it $ax$ instead of $a\ot x$. Note that the linear decomposition $\mkl=\mks\oplus\mkv$ (which actually defines a Lie algebra grading in $\{0,1\}$) induces a $\mathbf{C}$-linear isomorphism
\[\Lambda^2(\g)=\Lambda^2(A\ot\mks)\oplus \big((A\ot\mks)\otimes (A\ot\mkv)\big)\oplus \Lambda^2(A\ot\mkv).\]
Define a $\mathbf{C}$-linear map $c:\Lambda^2(A\ot\mkl)\to A$ by
\[c(\lambda e_i\we \mu E_{-i})=\rho(\lambda\ot\mu),\quad i=1,0,-1,\;\lambda,\mu\in A\]
and $c$ is zero on other basis elements, that is, $c$ vanishes on both $\Lambda^2(A\ot\mks)$ and $\Lambda^2(A\ot\mkv)$ and on elements of the form $\lambda e_i\we \mu E_j$ for $i\neq -j$.

Now let $t\in A$ be an element such that $t'\neq 0$. For instance, in $\mathbf{C}[t]$ with the standard derivation, $t$ itself works. Then 
\[c(te_1\we E_{-1}-e_1\we tE_{-1})=\rho(t\ot 1-1\ot t)=t'1-1't=t'\neq 0.\]

Clearly $te_1\we E_{-1}-e_1\we tE_{-1}$ is a 2-cycle (since the bracket is $A$-bilinear). So to conclude that $H_2(\g)\neq 0$, it is enough to check that $c$ vanishes on 2-boundaries.

To check this, we use a convenient grading of $\mkl$ in $\mathbf{Z}^2$: $e_i$ has degree $(i,0)$ and $E_i$ has degree $(i,1)$. This grading induces a grading on the exterior algebra and on the homology, and also passes to the current algebra (defining $\g_\omega=A\ot\mkl_\omega$ for $\omega\in\mathbf{Z}^2$) and its own exterior algebra. Note that $c$ vanishes in every degree $\neq (0,1)$. Therefore all we need to check is that $c$ vanishes on boundaries of degree $(0,1)$. 
A basis of $\mkl_{(0,1)}$ is $(e_1\we e_0\we E_{-1},e_{-1}\we e_0\we E_1,e_1\we e_{-1}\we E_0)$. Thus $\g_{(0,1)}$ is linearly generated by $A$-multiples of the same elements. We compute ($\lambda,\mu,\alpha\in A$):
\begin{align*}
\lambda e_1\we \mu e_0\we \alpha E_{-1}   \stackrel{d}\mapsto & -\lambda e_{1}\we \mu\alpha E_{-1}-
\mu e_{0}\we \alpha\lambda E_{0}+\lambda\mu e_{1}\we \alpha E_{-1} \\
& \stackrel{c}\mapsto \rho(-\lambda\ot\mu\alpha-\mu\ot\alpha\lambda+\lambda\mu\ot\alpha)=0; \\
\end{align*}
\begin{align*}\lambda e_{-1}\we \mu e_{0}\we \alpha E_{1}  \stackrel{d}\mapsto & \dots\stackrel{c}\mapsto 0\quad (\textnormal{similarly});\\
\lambda e_{1}\we \mu e_{-1}\we \alpha E_{0}  \stackrel{d}\mapsto & \;\lambda e_{1}\we \mu\alpha E_{-1}+\mu e_{-1}\we \alpha\lambda E_{1}-\lambda\mu e_{0}\we \alpha E_{0}\\
  &\stackrel{c}\mapsto \rho(\lambda\ot\mu\alpha+\mu\ot\alpha\lambda-\lambda\mu\ot\alpha)=0,
\end{align*}
hence $c$ indeed vanishes on 2-boundaries. Accordingly, $te_1\we E_{-1}-e_1\we tE_{-1}$ defines a nonzero element of $H_2(\g)$.\end{proof}

Now let us compare with the descriptions in \cite{Had,Zus,NW}. All these papers, given a complex Lie algebra $\mk{h}$ (all these papers work in a greater generality on the ground field, but let us stick to $\mathbf{C}$), address the problem of describing $H_2(A\ot\mk{h})$ when $A$ is an arbitrary associative unital $\mathbf{C}$-algebra.

The results in all these papers significantly simplify when we assume $H_1(\mk{h})=H_2(\mk{h})=0$, as we now assume. 

The result in \cite{Had} provides an exact sequence
\begin{equation}\label{hadseq}0\to D(A,\mk{h})\to H_2(A\ot\mk{h})\to \HC_1(A)\ot\Kill(\mk{h})\to 0;\end{equation}
the result in \cite{Zus} asserts an isomorphism
\begin{equation}\label{zusis}
H_2(A\ot\mk{h})\simeq \HC_1(A)\ot\Kill(\mk{h});
\end{equation}
finally the result in \cite{NW} describes $H_2(A\ot\mk{h})$ as the cokernel of a certain operator $A\times\mk{h}^{\ot 3}\to\Lambda^2A\ot S^2\mk{h}\oplus A\ot Z_2(\mk{h})$. Above, $\HC_1$ is the first cyclic homology of $A$, which can be described as the quotient of $A\we A$ by the subspace generated by elements of the form $\lambda\we\mu\alpha+\mu\we\alpha\lambda+\alpha\we\lambda\mu$. 

Note that the blatant difference between (\ref{hadseq}) and (\ref{zusis}) is concealed by the existence of several additional terms related to $H_1(\mk{h})$ and $H_2(\mk{h})$. Now let us compare Proposition \ref{proph} with the above description: when $A=\mathbf{C}[t]$, we actually have $\HC_1(A)=0$, which can be checked by hand, see also \cite[Example 3.1.7]{Lod}. In (\ref{hadseq}), the right-hand term vanishes, and this is in keeping with the fact that the nontrivial homology class $tx\we X-x\we tX$ belongs to $D(A,\mk{h})$, on the other hand \cite{Had} only provides generators of $D(A,\mk{h})$ and in particular cannot predict its non-vanishing. 

The assertion (\ref{zusis}) is contradictory, since it erroneously implies that $H_2(\mathbf{C}[t]\ot\mk{l})=0$ for the above 6-dimensional Lie algebra $\mk{l}$. Actually, for a Lie algebra $\mk{h}$ with $H_1(\mk{h})=H_2(\mk{h})=0$, the vanishing of $H_2(\mathbf{C}[t]\ot\mk{l})$ only holds (as a consequence of results of \cite{NW}) when the reduced Koszul map is surjective (for instance, when $\mk{l}$ is semisimple). We explain this in detail in Appendix \ref{nwd}, coming back to this example in Example \ref{scoadj}.


\newpage

\section{The Neeb-Wagemann description of 2-homology, revisited}\label{nwd}

The purpose of this appendix is to rewrite the results of \cite{NW} in a more convenient way, so as to decompose canonically the second homology of current algebras into explicit spaces.

The setting here is the following: $\mkl$ is a Lie algebra over a fixed field $K$ of characteristic $\neq 2$; whenever we use language pertaining to linear algebra (linearity, tensor products, homology etc.), the ground ring is $K$. Let us fix an associative, unital, commutative $K$-algebra $A$.

Define two maps $T,T_0:A^{\ot 3}\to \Lambda^2(A)$ by
\[T(a\ot b\ot c)=ab\we c+bc\we a+ca\we b;\quad T_0(a\ot b\ot c)=T(a\ot b\ot c)-abc\we 1.\]
Define the first cyclic homology $\HC_1(A)=\mathrm{coker}(T)$ and the first Hochschild homology $\HH_1(A)=\mathrm{coker}(T_0)$ (see \cite[Section 2]{NW} for a comparison with more standard definitions; these ones assume that $A$ is commutative). A simple verification shows that the image of $T_0$ is contained in the image of $T$, and therefore we have successive quotients \[\Lambda^2(A)\twoheadrightarrow \HH_1(A)\twoheadrightarrow \HC_1(A).\]

Let $I_A$ be the kernel of the multiplication map $S^2A\to A$.
A simple but very useful observation of \cite{NW} is the following:

\begin{prop}\label{candeco}
There is a canonical linear isomorphism
\begin{align*}
\Lambda^2(A\ot\mkl) &\to& \Lambda^2A\ot S^2\mkl &&\oplus& A\ot\Lambda^2\mkl &\oplus & I_A\ot \Lambda^2\mkl\\
ax\we by &\mapsto  &(a\we b)\ot (x\cc y)& &+& ab\ot (x\we y) &+& (a\cc b-ab\cc 1)\ot (x\we y);\end{align*}
it restricts to a linear isomorphism
\[Z_2(A\ot\mkl) \;\;\to\;\; \Lambda^2A\ot S^2\mkl \;\;\oplus\;\; A\ot Z_2(\mkl) \;\;\oplus \;\; I_A\ot \Lambda^2\mkl.\]
\end{prop}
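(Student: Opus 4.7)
The plan is to realise the decomposition canonically via two successive splittings, verify the explicit formula by tracing a generator, and then restrict to $2$-cycles.

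First, under the tautological isomorphism $(A\otimes\mkl)^{\otimes 2}\cong A^{\otimes 2}\otimes\mkl^{\otimes 2}$, the flip on the left corresponds to the diagonal flip on the right. Since $\mathrm{char}(K)\neq 2$, the quotient $\Lambda^2(A\otimes\mkl)$ is canonically identified with the $(-1)$-eigenspace of this flip, and the canonical splittings $A^{\otimes 2}=S^2A\oplus\Lambda^2A$, $\mkl^{\otimes 2}=S^2\mkl\oplus\Lambda^2\mkl$ exhibit this eigenspace as
\[\Lambda^2A\otimes S^2\mkl\;\oplus\;S^2A\otimes\Lambda^2\mkl.\]
Second, the unit of $A$ provides a canonical section $a\mapsto a\cc 1$ of the multiplication $\mu\colon S^2A\to A$, giving $S^2A=A\cc 1\oplus I_A\cong A\oplus I_A$. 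Tensoring with $\Lambda^2\mkl$ and combining with the previous step yields the three-summand decomposition claimed in the proposition.

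To verify the explicit formula, I will compute how a generator $ax\we by$ decomposes. The $K$-linear projections $(A\otimes\mkl)^{\otimes 2}\to\Lambda^2A\otimes S^2\mkl$ and $(A\otimes\mkl)^{\otimes 2}\to S^2A\otimes\Lambda^2\mkl$ are characterised on pure tensors by $ax\otimes by\mapsto(a\we b)\otimes(x\cc y)$ and $ax\otimes by\mapsto(a\cc b)\otimes(x\we y)$. Applying them to the antisymmetric representative $\tfrac12(ax\otimes by-by\otimes ax)$ of $ax\we by$, the factor $\tfrac12$ cancels — each projection already symmetrises or antisymmetrises in one of the two variable-pairs — leaving $(a\we b)\otimes(x\cc y)$ and $(a\cc b)\otimes(x\we y)$. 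Splitting $a\cc b=(ab)\cc 1+\bigl(a\cc b-(ab)\cc 1\bigr)$ then produces the three-term formula; alternation under $ax\we by\mapsto -(by\we ax)$ is immediate from antisymmetry of $\we$ and symmetry of $\cc$.

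Finally, for the cycle statement, I will analyse $\partial_2\colon\Lambda^2(A\otimes\mkl)\to A\otimes\mkl$, $ax\we by\mapsto ab\otimes[x,y]$. This boundary factors through the projection onto $S^2A\otimes\Lambda^2\mkl$ followed by $\mu\otimes\pi$, where $\pi\colon\Lambda^2\mkl\to\mkl$ sends $x\we y$ to $[x,y]$. Hence $\partial_2$ vanishes identically on $\Lambda^2A\otimes S^2\mkl$ (the symmetric $\mkl$-factor is killed by the alternating $\pi$) and on $I_A\otimes\Lambda^2\mkl$ (the $I_A$-factor is killed by $\mu$). On $A\otimes\Lambda^2\mkl$ it acts as $\mathrm{id}_A\otimes\pi$, whose kernel is $A\otimes Z_2(\mkl)$ by flatness of $A$ as a $K$-module. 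Intersecting with each summand yields the cycle decomposition. The main obstacle I foresee is the bookkeeping of the third paragraph: making the factor $\tfrac12$ from the embedding $\Lambda^2\hookrightarrow\mathrm{Alt}^2$ cancel cleanly against those arising from the symmetric/antisymmetric projections so that the formula emerges coefficient-free; the remainder is a chain of canonical identifications — flip eigenspaces and a unit-induced splitting — presenting no genuine difficulty.
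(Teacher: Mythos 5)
Your proof is correct. The paper itself gives no argument for this proposition --- it is quoted as an observation from Neeb--Wagemann \cite{NW} --- and your argument is exactly the expected one: the $\mathrm{char}\neq 2$ eigenspace decomposition $\Lambda^2(U\ot W)\cong(\Lambda^2U\ot S^2W)\oplus(S^2U\ot\Lambda^2W)$, followed by the unit-induced splitting $S^2A=(A\cc 1)\oplus I_A$, and the observation that $\partial_2$ is $(\mu\ot\partial_2^{\mkl})$ precomposed with the projection onto $S^2A\ot\Lambda^2\mkl$, so that it kills the first and third summands and acts as $\mathrm{id}_A\ot\partial_2^{\mkl}$ on the second. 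The coefficient bookkeeping in your third paragraph checks out.
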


We henceforth use the above isomorphism as an identification.

If we write the above decomposition of $\Lambda^2(A\ot\mkl)$ as $V_1\oplus V_2\oplus V_3$, in \cite{NW}, the authors then study 2-boundaries and describe them as the sum of 4 subspaces, the first two of which being contained in the first summand $V_1$, the third (fourth in \cite{NW}) is contained in $V_4$ and the fourth is contained in $V_1\oplus V_2$, showing that in general the 2-homology does not split as a product with respect to this decomposition. Still, it is convenient to restate things by first modding out by the three ``homogeneous terms" and then study the last one. Let us first state the theorem in \cite{NW}

\begin{thm}[Neeb-Wagemann, {\cite[Theorem 3.4]{NW}}]\label{NWt}
Define the linear map
\[f: A\ot \mkl^{\ot 3}\to \Lambda^2A\ot S^2\mkl \;\;\oplus\;\; A\ot B_2(\mkl) \;\;\subset\;\;\Lambda^2(A\ot\mkl)\] 
by $f=(f_1,f_2)$, where
\[f_1(a\ot x\ot y\ot z)=(a\we 1)\ot ([x,y]\cc z),\;\; f_2(a\ot x\ot y\ot z)=a\ot \bd_3(x\we y\we z).\]
Then 
\[B_2(A\ot\mkl)=W_1+W'_1+W_3+W_{12},\]
where
\[W_1=\Lambda^2A\ot\mkl.S^2\mkl\subset V_1,\quad W'_1=T_0(A^{\ot 3})\ot (\mkl\cc[\mkl,\mkl])\subset V_1,\]
\[W_3=I_A\ot (\mkl\we [\mkl,\mkl])\subset V_3,\quad W_{12}=\mathrm{Im}(f)\subset V_1\oplus V_2.\]
\end{thm}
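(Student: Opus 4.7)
The approach is to prove the two inclusions separately, using the canonical decomposition $\Lambda^2(A\otimes\mkl) = V_1\oplus V_2\oplus V_3$ of Proposition \ref{candeco} as the main bookkeeping tool; by construction $W_1, W'_1 \subset V_1$, $W_3 \subset V_3$, and $W_{12} \subset V_1\oplus V_2$.

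For the inclusion $B_2 \subseteq W_1 + W'_1 + W_3 + W_{12}$, I would apply $\bd_3$ to a generator $ax\we by\we cz$ and decompose each of the three resulting summands via Proposition \ref{candeco}. The $V_2$-contribution collects to $abc\otimes\bd_3(x\we y\we z)$; the $V_3$-contribution is a sum of terms of the form $(\alpha\cc\beta - \alpha\beta\cc 1)\otimes([u,v]\we w) \in I_A\otimes(\mkl\we[\mkl,\mkl]) = W_3$. For the $V_1$-contribution, the defining identity $x.(y\cc z) = [x,y]\cc z + [x,z]\cc y \in \mkl.S^2\mkl$ and its two cyclic variants yield, modulo $W_1$, the congruences $[y,z]\cc x\equiv [x,y]\cc z$ and $[x,z]\cc y\equiv -[x,y]\cc z$; hence the $V_1$-contribution reduces to $T(a\otimes b\otimes c)\otimes[x,y]\cc z = T_0(a\otimes b\otimes c)\otimes[x,y]\cc z + (abc\we 1)\otimes[x,y]\cc z$. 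The first term lies in $W'_1$, and the second combines with the $V_2$-contribution already collected to give $f(abc\otimes x\otimes y\otimes z)\in W_{12}$.

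For the reverse inclusion, I would establish each $W_i\subseteq B_2$ in a non-circular order. First, $W_1\subseteq B_2$: a direct computation shows $\bd_3(au\we bv\we w) = f(ab\otimes u\otimes v\otimes w) + (a\we b)\otimes w.(u\cc v) + (\text{$V_3$-term})$, and under $u\leftrightarrow v$ both the $f$-term and the $V_3$-term reverse sign while the $W_1$-term is symmetric, so $\bd_3(au\we bv\we w) + \bd_3(av\we bu\we w) = 2(a\we b)\otimes w.(u\cc v)$, placing the generic generator of $W_1$ in $B_2$. Next, $W_{12}\subseteq B_2$: specializing the generic formula to $b = c = 1$ gives $\bd_3(ax\we y\we z) = f(a\otimes x\otimes y\otimes z) + (a\we 1)\otimes z.(x\cc y)$, whose last term lies in $W_1\subseteq B_2$. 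For $W_3\subseteq B_2$, the $V_3$-parts of $\bd_3(au\we bv\we w)$ and $\bd_3(x\we ay\we bz)$ each lie in $B_2$ (since their $V_1 + V_2$ parts are now known to be in $W_1 + W_{12}\subseteq B_2$); writing $\eta(p,q,r) := (a\cc b - ab\cc 1)\otimes(p\we[q,r])$, the first identity yields $\eta(p,q,r)\equiv \eta(q,p,r)$ and the second $\eta(p,q,r)\equiv\eta(r,q,p)$ modulo $B_2$, and chaining them produces $\eta(p,q,r)\equiv\eta(p,r,q) = -\eta(p,q,r)$, so $2\eta(p,q,r)\in B_2$ and, since $\mathrm{char}(K)\neq 2$, $W_3\subseteq B_2$. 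Finally, $W'_1\subseteq B_2$: the generic formula above shows that $\bd_3(ax\we by\we cz)$ equals $T_0(a\otimes b\otimes c)\otimes[x,y]\cc z$ modulo $W_1 + W_3 + W_{12}$, all three of which are in $B_2$.

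The main obstacle is the step $W_3\subseteq B_2$: no direct boundary preimage of a generic generator $\eta(p,q,r)$ appears to be available, and one must instead combine two independent antisymmetrization identities arising from two different boundary formulas, using characteristic $\neq 2$ essentially. The rest of the argument is bookkeeping, but the ordering of the four inclusions matters, since each $W_i$ lies in $B_2$ only modulo the others and the natural identities relating them are mutually entangled.
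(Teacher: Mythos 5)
The paper does not actually prove this statement: it is imported verbatim from Neeb--Wagemann \cite{NW} (their Theorem 3.4) and used as a black box, so there is no internal proof to compare yours against. That said, your argument is a correct, self-contained derivation, and its architecture is sound: expanding $\bd_3(ax\we by\we cz)$ through the decomposition of Proposition \ref{candeco} gives the inclusion $B_2\subseteq W_1+W'_1+W_3+W_{12}$ once the $V_1$-part is reduced modulo $\mkl.S^2\mkl$ to $\pm T(a\ot b\ot c)\ot([x,y]\cc z)$; and your ordering of the reverse inclusions ($W_1$ by symmetrizing in $u,v$, then $W_{12}$ by specializing $b=c=1$, then $W_3$ by combining the two antisymmetrization identities on $\iota\ot(p\we[q,r])$ with $\mathrm{char}(K)\neq 2$, then $W'_1$) avoids circularity. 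I checked the $W_3$ step in detail, since it is the genuinely nontrivial one: the two congruences you extract correspond to the transpositions $(12)$ and $(13)$ of the slots of $p\we[q,r]$, they generate $S_3$, and composing them against the built-in antisymmetry in $(q,r)$ does yield $2\iota\ot(p\we[q,r])\in B_2$. The only point to flag is a sign-convention issue: with the boundary formula as normalized in Lemma \ref{vanb3}, the $V_1+V_2$ part of $\bd_3(ax\we y\we z)$ comes out as $-(a\we 1)\ot([x,y]\cc z)+a\ot\bd_3(x\we y\we z)$, which is $f$ with the first component negated. This is harmless --- the discrepancy $2(a\we 1)\ot([x,y]\cc z)$ lies in $W'_1$ because $a\we 1=-T_0(1\ot 1\ot a)$, so $\mathrm{Im}(f)$ and its sign-flipped variant agree modulo $W'_1$ and the asserted equality of subspaces is unaffected --- but a careful write-up should either fix the convention so the signs match or note explicitly that the mismatch is absorbed by $W'_1$.
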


Here $\mkl.S^2\mkl$ is the kernel of the natural projection $S^2\mkl\to\Kill(\mkl)$.

Neeb and Wagemann then give applications of this theorem to the existence of ``coupled cocycles", in relation to the Koszul map; see Corollary \ref{coucoc} for a statement in terms of the Koszul map.

The quotient $V_3/W_3$ is $I_A\ot\Lambda^2(H_1(\mkl))$. Thus the 2-homology can be described as 
\[H_2(A\ot\mkl)=\Big((V_1\oplus V_2)/(W_1+W'_1+\mathrm{Im}(f))\Big)\oplus \Big(I_A\ot\Lambda^2(H_1(\mkl))\Big)\]

Define
\[E_1=V_1/W_1=\Lambda^2(A)\ot\Kill(\mkl),\quad E_2=V_2=A\otimes Z_2(\mkl).\]
Note that $f_1$ composed with the projection onto $E_1$ factors through $A\otimes\Lambda^3\mkl$, there the component $\Lambda^3\mkl\to\Kill(\mkl)$ is nothing else than the Koszul map $\eta=\eta_\mkl$ of $\mkl$; note that $f_2$ already factors through $A\ot\Lambda^3\mkl$.

In addition, define $M=(A\oplus A^{\ot 3})\ot\Lambda^3\mkl$ and a $K$-linear map $u:M\to E_1\oplus E_2$ by $u=(u_1,u_2)=(v_1\oplus v'_1,u_2\oplus 0)$ with
\begin{align*}v_1\big(a,(b\ot c\ot d))\otimes (x\we y\we z)\big)=& (a\we 1)\ot \eta(x\we y\we z);\\
v'_1\big(a,(b\ot c\ot d))\otimes (x\we y\we z)\big)=& T_0(b\ot c\ot d)\ot \eta(x\we y\we z);\\
u_2\big(a,(b\ot c\ot d))\otimes (x\we y\we z)\big)=& a\ot \bd(x\we y\we z).
\end{align*}

\begin{thm}[Neeb-Wagemann, first restatement]\label{nwfr}
There is a canonical linear isomorphism
\[H_2(A\ot\mkl)\simeq \big((E_1\oplus E_2)/\mathrm{Im}(u)\big)\oplus \big(I_A\otimes \Lambda^2H_1(\mkl)\big).\]
\end{thm}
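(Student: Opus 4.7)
The plan is to derive Theorem \ref{nwfr} from Theorem \ref{NWt} by carrying out the quotient $Z_2/B_2$ in two stages, using the canonical splitting of Proposition \ref{candeco}. Under that splitting,
\[Z_2(A\ot\mkl)\;\simeq\;V_1\oplus \big(A\ot Z_2(\mkl)\big)\oplus V_3,\]
with $V_1=\Lambda^2 A\ot S^2\mkl$ and $V_3=I_A\ot\Lambda^2\mkl$. First I would mod out the two ``diagonal'' boundary pieces $W_1\subset V_1$ and $W_3\subset V_3$. Since $\Kill(\mkl)=S^2\mkl/\mkl.S^2\mkl$ by definition, and $\Lambda^2\mkl/(\mkl\we[\mkl,\mkl])$ is canonically $\Lambda^2 H_1(\mkl)$, this yields
\[Z_2(A\ot\mkl)/(W_1+W_3)\;\simeq\;E_1\oplus E_2\oplus\big(I_A\ot\Lambda^2 H_1(\mkl)\big).\]

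Next I would mod out the images of $W'_1$ and $W_{12}$ in this quotient. The crucial observation is that both land entirely inside $E_1\oplus E_2$: indeed $W'_1\subset V_1$, while $W_{12}=\mathrm{Im}(f)\subset V_1\oplus V_2$ and moreover $f_2$ has image in $A\ot B_2(\mkl)\subset E_2$. So the last summand $I_A\ot\Lambda^2 H_1(\mkl)$ is untouched and splits off, leaving
\[H_2(A\ot\mkl)\;\simeq\;\big((E_1\oplus E_2)/Q\big)\oplus\big(I_A\ot\Lambda^2 H_1(\mkl)\big),\]
where $Q$ denotes the image of $W'_1+W_{12}$ in $E_1\oplus E_2$.

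It remains to identify $Q$ with $\mathrm{Im}(u)$. For $W'_1=T_0(A^{\ot 3})\ot(\mkl\cc[\mkl,\mkl])$ I would invoke the fact, recalled in \S\ref{fiki}, that the projection of $\mkl\cc[\mkl,\mkl]$ into $\Kill(\mkl)$ is exactly $\Kill^{(3)}(\mkl)=\mathrm{Im}(\eta)$; hence the image of $W'_1$ in $E_1$ equals $T_0(A^{\ot 3})\ot\mathrm{Im}(\eta)$, which is the image of $v'_1$. For $W_{12}$, I would use that once projected to $E_1$ the element $[x,y]\cc z$ becomes alternating in $(x,y,z)$, by the defining relations $x\cc[y,z]\equiv y\cc[z,x]$ of $\Kill(\mkl)$, and there coincides with $\eta(x\we y\we z)$. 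Hence $f_1$ modulo $W_1$ factors through $A\ot\Lambda^3\mkl$ as $v_1$, and since $f_2$ already factors through $A\ot\Lambda^3\mkl$ as $u_2$, the image of $W_{12}$ in $E_1\oplus E_2$ equals the image of the restriction of $u$ to the summand $A\ot\Lambda^3\mkl\subset M$; adding the image of $v'_1$ produces all of $\mathrm{Im}(u)$.

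The only genuinely non-bookkeeping step will be verifying that $[x,y]\cc z$ is fully alternating in $(x,y,z)$ modulo $\mkl.S^2\mkl$, so that $f_1$ descends to the Koszul map $\eta$ on $\Lambda^3\mkl$; this is a direct consequence of the relations defining $T$ together with the antisymmetry of the Lie bracket. Everything else is a careful reshuffling of Theorem \ref{NWt} along the canonical direct sum of Proposition \ref{candeco}.
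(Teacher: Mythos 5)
Your proposal is correct and follows essentially the same route as the paper: derive the statement from Theorem \ref{NWt} by quotienting along the splitting of Proposition \ref{candeco}, observe that $W_1,W'_1,W_{12}$ live in $V_1\oplus V_2$ while $W_3\subset V_3$ so that $V_3/W_3=I_A\ot\Lambda^2H_1(\mkl)$ splits off, and identify the image of $W'_1+W_{12}$ in $E_1\oplus E_2$ with $\mathrm{Im}(u)=\mathrm{Im}(v_1,u_2)+\mathrm{Im}(v'_1)$ via the fact that $f_1$ modulo $W_1$ descends through $A\ot\Lambda^3\mkl$ to the Koszul map. Your only added content is making explicit the (correct) verification that $[x,y]\cc z$ is fully alternating modulo $\mkl.S^2\mkl$, which the paper treats as already established in its construction of $\eta$.
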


Note that since the kernels of $(v_1,u_2)$ and $(v'_1,0)$ generate $M$,
clearly $\mathrm{Im}(u)=\mathrm{Im}(v_1,u_2)+\mathrm{Im}(v'_1)$; the first term being the projection of $\mathrm{Im}(f)$ and the second being the projection of $W'_1$. Therefore the above theorem indeed follows from Theorem \ref{NWt}.

Now to make the theorem more explicit, we have to describe $\coker(u)$.
The following general linear algebra lemma is immediate, but is useful to have in mind to follow the argument.
\begin{lem}\label{linalg}
Let $u=(u_1,u_2):M\to E_1\oplus E_2$ be a linear map between vector spaces. Then there is a canonical exact sequence:
\[0\to M/(\Ker(u_1)+\Ker(u_2))\to \coker(u)\to \coker(u_1)\oplus\coker(u_2)\to 0.\]
Moreover, there are canonical isomorphisms 
\[M/(\Ker(u_1)+\Ker(u_2))\simeq \mathrm{Im}(u_1)/u_1(\Ker(u_2))\simeq \mathrm{Im}(u_2)/u_2(\Ker(u_1)).\]

Besides, the image of $u$ splits according to the direct decomposition if and only if the kernel vanishes, i.e.\ $M=\Ker(u_1)+\Ker(u_2)$.
\end{lem}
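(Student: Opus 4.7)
The plan is to reduce everything to a short diagram chase. The key remark is that $\mathrm{Im}(u)\subseteq \mathrm{Im}(u_1)\oplus\mathrm{Im}(u_2)$, because $u(m)=(u_1(m),u_2(m))$ has each coordinate in the appropriate image. Hence the surjection
\[E_1\oplus E_2\twoheadrightarrow \coker(u_1)\oplus\coker(u_2)\]
obtained by projecting each factor kills $\mathrm{Im}(u)$ and so factors through $\coker(u)$, producing a surjection $\coker(u)\twoheadrightarrow \coker(u_1)\oplus\coker(u_2)$ whose kernel is, by inspection, $(\mathrm{Im}(u_1)\oplus\mathrm{Im}(u_2))/\mathrm{Im}(u)$.

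The main step will then be to identify this kernel with $M/(\Ker(u_1)+\Ker(u_2))$. I would define
\[\varphi:M\;\longrightarrow\;(\mathrm{Im}(u_1)\oplus\mathrm{Im}(u_2))/\mathrm{Im}(u),\qquad m\mapsto \overline{(u_1(m),0)}.\]
Note that modulo $\mathrm{Im}(u)=\{(u_1(m),u_2(m)):m\in M\}$ one has $\overline{(u_1(m),0)}=\overline{(0,-u_2(m))}$, which makes the map manifestly ``symmetric'' in the two factors. For surjectivity, write a general element of the numerator as $(u_1(m_1),u_2(m_2))$; subtracting $u(m_2)$ puts it in the form $(u_1(m_1-m_2),0)$, which lies in $\mathrm{Im}(\varphi)$. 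For the kernel, $\varphi(m)=0$ means $(u_1(m),0)=(u_1(m'),u_2(m'))$ for some $m'$, so $m'\in\Ker(u_2)$ and $m-m'\in\Ker(u_1)$, whence $m\in\Ker(u_1)+\Ker(u_2)$; the converse is immediate. This gives the desired exact sequence.

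For the two claimed isomorphisms, I would define directly
\[\psi_1:M\to\mathrm{Im}(u_1)/u_1(\Ker(u_2)),\qquad m\mapsto\overline{u_1(m)}.\]
Since $u_1(\Ker(u_1)+\Ker(u_2))=u_1(\Ker(u_2))$, the map $\psi_1$ factors through $M/(\Ker(u_1)+\Ker(u_2))$. Surjectivity is obvious, and injectivity follows by the same argument as above: $u_1(m)\in u_1(\Ker(u_2))$ means $u_1(m)=u_1(k)$ for some $k\in\Ker(u_2)$, whence $m-k\in\Ker(u_1)$ and $m\in\Ker(u_1)+\Ker(u_2)$. The isomorphism with $\mathrm{Im}(u_2)/u_2(\Ker(u_1))$ follows by swapping the roles of the indices.

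Finally, the splitting assertion is a direct consequence: $\mathrm{Im}(u)$ equals the product $\mathrm{Im}(u_1)\oplus\mathrm{Im}(u_2)$ precisely when the first term of the exact sequence vanishes, i.e.\ when $M=\Ker(u_1)+\Ker(u_2)$. There is no real obstacle in the argument; the only minor subtlety is keeping straight that $\mathrm{Im}(u)$ is generally strictly contained in $\mathrm{Im}(u_1)\oplus\mathrm{Im}(u_2)$, which is exactly why the three equivalent descriptions of the obstruction to splitting are worth recording.
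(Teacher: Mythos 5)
Your proof is correct and follows essentially the same route as the paper: a map $M\to\coker(u)$ induced by $(u_1,0)$, the observation that it kills $\Ker(u_1)+\Ker(u_2)$, the "subtract $u(m_2)$" trick for exactness in the middle, and the identical argument for the two isomorphisms. The only cosmetic difference is that you first identify the kernel term as $(\mathrm{Im}(u_1)\oplus\mathrm{Im}(u_2))/\mathrm{Im}(u)$ and deduce the splitting criterion from the vanishing of that term (which tacitly uses that a subspace of $E_1\oplus E_2$ is the sum of its intersections with the factors iff it is the direct sum of its projections), whereas the paper argues via $u^{-1}(\mathrm{Im}(u)\cap E_i)$; both are immediate.
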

\begin{proof}
(Note that all this holds for modules over an arbitrary ring, and probably can be adapted to make sense in an arbitrary abelian category.)

Consider the map $(u_1,0)$ and view it after composition as a map $u'$ from $M$ to $\coker(u)$. Then $u'$ vanishes on both $\Ker(u_1)$ and $\Ker(u_2)$ and hence factors through a map $u'':M/(\Ker(u_1)+\Ker(u_2))\to\coker(u)$. Obviously composing with the projection onto $\coker(u_1)\oplus\coker(u_2)$ yields zero. Let us check exactness: suppose that some element $y=(y_1,y_2)\in E_1\oplus E_2$ has a trivial image in $\coker(u_1)\oplus\coker(u_2)$. This means that we can write $(y_1,y_2)=(u_1(x_1),u_2(x_2))$ for some $x_1,x_2\in M$. Then $y-u(x_2)=(u_1(x_1-x_2),0)$; thus the image of $y$ in $\coker(u)$ belongs to the image of $u''$.

For the second statement, consider the map $u_1$ and view it, after composition, as a map $M\to \mathrm{Im}(u_1)/u_1(\Ker(u_2))$. Then it obviously vanishes on both $\Ker(u_1)$ and $\Ker(u_2)$ and thus induces a map $M/(\Ker(u_1)+\Ker(u_2))\stackrel{v}\to \mathrm{Im}(u_1)/u_1(\Ker(u_2))$; this map is surjective by definition; if $x\in M$ represents an element in the kernel, it means that $u_1(x)=u_1(y)$ for some $y\in\Ker(u_2)$ and thus $x=(x-y)+y\in\Ker(u_1)+\Ker(u_1)$; thus $v$ is an isomorphism.

As for the third assertion, the image splits by definition if and only if $\mathrm{Im}(u)=(\mathrm{Im}(u)\cap E_1)+(\mathrm{Im}(u)\cap E_2)$. Since $u^{-1}(\mathrm{Im}(u)\cap E_i)=\Ker(u_i)$, and since the inverse image map $u^{-1}$ between power sets is injective on $\mathrm{Im}(u)$, this is equivalent to the requirement $M=\Ker(u_1)+\Ker(u_2)$.
\end{proof}

Therefore, turning back to the setting of Theorem \ref{nwfr}, we have to describe all three subspaces given by Lemma \ref{linalg}: the cokernel of $u_1=v_1\oplus v'_1$ and $u_2$, and the quotient $M/(\Ker(u_1)+\Ker(u_2))$.
The easiest is $\coker(u_2)$, which is just $A\ot H_2(\mkl)$.
The cokernel of $u_1$ can be described by either of the following natural exact sequences: recall from \S\ref{fiki} that $\Kill^{(3)}(\mkl)$ is defined as the image of $\mkl\ot [\mkl,\mkl]$ in $\Kill(\mkl)$:

\[0\to \HC_1(A)\otimes\Kill^{(3)}(\mkl)\to\mathrm{coker}(u_1)\to \Lambda^2A\otimes S^2(H_1(\mkl))\to 0;\]
\[0\to T(A^{\otimes 3})\otimes S^2(H_1(\mkl))\to\mathrm{coker}(u_1)\to \HC_1(A)\otimes\Kill(\mkl)\to 0.\]

Actually $\coker(u_1)$ is an iterated extension of the three modules $T(A^{\otimes 3})\ot S^2(H_1(\mkl))$, $\HC_1(A)\ot S^2(H_1(\mkl))$, and $\HC_1(A)\ot\Kill^{(3)}(\mkl)$, refining both of these exact sequences. Note that if $H_1(\mkl)=0$ then only one term remains, namely the most interesting one: $\HC_1(A)\ot\Kill^{(3)}(\mkl)$.

Finally, we have to describe the kernel $M/(\Ker(u_1)+\Ker(u_2))$. Lemma \ref{linalg} provides 3 descriptions of this space.
The first description is just by using the definition. We have, in the decomposition $M=A\ot\Lambda^3\mkl\oplus A^{\ot 3}\ot\mkl$
\[\Ker(u_2)=A\ot Z_3(\mkl)\oplus A^{\ot 3}\ot\Lambda^3\mkl,\]
thus
\[M/\Ker(u_2)\simeq A\ot B_2(\mkl)\oplus \{0\}.\]

To describe the kernel of $u_1$, we write $w(a\oplus(b\ot c\ot d))=(a\we 1)\oplus T_0(b\ot c\ot d)$, so that $u_1=w\ot\eta$. Then, we have
\[\Ker(u_1)=\Ker(w)\otimes\Lambda^3\mkl+(A\oplus A^{\ot 3})\ot\Ker(\eta);\]
thus 
\[\Ker(u_1)+\Ker(u_2)=A\ot (Z_3(\mkl)+\Ker\,\eta)+(A^{\ot 3}+\Ker(w))\ot\Lambda^3\mkl\]
To describe $A^{\ot 3}+\Ker(w)\subset A\oplus A^{\ot 3}$, it is enough to compute its projection on $A$, which is precisely the kernel $A_0$ of the map $a\mapsto [a\we 1]$ from $A$ to $\HH_1(A)$. Thus 
\[\Ker(u_1)+\Ker(u_2)=A\ot (Z_3(\mkl)+\Ker\,\eta)+(A_0\oplus A^{\ot 3})\ot\Lambda^3\mkl;\]
and hence we have 
\[M/(\Ker(u_1)+\Ker(u_2))= \Big(A/A_0 \ot \big(\Lambda^3\mkl/(Z_3(\mkl)+\Ker\,\eta)\big)\Big)\oplus\{0\}.\] 
 
In particular, using the isomorphisms of Lemma \ref{linalg}, we have natural isomorphisms
\[M/(\Ker(u_1)+\Ker(u_2))\simeq A/A_0 \ot B_2(\mkl)/\bd_3(\Ker\,\eta)\simeq A/A_0\ot \Kill^{(3)}(\mkl)/\mathrm{Im}(\bar{\eta}).\] 

Thus we have the following corollary
\begin{cor}\label{corh2}
The space $H_2(A\ot\mkl)$ is an iterated extension of the following spaces:
\begin{itemize}
\item $\HC_1(A)\ot\Kill^{(3)}(\mkl)$;
\item $(A/A_0)\ot\Kill^{(3)}(\mkl)/\mathrm{Im}(\bar{\eta})$; 
\item $\Lambda^2 A\ot S^2(H_1(\mkl))$;
\item $I_A\ot \Lambda^2(H_1(\mkl))$;
\item $A\ot H_2(\mkl)$.
\end{itemize}
The natural projection $H_2(A\ot\mkl)\to A\ot H_2(\mkl)$ is surjective and its kernel is an iterated extension of the above terms except the last one.
\end{cor}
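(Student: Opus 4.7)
The plan is to assemble the pieces already worked out in the paragraphs immediately preceding the statement. Starting point is Theorem \ref{nwfr}, which canonically splits
\[H_2(A\ot\mkl)\simeq \bigl((E_1\oplus E_2)/\mathrm{Im}(u)\bigr)\oplus \bigl(I_A\otimes \Lambda^2H_1(\mkl)\bigr),\]
producing the summand $I_A\ot \Lambda^2H_1(\mkl)$ for free and reducing the problem to the analysis of $\coker(u) = (E_1\oplus E_2)/\mathrm{Im}(u)$.

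Apply Lemma \ref{linalg} to the map $u=(u_1,u_2):M\to E_1\oplus E_2$. This yields the exact sequence
\[0\to M/(\Ker(u_1)+\Ker(u_2))\to \coker(u)\to \coker(u_1)\oplus\coker(u_2)\to 0,\]
so $\coker(u)$ is an iterated extension of three subquotients, which I identify in turn. First, $\coker(u_2)\simeq A\ot H_2(\mkl)$ is immediate from $u_2(a\ot(x\we y\we z))=a\ot\bd_3(x\we y\we z)$ together with right-exactness of $A\ot-$. Second, $\coker(u_1)$ fits into the first of the two exact sequences displayed just above the statement, exhibiting it as an extension of $\Lambda^2 A\ot S^2(H_1(\mkl))$ by $\HC_1(A)\ot \Kill^{(3)}(\mkl)$. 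Third, the identification
\[M/(\Ker(u_1)+\Ker(u_2))\simeq (A/A_0)\ot \bigl(\Kill^{(3)}(\mkl)/\mathrm{Im}(\bar{\eta})\bigr)\]
is exactly the one obtained in the paragraph preceding the statement, via the second isomorphism in Lemma \ref{linalg} (namely $\mathrm{Im}(u_2)/u_2(\Ker(u_1))$) after unwinding $A_0$ as the kernel of $A\to\HH_1(A),\ a\mapsto [a\we 1]$.

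For the final assertion, observe that the natural map $H_2(A\ot\mkl)\to A\ot H_2(\mkl)$, induced at the chain level by $ax\we by\mapsto ab\ot(x\we y)$, is the composite of two surjections: first, the projection to the $(E_1\oplus E_2)/\mathrm{Im}(u)$-summand in Theorem \ref{nwfr} (killing $I_A\ot\Lambda^2H_1(\mkl)$), and second, the projection $\coker(u)\to \coker(u_2)=A\ot H_2(\mkl)$ from the exact sequence of Lemma \ref{linalg}. Hence it is surjective and its kernel is the iterated extension of the other four listed summands. The whole argument is pure bookkeeping; the only point requiring care is to verify that the various identifications are canonical rather than merely abstract, which is guaranteed by the naturality of Proposition \ref{candeco} and of the maps $f,u_1,u_2$ defined afterwards. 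No additional computation beyond what precedes the statement is needed.
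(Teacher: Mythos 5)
Your proposal is correct and follows exactly the route the paper intends: Corollary \ref{corh2} is stated without a separate proof precisely because it is the assembly of Theorem \ref{nwfr}, Lemma \ref{linalg}, and the three identifications of $\coker(u_1)$, $\coker(u_2)$ and $M/(\Ker(u_1)+\Ker(u_2))$ carried out in the preceding paragraphs. Your added observation that the natural projection onto $A\ot H_2(\mkl)$ is the composite of the two surjections (killing $I_A\ot\Lambda^2 H_1(\mkl)$ and then projecting $\coker(u)\to\coker(u_2)$) is the same bookkeeping the paper leaves implicit.
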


Note that the surjectivity statement is immediate, although it may fail in higher homology. The above statement significantly simplifies when $H_1$ vanishes, but since this is a quite strong restriction, let us fit it into the graded setting: we assume that the Lie algebra $\mkl=\bigoplus_{\beta\in \mathcal{B}}\mkl_\beta$ is graded in an abelian group $\mathcal{B}$. (All what follows encompasses the non-graded setting just by picking a grading concentrated in degree zero: $\mkl_0=\mkl$.) This induces a grading of $A\ot\mkl=\bigoplus_{\beta\in \mathcal{B}}A\ot\mkl_{\beta}$, and on all exterior algebras and homology spaces; the natural maps (boundary, Koszul map\dots) preserving the grading. Note that the condition $H_1(\mkl)_\beta=\{0\}$ means that $\mkl_\beta\subset [\mkl,\mkl]$.

\begin{cor}\label{h10}
Fix a weight $\beta\in \mathcal{B}$ and assume that $S^2(H_1(\mkl))_\beta=\{0\}$ (i.e., $H_1(\mkl)$ has no opposite weights). Then  
the space $H_2(A\ot\mkl)_\beta$ is an iterated extension of the following spaces:
\begin{itemize}
\item $\HC_1(A)\ot\Kill(\mkl)_\beta$;
\item $(A/A_0)\ot\big(\Kill(\mkl)/\mathrm{Im}(\bar{\eta}_\mkl)\big)_\beta$; 
\item $A\ot H_2(\mkl)_\beta$.
\end{itemize}
The natural projection $H_2(A\ot\mkl)_\beta\to A\ot H_2(\mkl)_\beta$ is surjective and its kernel $\mathcal{K}(A,\mkl)_\beta$ in degree $\beta$ lies in each of the following two exact sequences:
\[0\to (A/A_0)\ot\coker(\bar{\eta}_\mkl)_\beta \to \mathcal{K}(A,\mkl)_\beta\to \HC_1(A)\ot\Kill(\mkl)_\beta\to 0,\]
\[0\to \HC_1(A)\ot\mathrm{Im}(\bar{\eta}_\mkl)_\beta \to \mathcal{K}(A,\mkl)_\beta\to \HH_1(A)\ot\coker(\bar{\eta}_\mkl)_\beta\to 0,\]
\end{cor}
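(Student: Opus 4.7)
The plan is to deduce Corollary \ref{h10} from Corollary \ref{corh2} together with the more direct description of $\mathcal{K}(A,\mkl)_\beta$ as a quotient $E_{1,\beta}/u_1(\Ker u_2)_\beta$ that is already visible from the proof of Corollary \ref{corh2} via Lemma \ref{linalg}. I would start by pruning the five-term iterated extension of Corollary \ref{corh2} down to the claimed three. Decomposing $H_1(\mkl)$ under the grading and splitting $S^2$ and $\Lambda^2$ of its weight-$\beta$ part according to pairs of weights, the off-diagonal parts of $S^2$ and $\Lambda^2$ contribute identically, while the diagonal parts both vanish as soon as $H_1(\mkl)_{\beta/2}=0$ (which is forced by $S^2(H_1(\mkl))_\beta=0$ since the field has characteristic $\neq 2$); hence $\Lambda^2(H_1(\mkl))_\beta=0$ as well, and the third and fourth bullets of Corollary \ref{corh2} disappear in degree $\beta$. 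Moreover the canonical identification $\Kill(\mkl)/\Kill^{(3)}(\mkl)\simeq S^2(H_1(\mkl))$ recalled in \S\ref{fiki} gives $\Kill(\mkl)_\beta=\Kill^{(3)}(\mkl)_\beta$, which brings the surviving bullets into the form claimed in Corollary \ref{h10}.

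The first exact sequence for $\mathcal{K}(A,\mkl)_\beta$ is then immediate from the construction in the proof of Corollary \ref{corh2}: by Lemma \ref{linalg}, the submodule $M/(\Ker u_1+\Ker u_2)_\beta\simeq (A/A_0)\ot\coker(\bar{\eta}_\mkl)_\beta$ sits canonically as a subspace of $\coker(u)_\beta$, with complementary quotient $\coker(u_1)_\beta\oplus\coker(u_2)_\beta$. The kernel of the further projection onto $\coker(u_2)_\beta=A\ot H_2(\mkl)_\beta$ is therefore an extension of $\coker(u_1)_\beta$, which under our hypothesis reduces to $\HC_1(A)\ot\Kill(\mkl)_\beta$, by $(A/A_0)\ot\coker(\bar{\eta}_\mkl)_\beta$.

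The second exact sequence is the main step and requires a different filtration on the same quotient. Starting from $\mathcal{K}(A,\mkl)_\beta=(\Lambda^2A\ot\Kill(\mkl)_\beta)/u_1(\Ker u_2)_\beta$ (another incarnation of Lemma \ref{linalg}), and unwinding the denominator using $\eta(Z_3(\mkl))=\mathrm{Im}(\bar{\eta}_\mkl)$ (Lemma \ref{vanb3}) together with $\eta(\Lambda^3\mkl)_\beta=\Kill^{(3)}(\mkl)_\beta=\Kill(\mkl)_\beta$, one gets $u_1(\Ker u_2)_\beta=(A\we 1)\ot\mathrm{Im}(\bar{\eta}_\mkl)_\beta+\mathrm{Im}(T_0)\ot\Kill(\mkl)_\beta$. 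I would then filter $\Lambda^2A\ot\Kill(\mkl)_\beta$ by the subspace $\Lambda^2A\ot\mathrm{Im}(\bar{\eta}_\mkl)_\beta$ and compute the induced sub- and quotient pieces of $\mathcal{K}(A,\mkl)_\beta$: after modding out $\mathrm{Im}(\bar{\eta}_\mkl)_\beta$ from $\Kill(\mkl)_\beta$, only the $\mathrm{Im}(T_0)$ part of the denominator survives, yielding the quotient $(\Lambda^2A/\mathrm{Im}(T_0))\ot\coker(\bar{\eta}_\mkl)_\beta=\HH_1(A)\ot\coker(\bar{\eta}_\mkl)_\beta$, while the subspace piece becomes $(\Lambda^2A/((A\we 1)+\mathrm{Im}(T_0)))\ot\mathrm{Im}(\bar{\eta}_\mkl)_\beta=\HC_1(A)\ot\mathrm{Im}(\bar{\eta}_\mkl)_\beta$, using $(A\we 1)+\mathrm{Im}(T_0)=\mathrm{Im}(T)$. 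The main technical step, which I expect to be the key obstacle, is the intersection identity $(\Lambda^2A\ot\mathrm{Im}(\bar{\eta}_\mkl)_\beta)\cap u_1(\Ker u_2)_\beta=\mathrm{Im}(T)\ot\mathrm{Im}(\bar{\eta}_\mkl)_\beta$; this is established by choosing a $K$-linear complement of $\mathrm{Im}(\bar{\eta}_\mkl)_\beta$ inside $\Kill(\mkl)_\beta$ (possible over a field) and using that tensor product distributes over direct sum decompositions.
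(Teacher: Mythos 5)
Your proposal is correct and follows essentially the route the paper intends (the corollary is stated there without a separate proof, as a consequence of Theorem \ref{nwfr}, Lemma \ref{linalg}, and the computations of $\coker(u_1)$, $\coker(u_2)$ and $M/(\Ker(u_1)+\Ker(u_2))$): pruning the terms of Corollary \ref{corh2} via $S^2(H_1(\mkl))_\beta=\Lambda^2(H_1(\mkl))_\beta=0$ and $\Kill(\mkl)_\beta=\Kill^{(3)}(\mkl)_\beta$, and filtering $\mathcal{K}(A,\mkl)_\beta=\big(\Lambda^2A\ot\Kill(\mkl)_\beta\big)/u_1(\Ker u_2)_\beta$ in the two ways you describe; your intersection identity and its proof by choosing a complement of $\mathrm{Im}(\bar{\eta}_\mkl)_\beta$ are valid. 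The only cosmetic slip is the phrase ``$H_1(\mkl)_{\beta/2}=0$ \dots\ since the field has characteristic $\neq 2$'': what is used is that $S^2(V)=0$ forces $V=0$ (true in any characteristic) for every $\gamma$ with $2\gamma=\beta$, of which there may be several or none.
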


\begin{rem}
By the Pirashvili exact sequence (\cite{Pir}, see also the introduction), the cokernel $\Kill(\mkl)/\mathrm{Im}(\bar{\eta}_\mkl)_\beta$ can be identified with the kernel of the projection $H_1(\mkl,\mkl)_\beta\to H_2(\mkl)_\beta$. In particular, when $H_2(\mkl)_\beta=\{0\}$, the surjectivity of $\bar{\eta}_\mkl$ in degree $\beta$ is equivalent to the vanishing of $H_1(\mkl,\mkl)_\beta$.
\end{rem}

\begin{rem}\label{aa0hh1}
We have $A/A_0\neq\{0\}$ if and only if $\HH_1(A)\neq\{0\}$. Indeed, $A/A_0\subset\HH_1(A)$ so one direction is obvious. Conversely, if $\HH_1(A)\neq\{0\}$, then there exists a nonzero derivation $d$ on $A$ \cite[Proposition 1.1.10]{Lod}; this implies that, for $x\notin\Ker(d)$, the element $x\notin A_0$ (use that the map $a\we b\mapsto ad(b)-bd(a)$ vanishes on $T_0(A^{\ot 3})$ and maps $x\we 1$ to $-d(x)\neq 0$).
\end{rem}

\begin{cor}\label{h2van}
Suppose that the $K$-algebra $A$ is not reduced to $\{0\}$ or $K$. Fix a weight $\beta\in \mathcal{B}$. Then $H_2(A\ot\mkl)_\beta=\{0\}$ if and only if $S^2(H_1(\mkl))_\beta=H_2(\mkl)_\beta=\{0\}$ and
	\begin{itemize}
	\item $\HH_1(A)=\{0\}$ or
	\item $\Kill(\mkl)_\beta=\{0\}$ or
	\item $\bar{\eta}_\mkl$ is surjective in degree $\beta$ and $\HC_1(A)=\{0\}$.
	\end{itemize}
Also, the kernel of $\Ker(H_2(A\ot\mkl)_\beta\to A\ot H_2(\mkl)_\beta)$ is trivial exactly under the same condition, except that we drop the condition of vanishing of $H_2(\mkl)_\beta$.
\end{cor}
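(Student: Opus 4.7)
The plan is to deduce the corollary directly from the structural description in Corollary \ref{corh2}, viewing $H_2(A\otimes\mkl)_\beta$ as an iterated extension of the five degree-$\beta$ components
(i) $\HC_1(A)\otimes\Kill^{(3)}(\mkl)_\beta$,
(ii) $(A/A_0)\otimes(\Kill^{(3)}(\mkl)/\mathrm{Im}\,\bar{\eta}_\mkl)_\beta$,
(iii) $\Lambda^2 A\otimes S^2(H_1(\mkl))_\beta$,
(iv) $I_A\otimes \Lambda^2(H_1(\mkl))_\beta$, and
(v) $A\otimes H_2(\mkl)_\beta$.
Since an iterated extension of vector spaces over a field is zero if and only if each of its successive quotients is zero, the whole statement reduces to analyzing the independent vanishing of these five terms in degree $\beta$.

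For the necessity direction I would first use the hypothesis $A\neq K$, which gives $\dim_K A\geq 2$, so $A\neq 0$ and $\Lambda^2 A\neq 0$. From the vanishing of (v) and of (iii) one immediately extracts $H_2(\mkl)_\beta=0$ and $S^2(H_1(\mkl))_\beta=0$. The key small observation, which uses the characteristic $\neq 2$ hypothesis, is that vanishing of $S^2(H_1(\mkl))_\beta$ also forces $\Lambda^2(H_1(\mkl))_\beta=0$: decomposing by weights, the off-diagonal pieces are plain tensor products $H_1(\mkl)_\alpha\otimes H_1(\mkl)_\gamma$ common to both, while the diagonal piece is a symmetric square, whose vanishing in characteristic $\neq 2$ is equivalent to the vanishing of the module. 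This makes (iv) automatic, and also yields $\Kill^{(3)}(\mkl)_\beta=\Kill(\mkl)_\beta$. Then (i) and (ii) reduce to the two disjunctions ``$\HC_1(A)=0$ or $\Kill(\mkl)_\beta=0$'' and (using Remark \ref{aa0hh1} to rewrite $A/A_0=0$) ``$\HH_1(A)=0$ or $\bar{\eta}_\mkl$ is surjective in degree $\beta$''. A short case split on whether $\HH_1(A)$ vanishes produces exactly the three stated alternatives: if $\HH_1(A)=0$ we are in case one; otherwise $\bar{\eta}_\mkl$ must be surjective in degree $\beta$, and then either $\HC_1(A)=0$ (case three) or else $\Kill(\mkl)_\beta=0$ (case two).

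For the sufficiency direction I would instead invoke Corollary \ref{h10}, which applies because $S^2(H_1(\mkl))_\beta=0$; it expresses $H_2(A\otimes\mkl)_\beta$ as an iterated extension of only three pieces, $\HC_1(A)\otimes\Kill(\mkl)_\beta$, $(A/A_0)\otimes(\Kill(\mkl)/\mathrm{Im}\,\bar{\eta}_\mkl)_\beta$, and $A\otimes H_2(\mkl)_\beta$. The last vanishes by assumption, and each of the three listed alternatives kills the first two in turn: $\HH_1(A)=0$ forces both $\HC_1(A)=0$ (as a quotient) and $A/A_0=0$ (Remark \ref{aa0hh1}); $\Kill(\mkl)_\beta=0$ kills both terms directly; and the combination ``$\HC_1(A)=0$ plus $\bar{\eta}_\mkl$ surjective in degree $\beta$'' kills the first term by the former and the second by the latter.

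The final clause about the kernel of $H_2(A\otimes\mkl)_\beta\to A\otimes H_2(\mkl)_\beta$ is handled by literally the same analysis applied to the iterated extension of only (i)--(iv), thereby dropping the condition $H_2(\mkl)_\beta=0$ in the necessity direction and the use of $A\otimes H_2(\mkl)_\beta$ in the sufficiency direction. The delicate point of the argument, which I expect to be the main subtlety to verify, is keeping track of where the two standing hypotheses are actually used: the characteristic $\neq 2$ assumption appears exactly once, for the $S^2\Rightarrow\Lambda^2$ implication applied to $H_1(\mkl)$, and the hypothesis $A\neq K$ appears exactly once, to guarantee $\Lambda^2 A\neq 0$ so that vanishing of (iii) really does force $S^2(H_1(\mkl))_\beta=0$; everything else is a routine case analysis on the three alternatives.
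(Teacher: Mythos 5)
Your proposal is correct and follows essentially the same route as the paper: both arguments read off the vanishing of $H_2(A\ot\mkl)_\beta$ subfactor by subfactor from the iterated extensions of Corollaries \ref{corh2} and \ref{h10}, using $\Lambda^2A\neq\{0\}$, Remark \ref{aa0hh1}, and the observation that $S^2(H_1(\mkl))_\beta=\{0\}$ forces $\Lambda^2(H_1(\mkl))_\beta=\{0\}$ and $\Kill^{(3)}(\mkl)_\beta=\Kill(\mkl)_\beta$; the only (harmless) difference is organizational, plus the minor point that $S^2(W)=\{0\}\Rightarrow W=\{0\}$ needs no characteristic hypothesis.
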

\begin{proof}
If these spaces all vanish, observe that $\Lambda^2(H_1(\mkl))_\beta$ vanishes as well, and hence all subfactors in Corollary \ref{h10} vanish and and therefore $H_2(A\ot\mkl)=\{0\}$, and similarly if all but $H_2$ vanish, then the kernel vanishes.

Conversely, suppose that one of the conditions fails:
\begin{itemize}
\item Suppose $H_2(\mkl)_\beta\neq\{0\}$. Since $H_2(A\ot\mkl)_\beta\to A\ot H_2(\mkl)_\beta$ is surjective, it follows that $H_2(A\ot\mkl)_\beta\neq\{0\}$. 
\item Suppose $S^2(H_1(\mkl))_\beta\neq\{0\}$. By the assumption on $A$, we have $\Lambda^2A\neq\{0\}$, and hence the third term in Corollary \ref{corh2} is nonzero.
\item Suppose that all three additional conditions fail and $S^2(H_1(\mkl))_\beta=\{0\}$. We have two cases:
\begin{itemize}
\item $\HH_1(A)\neq\{0\}$, $\Kill(\mkl)_\beta\neq\{0\}$, and $\bar{\eta}_\mkl$ is not surjective in degree $\beta$. Then by Remark \ref{aa0hh1}, $A/A_0$ is nonzero. Hence the second subfactor in Corollary \ref{corh2} is nonzero.
\item $\HC_1(A)\neq\{0\}$, $\Kill(\mkl)_\beta\neq\{0\}$. Then the first subfactor in Corollary \ref{corh2} is nonzero. 
\end{itemize}
\end{itemize}
\end{proof}

\begin{rem}
Under the assumptions of Corollary \ref{h2van}, we have $I_A\neq\{0\}$, which implies that the fourth term in Corollary \ref{corh2} is also nonzero in case $\Lambda^2(H_1(\mkl))_\beta\neq\{0\}$. Indeed, pick $x\in A\smallsetminus K 1_A$, which exists by assumption. If the $K$-algebra $K'$ generated by $x$ is finite-dimensional as a vector space over $K$, then the multiplication map $S^2(K')\to K'$ cannot be injective by a dimension argument. Otherwise, $x$ is transcendent and then $x^2\cc x^2-x\cc x^3$ is a nonzero element in $I_A$.
\end{rem}

Also, using the last statement of Lemma \ref{linalg}, we have:

\begin{cor}\label{coucoc}
The space of 2-boundaries $B_2(A\ot\mkl)_\beta$ splits according to the decomposition of Proposition \ref{candeco} if and only if $\mathrm{Im}(\bar{\eta}_\mkl)_\beta=\Kill^{(3)}(\mkl)_\beta$. 
\end{cor}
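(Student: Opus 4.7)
The approach is to apply the last assertion of Lemma \ref{linalg} to the map $u=(u_1,u_2):M\to E_1\oplus E_2$ constructed just before Theorem \ref{nwfr}, and to combine this with the explicit identification of $M/(\Ker(u_1)+\Ker(u_2))$ carried out in the paragraphs leading up to Corollary \ref{corh2}. The only new content is a bookkeeping argument that translates the splitting of $B_2(A\ot\mkl)_\beta$ in the three-term decomposition of Proposition \ref{candeco} into the splitting of $\mathrm{Im}(u)_\beta$ in $E_1\oplus E_2$.

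First I would observe, from the description in Theorem \ref{NWt}, that $W_1,W'_1\subset V_1$ and $W_3\subset V_3$, while $W_{12}\subset V_1\oplus V_2$. In particular the projection of $B_2(A\ot\mkl)$ onto $V_3$ lands in $W_3$, so $B_2(A\ot\mkl)\cap V_3=W_3$ and the $V_3$-summand always splits off. The splitting question therefore reduces to whether $W:=W_1+W'_1+W_{12}$ decomposes in degree $\beta$ as $(W\cap V_1)+(W\cap V_2)$ inside $V_1\oplus V_2$.

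Second, since $W_1\subset V_1$ is precisely the kernel of the projection $V_1\oplus V_2\twoheadrightarrow E_1\oplus E_2$, a splitting of $W$ in $V_1\oplus V_2$ descends to a splitting of its image $\mathrm{Im}(u)=W/W_1$ in $E_1\oplus E_2$, while conversely any splitting of $\mathrm{Im}(u)$ lifts back by adjoining $W_1$ to the $E_1$-summand. Thus $B_2(A\ot\mkl)_\beta$ splits if and only if $\mathrm{Im}(u)_\beta$ splits in $(E_1\oplus E_2)_\beta$, and by the third statement of Lemma \ref{linalg} (applied in degree $\beta$), this is equivalent to $M_\beta=\Ker(u_1)_\beta+\Ker(u_2)_\beta$.

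Third, I would invoke the canonical identification
\[ M/\bigl(\Ker(u_1)+\Ker(u_2)\bigr)\;\simeq\;(A/A_0)\otimes\bigl(\Kill^{(3)}(\mkl)/\mathrm{Im}(\bar{\eta}_\mkl)\bigr) \]
derived in the paragraphs preceding Corollary \ref{corh2}, which was obtained by computing $\Ker(u_2)$ explicitly in the decomposition $M=(A\oplus A^{\ot 3})\ot\Lambda^3\mkl$, noting that the $A^{\ot 3}$-component of the quotient vanishes, and then using the isomorphism $\Lambda^3\mkl/(Z_3(\mkl)+\Ker\,\eta)\simeq\Kill^{(3)}(\mkl)/\mathrm{Im}(\bar{\eta}_\mkl)$ supplied by the restriction of $\eta$. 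Taking the degree-$\beta$ part (the $A$-factor being concentrated in degree $0$ for the given grading), and using Remark \ref{aa0hh1} together with the implicit nondegeneracy assumption $\HH_1(A)\neq 0$ guaranteeing $A/A_0\neq 0$, the vanishing of this tensor product is equivalent to $\mathrm{Im}(\bar{\eta}_\mkl)_\beta=\Kill^{(3)}(\mkl)_\beta$, as desired. The main obstacle is really just keeping the bookkeeping honest in the second step; all the genuine input (Theorem \ref{NWt} and the explicit kernel computation) is already in place.
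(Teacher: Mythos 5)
Your proof is correct and follows exactly the route the paper intends: the paper derives this corollary in one line from the last statement of Lemma \ref{linalg} together with the identification $M/(\Ker(u_1)+\Ker(u_2))\simeq (A/A_0)\ot\bigl(\Kill^{(3)}(\mkl)/\mathrm{Im}(\bar{\eta}_\mkl)\bigr)$, and your second step merely makes explicit the (correct) bookkeeping that the splitting of $B_2(A\ot\mkl)_\beta$ in $V_1\oplus V_2\oplus V_3$ is equivalent to the splitting of $\mathrm{Im}(u)_\beta$ in $E_1\oplus E_2$. You are also right to flag that the ``only if'' direction needs $\HH_1(A)\neq\{0\}$ (so that $A/A_0\neq\{0\}$ by Remark \ref{aa0hh1}), a hypothesis the paper leaves implicit in the statement.
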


The above splitting property is called ``to have no nonzero coupled 2-cocycles" in \cite{NW}. (Corollary \ref{coucoc} is essentially equivalent to Theorem 4.7 in \cite{NW}.)

Notwithstanding the semisimple case, the condition that $\bar{\eta}$ is zero is very frequent (especially at a fixed weight of a given grading since, for $K$ of characteristic zero, it holds notably when $\beta$ is not a torsion element in $\mathcal{B}$, by Theorem \ref{vanish}). Therefore let us make the corollaries explicit in this case:

We begin by Corollary \ref{corh2}: then we can glue the first two terms.

\begin{cor}\label{iterat}
Fix a weight $\beta\in \mathcal{B}$ and suppose that $\bar{\eta}$ is zero in degree $\beta$. Then the kernel $\mathcal{K}(A,\mkl)_\beta$ of the surjective map $H_2(A\ot\mkl)_\beta\to A\ot H_2(\mkl)_\beta$ lies in an exact sequence
\[0\to T_0(A^{\ot 3})\ot S^2(H_1(\mkl))_\beta\to \mathcal{K}(A,\mkl)_\beta\qquad\qquad\qquad\qquad\qquad\qquad\qquad\]
\[\qquad\qquad\to \big(\HH_1(A)\ot\Kill(\mkl)_\beta\big)\oplus \big(I_A\ot \Lambda^2(H_1(\mkl))_\beta\big)\to 0;\]
\end{cor}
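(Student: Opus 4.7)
The strategy is to reopen the Neeb--Wagemann analysis underlying Corollary \ref{corh2}, rather than to shuffle the list of subquotients abstractly, so that the vanishing of $\bar{\eta}_\mkl$ in degree $\beta$ can be used to reorganize the filtration. The starting point is Theorem \ref{nwfr}, which exhibits $H_2(A\ot\mkl)$ as $((E_1\oplus E_2)/\mathrm{Im}(u)) \oplus (I_A\ot\Lambda^2 H_1(\mkl))$. The second summand lies entirely in the kernel $\mathcal{K}(A,\mkl)$ of the projection to $A\ot H_2(\mkl)$, while the first maps surjectively onto $\coker(u_2)=A\ot H_2(\mkl)$ via the $E_2$-coordinate; so it suffices to describe the kernel $\mathcal{K}_0$ of $(E_1\oplus E_2)/\mathrm{Im}(u) \to \coker(u_2)$, after which $I_A\ot\Lambda^2 H_1(\mkl)_\beta$ will just be glued back as a direct summand to produce the right-hand term of the sought exact sequence.

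For $\mathcal{K}_0$ I would apply the diagram chase of Lemma \ref{linalg}: any class $[(e_1,e_2)]$ in the kernel has $e_2\in\mathrm{Im}(u_2)$, so after subtracting a suitable element of $\mathrm{Im}(u)$ its representative can be taken of the form $(e_1',0)$, canonically identifying $\mathcal{K}_0$ with $E_1/u_1(\Ker u_2)$. Next I compute $u_1(\Ker u_2)$ by plugging in the explicit formulas for $v_1$, $v'_1$ and $u_2$. Since $\Ker u_2 = A\ot Z_3(\mkl) \oplus A^{\ot 3}\ot \Lambda^3\mkl$, a short calculation gives
\[u_1(\Ker u_2) \;=\; (A\we 1)\ot\mathrm{Im}(\bar{\eta}_\mkl) \;+\; T_0(A^{\ot 3})\ot\Kill^{(3)}(\mkl).\]
At weight $\beta$, the first summand collapses by hypothesis, so $\mathcal{K}_0(\beta) = (\Lambda^2 A\ot\Kill(\mkl)_\beta)/(T_0(A^{\ot 3})\ot\Kill^{(3)}(\mkl)_\beta)$.

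To read off the exact sequence I would then filter this quotient by the $A$-factor rather than by the $\mkl$-factor: tensoring $0\to T_0(A^{\ot 3})\to \Lambda^2 A\to \HH_1(A)\to 0$ with $\Kill(\mkl)_\beta$ and quotienting the leftmost term by $T_0(A^{\ot 3})\ot\Kill^{(3)}(\mkl)_\beta$ yields
\[0 \to T_0(A^{\ot 3})\ot\bigl(\Kill(\mkl)/\Kill^{(3)}(\mkl)\bigr)_\beta \to \mathcal{K}_0(\beta) \to \HH_1(A)\ot\Kill(\mkl)_\beta \to 0,\]
and identifying $\Kill(\mkl)/\Kill^{(3)}(\mkl)$ with $S^2(H_1(\mkl))$ via Section \ref{fiki} produces the desired subspace $T_0(A^{\ot 3})\ot S^2(H_1(\mkl))_\beta$. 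Splicing in the direct summand $I_A\ot\Lambda^2 H_1(\mkl)_\beta$ then completes the proof.

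The only genuine content beyond bookkeeping is the explicit identification of $u_1(\Ker u_2)$: one has to match the definitions of $v_1$ and $v'_1$ and notice that their combined image is always $T(A^{\ot 3})\ot\Kill^{(3)}(\mkl)$ (using $T(a\ot 1\ot 1)=a\we 1$), which under the hypothesis $\bar{\eta}_\mkl|_\beta=0$ shrinks in degree $\beta$ to the smaller $T_0(A^{\ot 3})\ot\Kill^{(3)}(\mkl)_\beta$ because the ``Hochschild correction term'' $(A\we 1)\ot\mathrm{Im}(\bar{\eta}_\mkl)_\beta$ disappears. The subtle point is simply to choose the $A$-filtration rather than the $\mkl$-filtration of $\mathcal{K}_0(\beta)$: the two give the two mutually dual exact sequences of Corollary \ref{h10}, and only the former packages the extra degrees of freedom provided by $S^2(H_1(\mkl))_\beta$ into the compact formulation stated here.
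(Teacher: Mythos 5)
Your proof is correct and runs along the same lines as the paper's: the paper's own (very terse) justification is to regroup the subquotients of Corollary \ref{corh2}, which were themselves obtained from Theorem \ref{nwfr} and Lemma \ref{linalg} — exactly the ingredients you use. Your key computation $u_1(\Ker u_2)=(A\we 1)\ot\mathrm{Im}(\bar{\eta}_\mkl)+T_0(A^{\ot 3})\ot\Kill^{(3)}(\mkl)$ is right, and identifying $\mathcal{K}_0$ with $E_1/u_1(\Ker u_2)$ in a single step and then filtering by the $A$-factor is a clean way of making explicit the gluing that the paper merely asserts.
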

 
\begin{cor}\label{h10eta0}
Fix a weight $\beta\in \mathcal{B}$ and assume that $S^2(H_1(\mkl))_\beta=\{0\}$ and that $\bar{\eta}$ is zero in degree $\beta$. Then  
the space $H_2(A\ot\mkl)_\beta$ lies in an exact sequence
\[0\to \HH_1(A)\ot\Kill(\mkl)_\beta \to H_2(A\ot\mkl)_\beta\to A\ot H_2(\mkl)_\beta\to 0.\]
\end{cor}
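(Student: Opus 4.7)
The plan is to derive the statement as an immediate specialization of Corollary~\ref{iterat}: the additional hypothesis $S^2(H_1(\mkl))_\beta=\{0\}$ will kill all the terms in the sequence there that involve $H_1(\mkl)$, leaving only the $\HH_1(A)\ot\Kill(\mkl)_\beta$ contribution to $\mathcal{K}(A,\mkl)_\beta$.

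The only preliminary observation needed is that, in characteristic $\neq 2$, the vanishing $S^2(H_1(\mkl))_\beta=\{0\}$ already implies $\Lambda^2(H_1(\mkl))_\beta=\{0\}$. This is elementary and graded-pointwise: decompose $(S^2 V)_\beta$ and $(\Lambda^2 V)_\beta$ (for $V=H_1(\mkl)$) according to unordered pairs $\{\alpha,\gamma\}$ of weights with $\alpha+\gamma=\beta$. For $\alpha\neq\gamma$, both the symmetric and alternating contributions are canonically isomorphic to $V_\alpha\otimes V_\gamma$; and for each $\delta$ with $2\delta=\beta$, $(S^2V)_\beta$ contains the direct summand $S^2(V_\delta)$ while $(\Lambda^2 V)_\beta$ contains $\Lambda^2(V_\delta)$. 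Vanishing of $(S^2V)_\beta$ therefore forces $V_\alpha\otimes V_\gamma=\{0\}$ for every such pair and $V_\delta=\{0\}$ for every $\delta$ with $2\delta=\beta$; whence $(\Lambda^2 V)_\beta=\{0\}$ as well.

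With this in hand, Corollary~\ref{iterat} applied in degree $\beta$ (legitimate since $\bar\eta_\mkl$ is assumed to vanish in degree $\beta$) specializes to an isomorphism
\[\mathcal{K}(A,\mkl)_\beta \;\simeq\; \HH_1(A)\ot\Kill(\mkl)_\beta,\]
because both its leftmost term $T_0(A^{\ot 3})\ot S^2(H_1(\mkl))_\beta$ and the summand $I_A\ot\Lambda^2(H_1(\mkl))_\beta$ on the right vanish. Combining this isomorphism with the defining short exact sequence
\[0 \;\to\; \mathcal{K}(A,\mkl)_\beta \;\to\; H_2(A\ot\mkl)_\beta \;\to\; A\ot H_2(\mkl)_\beta \;\to\; 0\]
(whose right-hand surjectivity is the degree-$\beta$ part of the surjectivity statement in Corollary~\ref{corh2}) yields the announced exact sequence. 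There is no serious obstacle; the proof is just routine bookkeeping once Corollary~\ref{iterat} is available, the only non-vacuous point being the elementary lemma on vanishing of $\Lambda^2$ in graded degree~$\beta$.
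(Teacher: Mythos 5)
Your proof is correct. The paper states Corollary \ref{h10eta0} without proof, as an immediate specialization of the two corollaries preceding it, and your derivation via Corollary \ref{iterat} is one of the two equally natural routes. The other is to specialize the second exact sequence of Corollary \ref{h10}: with $\bar{\eta}_\mkl=0$ in degree $\beta$ the subobject $\HC_1(A)\ot\mathrm{Im}(\bar{\eta}_\mkl)_\beta$ vanishes and $\coker(\bar{\eta}_\mkl)_\beta=\Kill^{(3)}(\mkl)_\beta=\Kill(\mkl)_\beta$ (the last equality because $\Kill(\mkl)/\Kill^{(3)}(\mkl)\simeq S^2(H_1(\mkl))$ vanishes in degree $\beta$), giving $\mathcal{K}(A,\mkl)_\beta\simeq\HH_1(A)\ot\Kill(\mkl)_\beta$ directly. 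That route avoids the one genuinely non-vacuous ingredient of yours, namely that $S^2(H_1(\mkl))_\beta=\{0\}$ forces $\Lambda^2(H_1(\mkl))_\beta=\{0\}$; your weight-by-weight proof of that implication is correct (the paper makes the same observation, without proof, inside the proof of Corollary \ref{h2van}), and it is needed in your approach to kill the summand $I_A\ot\Lambda^2(H_1(\mkl))_\beta$. Either way the conclusion follows by splicing the identification of $\mathcal{K}(A,\mkl)_\beta$ into the sequence $0\to\mathcal{K}(A,\mkl)_\beta\to H_2(A\ot\mkl)_\beta\to A\ot H_2(\mkl)_\beta\to 0$, exactly as you do.
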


Let us give a corollary of Theorem \ref{nwfr} in an even more particular case (which in a sense is the less interesting one, but ought to be written because it is often applicable), namely when $\Kill^{(3)}(\mkl)_\beta=\{0\}$. 

\begin{cor}\label{killess0}
Suppose that $\Kill^{(3)}(\mkl)_\beta=\{0\}$. Then we have an isomorphism
\[H_2(A\ot\mkl)_\beta\simeq\big(\Lambda^2(A)\ot S^2(H_1(\mkl))_\beta\big)\oplus \big(A\ot H_2(\mkl)_\beta\big)\oplus \big(I_A\ot \Lambda^2(H_1(\mkl))_\beta\big).\]
\end{cor}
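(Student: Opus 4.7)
The plan is to apply Theorem \ref{nwfr} directly, which already expresses
\[H_2(A\ot\mkl)\;\simeq\; \big((E_1\oplus E_2)/\mathrm{Im}(u)\big)\;\oplus\; \big(I_A\ot \Lambda^2 H_1(\mkl)\big),\]
so the third summand of the asserted decomposition is automatic. The remaining work is to show that, in degree $\beta$, the quotient $(E_1\oplus E_2)/\mathrm{Im}(u)$ splits canonically as $\Lambda^2(A)\ot S^2(H_1(\mkl))_\beta \;\oplus\; A\ot H_2(\mkl)_\beta$.

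The key observation will be that the hypothesis $\Kill^{(3)}(\mkl)_\beta=0$ forces the component $u_1=v_1\oplus v'_1$ to vanish in degree $\beta$. Indeed, $v_1$ and $v'_1$ both factor through $\eta$ acting on the $\Lambda^3\mkl$-factor, whose image in $\Kill(\mkl)$ is by definition $\Kill^{(3)}(\mkl)$. Since the grading on $\Lambda^2(A)\ot\Kill(\mkl)$ and on $T_0(A^{\ot 3})\ot\Kill(\mkl)$ is entirely carried by $\mkl$ (the factors coming from $A$ live in degree $0$), the images of $v_1$ and $v'_1$ in degree $\beta$ lie respectively in $\Lambda^2(A)\ot\Kill^{(3)}(\mkl)_\beta$ and $T_0(A^{\ot 3})\ot\Kill^{(3)}(\mkl)_\beta$, which both vanish by hypothesis.

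Next I would use the exact sequence $0\to\Kill^{(3)}(\mkl)\to\Kill(\mkl)\to S^2(H_1(\mkl))\to 0$ to identify $\Kill(\mkl)_\beta$ with $S^2(H_1(\mkl))_\beta$, so that $(E_1)_\beta = \Lambda^2(A)\ot S^2(H_1(\mkl))_\beta$. On the other hand, $u_2$ acts as $\mathrm{id}_A\ot\bd_3$ on $A\ot\Lambda^3\mkl$ and trivially on the second summand of $M$, so $\mathrm{Im}(u_2)=A\ot B_2(\mkl)$ and $E_2/\mathrm{Im}(u_2)\simeq A\ot H_2(\mkl)$. Since $u_1$ is zero in degree $\beta$, the image of $u$ in degree $\beta$ equals $\{0\}\oplus \mathrm{Im}(u_2)_\beta$, so the quotient $(E_1\oplus E_2)/\mathrm{Im}(u)$ in degree $\beta$ splits cleanly into the two claimed factors.

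I do not anticipate a genuine obstacle; this is essentially an exercise in bookkeeping with the maps introduced before Theorem \ref{nwfr}. The only point requiring care is verifying that the images of $v_1$ and $v'_1$ really sit inside $\Lambda^2(A)\ot\Kill^{(3)}(\mkl)$ (and not merely $\Lambda^2(A)\ot\Kill(\mkl)$) so that the hypothesis can be brought to bear; this is immediate from the factorization through $\eta:\Lambda^3\mkl\to\Kill^{(3)}(\mkl)$.
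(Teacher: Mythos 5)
Your proposal is correct and follows the same route as the paper: the paper's (very terse) proof likewise observes that $\Kill^{(3)}(\mkl)_\beta=\{0\}$ forces the Koszul map, hence $u_1$, to vanish in degree $\beta$ (so $\mathrm{Im}(u)_\beta=\mathrm{Im}(u_2)_\beta$ and the quotient in Theorem \ref{nwfr} splits), and that $\Kill(\mkl)_\beta=S^2(H_1(\mkl))_\beta$. Your write-up just makes explicit the bookkeeping that the paper dismisses with ``the result immediately follows.''
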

\begin{proof}
The assumption means that the Koszul map (not only the reduced one) vanishes in degree $\beta$, and also means that $\Kill(\mkl)_\beta=S^2(H_1(\mkl))_\beta$. The result immediately follows.
\end{proof}

\begin{ex}A simple example is the 2-dimensional non-abelian Lie algebra $\mkl$. This Lie algebra is convenient to be seen with a grading in $\{0,1\}$. Then both its 1-homology and its Killing module are concentrated in degree zero; its second and third homology vanish. By Corollary \ref{h2van}, it follows that $H_2(A\ot\mkl)_n=0$ for $n=1,2$ regardless of $A$. In degree zero, we just obtain the same as in the case of an abelian Lie algebra, namely $\Lambda^2(A\ot\mkl_0)\simeq\Lambda^2(A)$. Hence
\[H_2(A\ot\mkl)=H_2(A\ot\mkl)_0\simeq\Lambda^2(A),\]
where an isomorphism $\Lambda^2(A)\to H_2(A\ot\mkl)$ is given by the map $\lambda\we\mu\mapsto \lambda x\we \mu x$, where $x$ is a fixed nonzero element of $\mkl_0$.
\end{ex}

\begin{ex}
Let $\mkl$ be a Carnot-graded Lie algebra. Then $S^2(H_1(\mkl))$ is concentrated in degree 2. Thus for $n\ge 3$, by Corollary \ref{iterat} (and Corollary \ref{nstar}), we have an exact sequence
\[0\to \HH_1(A)\ot\Kill(\mkl)_n\to H_2(A\ot\mkl)_n\to A\ot H_2(\mkl)_n\to 0,\]
the left hand homomorphism being induced by $(\lambda\we\mu)\ot (x\cc y)\mapsto \lambda x\we \mu y-\mu x\we \lambda y$.
(The vanishing of $H_2(\mkl)_n$ for all $n\ge 3$ characterizes when $\mkl$ is quadratically presented.) In degree 2, Corollary \ref{killess0} provides an isomorphism
\[H_2(A\ot\mkl)_2\simeq\big(\Lambda^2(A)\ot S^2(\mkl_1)\big)\oplus \big(A\ot H_2(\mkl)_2\big)\oplus\big( I_A\ot\Lambda^2(\mkl_1)\big).\]
\end{ex}

\begin{ex}\label{scoadj}
Let $\mk{s}$ be a simple complex Lie algebra and $\mkl=\mk{s}\ltimes\mk{s}^*$ the coadjoint semidirect product. Then $H_1(\mkl)=H_2(\mkl)=0$ (see the proof of Lemma \ref{H20ver}). Consider the grading of $\mkl$ in $\{0,1\}$, with $\mk{s}$ of degree 0 and the abelian ideal $\mk{s}^*$ of degree 1. Then $\Kill(\mkl)$ is 2-dimensional, namely 1-dimensional in both degree 0 and degree 1 (and zero in degree 2, by a straightforward argument), and $\bar{\eta}$ is surjective in degree 0, and is zero in degree 1 (the former by Koszul's theorem, and the latter by an easy instance of Theorem \ref{H20ver}). Therefore, by Corollary \ref{h10}, we have 
\[H_2(A\ot\mkl)_0\simeq\HC_1(A),\]
and by Corollary \ref{h10eta0}, we obtain
\[H_2(A\ot\mkl)_1\simeq\HH_1(A),\quad H_2(A\ot\mkl)_2=0.\]
\end{ex}


\end{document}